\def\showauthornotes{2}
\def\showkeys{0}
\def\showdraftbox{1}
\def\showcolorlinks{1}
\def\usemicrotype{1}
\def\showfixme{1}
\def\arxivmode{0}
\def\fastmode{0}
\newcommand{\llangle}{\left\langle}
\newcommand{\rrangle}{\right\rangle}
\newtheorem{theorem}{Theorem}[section]
\newtheorem*{theorem*}{Theorem}
\newtheorem*{proposition*}{Proposition}
\newtheorem{lemma}[theorem]{Lemma}
\newtheorem*{lemma*}{Lemma}
\newtheorem{corollary}[theorem]{Corollary}
\newtheorem*{conjecture*}{Conjecture}
\newtheorem*{fact*}{Fact}
\newtheorem*{exercise*}{Exercise}
\newtheorem*{hypothesis*}{Hypothesis}
\theoremstyle{definition}
\newtheorem{definition}[theorem]{Definition}
\newtheorem{exercise-easy}[theorem]{Exercise}
\newtheorem{exercise-med}[theorem]{Exercise}
\newtheorem{exercise-hard}[theorem]{Exercise$^\star$}
\newtheorem{claim}[theorem]{Claim}
\newtheorem*{claim*}{Claim}
\newtheorem{remark}[theorem]{Remark}
\newtheorem*{remark*}{Remark}
\newtheorem{observation}[theorem]{Observation}
\newtheorem*{observation*}{Observation}
\let\mathbb\varmathbb
\definecolor{bleudefrance}{rgb}{0.01, 0.1, 1.0}
\definecolor{azure}{rgb}{0.0, 0.5, 1.0}
\newcommand{\savehyperref}[2]{\texorpdfstring{\hyperref[#1]{#2}}{#2}}
\newcommand{\Sref}[1]{\hyperref[#1]{\S\ref*{#1}}}
\newcommand{\mynotes}[1]{{\sffamily\small\color{teal}{#1}}\medskip}
\newcommand{\Authornote}[2]{{\sffamily\small\color{Maroon}{[#1: #2]}}\medskip}
\newcommand{\Authornotecolored}[3]{{\sffamily\small\color{#1}{[#2: #3]}}}
\newcommand{\Authorcomment}[2]{{\sffamily\small\color{gray}{[#1: #2]}}}
\newcommand{\Authorstartcomment}[1]{\sffamily\small\color{gray}[#1: }
\newcommand{\Authorfnote}[2]{\footnote{\color{red}{#1: #2}}}
\newcommand{\Authorfixme}[1]{\Authornote{#1}{\textbf{??}}}
\newcommand{\Authormarginmark}[1]{\marginpar{\textcolor{red}{\fbox{\Large #1:!}}}}
\newcommand{\myexplain}[1]{{\sffamily\small\color{red}{\noindent [Explanation:\medskip\newline \begin{quote}#1\hfill]\end{quote}}}\medskip}
\newcommand{\explain}[1]{{\sffamily\small\color{red}{#1}}\medskip}
\newcommand{\mynotes}[1]{}
\newcommand{\Authornote}[2]{}
\newcommand{\Authornotecolored}[3]{}
\newcommand{\Authorcomment}[2]{}
\newcommand{\Authorstartcomment}[1]{}
\newcommand{\Authorfnote}[2]{}
\newcommand{\Authorfixme}[1]{}
\newcommand{\Authormarginmark}[1]{}
\newcommand{\myexplain}[1]{}
\newcommand{\explain}[1]{}
\renewcommand{\myexplain}[1]{{\sffamily\small\color{red}{\noindent \begin{quote}{\bf Explanation:} \medskip\newline #1\end{quote}}}\medskip}
\newcommand{\Esymb}{\mathbb{E}}
\newcommand{\Psymb}{\mathbb{P}}
\DeclareMathOperator*{\E}{\Esymb}
\DeclareMathOperator*{\ProbOp}{\Psymb}
\renewcommand{\Pr}{\ProbOp}
\newcommand{\textparen}[1]{\text{(#1)}}
\newcommand{\because}[1]{\textparen{because #1}}
\renewcommand{\because}[1]{\textparen{because #1}}
\newcommand{\seteq}{\mathrel{\mathop:}=}
\newcommand\bdot\bullet
\DeclareMathOperator{\supp}{supp}
\newcommand{\N}{\mathbb N}
\newcommand{\R}{\mathbb R}
\newcommand{\cC}{\mathcal C}
\newcommand{\cD}{\mathcal D}
\newcommand{\cE}{\mathcal E}
\newcommand{\cF}{\mathcal F}
\newcommand{\cG}{\mathcal G}
\newcommand{\cH}{\mathcal H}
\newcommand{\cL}{\mathcal L}
\newcommand{\cR}{\mathcal R}
\newcommand{\cT}{\mathcal T}
\newcommand{\bA}{\bm A}
\newcommand{\bH}{\bm H}
\newcommand{\bI}{\bm I}
\newcommand{\bS}{\bm S}
\newcommand{\bT}{\bm T}
\newcommand{\bU}{\bm U}
\newcommand{\scrL}{\mathscr L}
\newcommand{\scrR}{\mathscr R}
\newcommand{\bbS}{\mathbb S}
\renewcommand{\leq}{\leqslant}
\renewcommand{\geq}{\geqslant}
\let\epsilon=\varepsilon
\numberwithin{equation}{section}
\newcommand\MYcurrentlabel{xxx}
\newcommand{\MYstore}[2]{%
  \global\expandafter \def \csname MYMEMORY #1 \endcsname{#2}%
}
\newcommand{\MYload}[1]{%
  \csname MYMEMORY #1 \endcsname%
}
\newcommand{\MYnewlabel}[1]{%
  \renewcommand\MYcurrentlabel{#1}%
  \MYoldlabel{#1}%
}
\newcommand{\MYdummylabel}[1]{}
\newcommand{\torestate}[1]{%
  \let\MYoldlabel\label%
  \let\label\MYnewlabel%
  #1%
  \MYstore{\MYcurrentlabel}{#1}%
  \let\label\MYoldlabel%
}
\newcommand{\restatetheorem}[1]{%
  \let\MYoldlabel\label
  \let\label\MYdummylabel
  \begin{theorem*}[Restatement of \prettyref{#1}]
    \MYload{#1}
  \end{theorem*}
  \let\label\MYoldlabel
}
\newcommand{\restatelemma}[1]{%
  \let\MYoldlabel\label
  \let\label\MYdummylabel
  \begin{lemma*}[Restatement of \prettyref{#1}]
    \MYload{#1}
  \end{lemma*}
  \let\label\MYoldlabel
}
\newcommand{\restateprop}[1]{%
  \let\MYoldlabel\label
  \let\label\MYdummylabel
  \begin{proposition*}[Restatement of \prettyref{#1}]
    \MYload{#1}
  \end{proposition*}
  \let\label\MYoldlabel
}
\newcommand{\restatefact}[1]{%
  \let\MYoldlabel\label
  \let\label\MYdummylabel
  \begin{fact*}[Restatement of \prettyref{#1}]
    \MYload{#1}
  \end{fact*}
  \let\label\MYoldlabel
}
\newcommand{\restate}[1]{%
  \let\MYoldlabel\label
  \let\label\MYdummylabel
  \MYload{#1}
  \let\label\MYoldlabel
}
\newcommand{\addreferencesection}{
  \phantomsection
\ifnum\stocmode=0
  \addcontentsline{toc}{section}{References}
\else
  \addcontentsline{toc}{section}{References \hspace*{1in} --------- End of extended abstract ---------}
\fi

}
\newcommand{\e}{\epsilon}
\newcommand{\eps}{\epsilon}
\let\origparagraph\paragraph
\renewcommand{\paragraph}[1]{\vspace*{-3pt}\origparagraph{#1.}}
\let\pref=\prettyref
\newcommand{\diam}{\mathrm{diam}}
\newcommand{\vertiii}[1]{{\left\vert\kern-0.25ex\left\vert\kern-0.25ex\left\vert #1 
          \right\vert\kern-0.25ex\right\vert\kern-0.25ex\right\vert}}
\newcommand{\cmnt}[1]{}
\newcommand{\reff}{\mathsf{R}_{\mathrm{eff}}}
\newcommand{\region}[1]{\left\llbracket #1\right\rrbracket}
\newcommand{\1}{\mathbb{1}}
\newcommand{\todl}{\Rightarrow}
\renewcommand{\mathbb}{\vvmathbb}
\newcommand{\dloc}{\vvmathbb{d}_{\mathrm{loc}}}
\newcommand{\graphs}{\cG}
\newcommand{\rgraphs}{\cG_{\bullet}}
\newcommand{\lra}{\leftrightarrow}
\newcommand{\tileprod}{\circ}
\newcommand{\tilecat}{\mid}
\title{On planar graphs of uniform polynomial growth}
\author{Farzam Ebrahimnejad\thanks{\texttt{febrahim@cs.washington.edu}}\hspace{0.3in}  James R. Lee\thanks{\texttt{jrl@cs.washington.edu}} 
   \\ {\small Paul G. Allen School of Computer Science \& Engineering} \\ {\small University of Washington}}
\date{}
\begin{document}

\maketitle

\begin{abstract}
   Consider an infinite planar graph with uniform polynomial growth
   of degree $d > 2$.  Many examples of such graphs exhibit
   similar geometric and spectral properties, and it has been conjectured that this is necessary.
   We present a family of counterexamples.
   In particular, we show that for every rational $d > 2$, there is a planar graph with uniform polynomial
   growth of degree $d$
   on which the random walk is transient, disproving a conjecture of Benjamini (2011).

   By a well-known theorem of Benjamini and Schramm, such a graph cannot be a unimodular random graph.
   We also give examples of unimodular random planar graphs of uniform polynomial growth
   with unexpected properties.  For instance, graphs of (almost sure) uniform polynomial growth of every rational
   degree $d > 2$ for which the speed exponent of the walk is larger than $1/d$,
   and in which the complements of all balls are connected.
   This resolves negatively two questions of Benjamini and Papasoglou (2011).
\end{abstract}

\begingroup
\hypersetup{linktocpage=false}
\setcounter{tocdepth}{2}
\tableofcontents
\endgroup

\newpage

\section{Introduction}

Say that a graph $G$ has {\em uniform polynomial growth of degree $d$} if the cardinality
of all balls of radius $r$ in the graph metric lie between $c r^d$ and $C r^d$ for
two absolute constants $C > c > 0$, for every $r > 0$.
Say that a graph has {\em nearly-uniform polynomial growth of degree $d$}
if the cardinality of balls is trapped between $(\log r)^{-C} r^d$ and $(\log r)^C r^d$
for some universal constant $C \geq 1$.

Planar graphs of uniform (or nearly-uniform) polynomial volume growth of degree $d > 2$ arise
in a number of contexts.  In particular, they appear in the study of random triangulations in 2D quantum gravity \cite{adj97} and
as combinatorial approximations to the boundaries of $3$-dimensional hyperbolic groups in geometric group theory (see, e.g., \cite{bk02}).

When the dimension of volume growth disagrees with the topological dimension, one sometimes witnesses
certain geometrically or spectrally degenerate behaviors.  For instance, it is known 
that random planar triangulations of the $2$-sphere have nearly-uniform polynomial
volume growth of degree $4$ (in an appropriate statistical, asymptotic sense) \cite{angel03}.
The distributional limit (see \pref{sec:graph-limits}) of such graphs is called the uniform
infinite planar triangulation (UIPT).
But this $4$-dimensional volume growth does not come
with $4$-dimensional isoperimetry:  With high probability, a ball in the UIPT of radius $r$ about a vertex $v$
can be separated from the complement of a $2r$ ball about $v$ by removing a set of size $O(r)$.
And, indeed, Benjamini and Papasoglu \cite{BP11} showed that this phenomenon holds generally:
such annular separators of size $O(r)$ exist in all planar graphs with uniform polynomial volume growth.

Similarly, it is known that diffusion on the UIPT is {\em anomalous.}
Specifically, the random walk on the UIPT is almost surely subdiffusive.
In other words, if $\{X_t\}$ is the random walk and $d_G$ denotes the graph metric,
then $\E d_G(X_0,X_t) \leq t^{1/2-\eps}$ for some $\eps > 0$.
This was established by Benjamini and Curien \cite{bc13}.  In \cite{lee17a}, it is shown that
on {\em any} unimodular random planar graph with nearly-uniform polynomial growth of degree $d > 3$ (in a suitable
statistical sense), the random walk is subdiffusive.
So again, a disagreement between the dimension of volume growth
and the topological dimension results in a degeneracy typical
in the geometry of fractals (see, e.g., \cite{Barlow98}).

Finally, consider a seminal result of Benjamini and Schramm \cite{bs01}:  If $(G,\rho)$ is
the local distributional limit of a sequence of finite planar graphs
with uniformly bounded degrees, then $(G,\rho)$ is almost surely recurrent.
In this sense, any such limit is spectrally (at most) two-dimensional.
This was extended by Gurel-Gurevich and Nachmias \cite{gn13} to unimodular random
graphs with an exponential tail on the degree of the root, making it applicable to the UIPT.
Benjamini \cite{BenStFlour13} has conjectured that this holds
for every planar graph with uniform polynomial volume.
We construct a family of counterexamples.
Our focus on rational degrees of growth is largely for simplicity;
suitable variants of our construction should yield similar results for all real $d > 2$
(see \pref{rem:real-degrees}).

\begin{theorem}\label{thm:transient-intro}
   For every rational $d > 2$, there is a {\em transient} planar graph with uniform
   polynomial growth of degree $d$.
\end{theorem}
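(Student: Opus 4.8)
The plan is to construct $G$ by hand as a planar, bounded-degree graph and then verify the two nontrivial requirements — uniform polynomial growth of degree $d$ and transience — essentially separately. Transience will come from the standard flow criterion: it suffices to route a unit flow from some vertex $o$ to infinity with finite energy, equivalently (Thomson's principle) to exhibit a vertex with finite effective resistance to infinity. Thus the construction must simultaneously (a) be ``$d$-dimensional at every scale and around every vertex,'' so that volumes of balls can be pinned from above and below, and (b) contain enough ``parallel capacity toward infinity'' that a suitably spread-out flow has summable energy. These two demands pull in opposite directions, and reconciling them is the heart of the matter.

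Concretely, I would build $G$ as a hierarchical family $G_0\subset G_1\subset\cdots$ with $G=\bigcup_k G_k$, where $G_k$ is obtained from $G_{k-1}$ by a fixed planar substitution rule phrased in terms of boxes (in the notation suggested by $\bx$, $\circ$, $\mid$): each box of the current scale is replaced by a bounded pattern of boxes that are $1/\sigma$ as large, glued along their boundary ports by low-degree connective gadgets (e.g.\ binary-tree ``mergers,'' so no vertex ever accumulates large degree). Writing $d=p/q$, one picks the scale factor $\sigma$ (a perfect $q$th power, say $\sigma=2^q$) and the number $N$ of sub-boxes so that $N=\sigma^{d}$; then a box of diameter $\asymp\sigma^{k}$ has volume $\asymp\sigma^{dk}$ and the growth dimension is forced to be exactly $d$ at every level — this is where rationality of $d$ is used, and nearby real degrees can be handled by letting the parameters drift slowly with $k$ (cf.\ \pref{rem:real-degrees}). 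On top of this self-similar bulk I would install a transient \emph{skeleton}: at a sparse, geometrically spaced sequence of scales the box pattern is given one extra parallel tier (a genuine branching into two independent continuations toward infinity), spaced rarely enough that the extra branches contribute only lower-order volume — so the growth exponent stays \emph{exactly} $d$ with uniform, not merely nearly-uniform, constants — yet infinitely often, so that infinity is reached through a tree-like branching structure rather than a single channel.

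Granting the construction, the verifications split as follows. Planarity is preserved step by step, since every box-substitution and port-gluing is carried out inside a disk of the current planar drawing; bounded degree is built into the connective gadgets. For uniform polynomial growth one fixes a vertex $v$ and radius $r$, takes $k$ with $\sigma^{k}\le r<\sigma^{k+1}$, and shows $B_v(r)$ is contained in a bounded number of scale-$(k+1)$ boxes and contains a scale-$(k-O(1))$ box near $v$; both have volume $\asymp r^{d}$, and the self-similarity of the substitution guarantees that no location ever looks lower-dimensional (no long thin ``path'' segments, no flat grid regions) while the sparsity of the branching guarantees it never looks higher-dimensional. For transience one writes down the obvious unit flow — push current out from $o$, splitting it evenly at each branch point of the skeleton — and bounds its energy scale by scale: the contribution of scale $\sigma^{k}$ is $\asymp\beta^{k}$, where $\beta$ is a fixed ratio (essentially the per-scale resistance multiplier of a skeleton segment divided by the parallelism created up to that scale), and a short computation shows $\beta<1$ precisely when $d>2$. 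Hence the total energy is a convergent geometric series and the walk on $G$ is transient. (That $G$ is then not a unimodular random graph, as remarked after the theorem, follows from \cite{bs01} together with \cite{gn13}, since $G$ has bounded degree; it is not needed for the theorem itself.)

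The step I expect to be the real obstacle is obtaining uniform — rather than nearly-uniform — polynomial growth while keeping $G$ transient. Any transient planar graph of polynomial growth must carry a branching structure escaping to infinity, and the naive ways of inserting such branching into a self-similar $d$-dimensional graph either inflate the exponent (branching too dense), create thin low-dimensional channels (branching too sparse, done carelessly), or spoil uniformity with polylogarithmic fluctuations (geometrically spaced branching on top of a fixed box size). Threading this needle — a skeleton invisible at the level of the growth exponent and its constants, present at infinitely many scales, and everywhere compatible with the $d$-dimensional self-similar bulk, planarity, and bounded degree — is the crux, and it is exactly what forces $G$ to be so unlike the homogeneous examples for which Benjamini's conjecture and the Benjamini--Papasoglu annular-separator phenomenon \cite{BP11} hold.
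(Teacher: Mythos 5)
Your proposal takes a genuinely different architectural route from the paper, and unfortunately the place where it is vague is exactly the place you yourself flag as the crux, so as written it does not constitute a proof.

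The paper does not build a nested, ball-like substitution graph with a grafted transient skeleton. Instead it builds $\cH_\infty$ as a semi-infinite \emph{linear concatenation} of finite, self-similar chunks $\bH^0 \mid \bH^1 \mid \bH^2 \mid \cdots$, where each $\bH^n = \bH^{\circ n}$ is the $n$-fold tiling product of a fixed small rectangle tiling $\bH = \bT_{\langle 3,6,3\rangle}$ of the unit square. The graph is one-ended; the root sits at the left end; each successive chunk is exponentially larger. Uniform polynomial growth of degree $d = \log_3 12$ follows from a local analysis within each chunk (Lemmas 2.9--2.11), and transience follows because the left-edge-to-right-edge effective resistance of $\bH^n$ decays geometrically, $\rho(\bH^n) \lesssim (5/6)^n$ (Lemma 2.18), so the total resistance from the root to infinity is a convergent series (Corollary 2.19). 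There is no ``skeleton'' and no branching toward infinity at all: the decaying resistance comes from the fact that the tiling has ``fat'' interior columns (more tiles than the boundary columns), so the harmonic sum $\Gamma = \sum_i 1/\gamma_i = 1/3 + 1/6 + 1/3 = 5/6 < 1$ multiplies at each level of the tiling product. The general case, $\bT_{(b,k)}$ with $d = \log_b(bk)$ rational, is Theorem 3.8 and only requires $k>b$, i.e., $d > 2$, to force $\Gamma_{b,k} < 1$.

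The concrete gaps in your proposal are these. First, the skeleton construction (``at sparse, geometrically spaced scales... one extra parallel tier... a genuine branching into two independent continuations toward infinity'') is never specified; you explicitly say threading this needle is the crux, which means the key step of the proof is missing. Second, the asserted energy estimate ``contribution of scale $\sigma^k$ is $\asymp \beta^k$ with $\beta < 1$ precisely when $d>2$'' is not justified and appears to lean on the heuristic that a $d$-dimensional medium has annular resistance $\asymp R^{2-d}$; but that heuristic is exactly what fails in planar graphs (Benjamini--Papasoglu gives annular separators of size $O(R)$, hence $\reff \gtrsim 1/R$ regardless of $d$), so ``$d>2 \Rightarrow$ geometric decay'' needs an actual construction, not a scaling argument. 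Third, a skeleton that ``branches into two independent continuations toward infinity'' at infinitely many scales produces, at the top level, a tree structure, and making the volume around vertices deep in different branches match the $\asymp r^d$ law \emph{uniformly in the root} is nontrivial; the paper avoids this entirely by making the graph one-ended with no topological branching. In short, the paper's strip-concatenation design is precisely what lets one compute resistances exactly (series and parallel laws on columns) and verify uniform growth locally, bypassing the tension your write-up identifies but does not resolve.
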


Conversely, it is well-known that {\em any graph} with growth rate $d \leq 2$ is recurrent.
The examples underlying \pref{thm:transient-intro} cannot be unimodular.
Nevertheless, we construct unimodular examples addressing some of the issues raised above.
Angel and Nachmias (unpublished) showed the existence, for every $\e > 0$ sufficiently small,
of a unimodular random planar graph $(G,\rho)$ on which the random walk
is almost surely diffusive, and which almost surely satisfies
\[
   \lim_{r \to \infty} \frac{\log |B_G(\rho,r)|}{\log r} = 3 - \e.
\]
Here, $B_G(\rho,r)$ is the graph ball around $\rho$ of radius $r$.
In other words, $r$-balls have an asymptotic growth rate of $r^{3-\e}$ as $r \to \infty$.

The authors of \cite{BP11} asked whether in planar graphs with uniform growth of degree $d \geq 2$,
the speed of the walk should be at most $t^{1/d+o(1)}$.  We recall the following weaker theorem.

\begin{theorem}[\cite{lee17a}]
   Suppose $(G,\rho)$ is a unimodular random planar graph and $G$ almost surely
   has uniform polynomial growth of degree $d$.  Then:
   \[
      \E\left[d_G(X_0,X_t) \mid X_0 = \rho\right] \lesssim t^{1/\max(2,d-1)}.
   \]
\end{theorem}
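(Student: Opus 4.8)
The bound $\E[d_G(X_0,X_t)\mid X_0=\rho]\lesssim t^{1/\max(2,d-1)}$ is the minimum of a diffusive estimate $\lesssim t^{1/2}$ and a planar sub-diffusive estimate $\lesssim t^{1/(d-1)}$, so the plan is to establish both and intersect them. For each I would go through exit times: writing $\tau(x,r)=\inf\{s:d_G(X_s,x)\ge r\}$, a lower bound of the shape $\E[\tau(X_0,r)]\gtrsim r^{\beta}$ (valid in a suitably averaged sense) upgrades to the speed bound $\lesssim t^{1/\beta}$ by the standard chaining argument: take $r\asymp t^{1/\beta}$, cut $[0,t]$ along the successive times the walk first moves distance $r$ from its previous such position, note via Markov's inequality that only $O(1)$ of these fit into $[0,t]$ in expectation, and bound $d_G(X_0,X_t)$ by $r$ times their number. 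The diffusive half requires no planarity: uniform polynomial growth gives $|B_G(x,s)|\le Cs^d$ for all $x,s$, so the Varopoulos--Carne inequality yields $\Pr_\rho[d_G(X_0,X_t)\ge s]\le|B_G(\rho,s)|\,e^{-s^2/2t}$, and integrating over $s$ gives $\E_\rho[d_G(X_0,X_t)]\lesssim\sqrt t$ (the polynomial volume factor is lower order and, since $d>2$, harmless for the final exponent).

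For the sub-diffusive half, planarity enters through the Benjamini--Papasoglu annular separators. Since $G$ is a.s.\ planar with uniform polynomial growth of degree $d$, at every vertex $x$ and scale $2^k$ the annulus $B_G(x,2^k)\setminus B_G(x,2^{k-1})$ can be split from $B_G(x,2^k)^c$ by deleting $O(2^k)$ vertices; passing to the incident edges (and controlling vertex degrees along the way, see below) yields an edge cutset $\Pi_k(x)$ of size $O(2^k)$ separating $B_G(x,2^{k-1})$ from $B_G(x,2^k)^c$. Applying the Nash--Williams inequality to the single cutset $\Pi_k(\rho)$ lower-bounds the effective resistance across the $k$-th dyadic annulus by $\asymp 2^{-k}$. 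I would then combine this with a localized commute-time identity: the $k$-th annulus, as a finite network, has total edge weight $\asymp 2^{kd}$ (by uniform growth and planarity), so the walk needs $\gtrsim 2^{kd}\cdot 2^{-k}=2^{k(d-1)}$ steps in expectation to cross it. Since before leaving $B_G(\rho,r)$ the walk must pass through every dyadic annulus up to scale $r$, summing the (geometric) series gives $\E_\rho[\tau(\rho,r)]\gtrsim r^{d-1}$, hence $\lesssim t^{1/(d-1)}$; intersecting with the diffusive bound, $\min(t^{1/2},t^{1/(d-1)})=t^{1/\max(2,d-1)}$.

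The step I expect to be the real obstacle is making the previous paragraph honest, and it is exactly where unimodularity of $(G,\rho)$ is indispensable. A resistance lower bound $\reff\gtrsim 2^{-k}$ across an annulus does not by itself trap the walk there: through the identity $\E_\rho[\tau(\rho,r)]=\reff(\rho\leftrightarrow\partial B_G(\rho,r))\cdot\sum_{x\in B_G(\rho,r)}\pi(x)\,\Pr_x[\tau_\rho<\tau(\rho,r)]$, one still needs a constant fraction of the stationary mass $\pi(B_G(\rho,r))\asymp r^d$ to return to $\rho$ before exiting --- equivalently, the Green's function of the ball must be spread over $\Omega(r^d)$ worth of mass rather than concentrated near $\rho$, and the per-annulus crossing-time estimates must hold in an averaged sense along the trajectory, not merely from the root. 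This is precisely what fails for the deterministic graphs of \pref{thm:transient-intro}: they are planar with uniform polynomial growth, so they do carry $O(r)$ annular separators, yet the walk drifts outward and escapes. Unimodularity forbids such net outward drift --- by the mass-transport principle, outflow from a ball is balanced by inflow, which pins the Green's function down and also keeps the root degree (hence the degrees encountered along the walk, hence the vertex-to-edge separator conversion) integrable. I would assemble these unimodular inputs as in \cite{lee17a}, routing them through the conformal growth rate of $(G,\rho)$ --- bounded above by the $O(r)$ annular separators and simultaneously controlling both the return probability and the speed --- rather than reproving the mass-transport bookkeeping by hand.
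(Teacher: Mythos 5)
This theorem is quoted from \cite{lee17a}; the paper you are reading does not reprove it, so there is no in-paper argument to match your sketch against. Judged on its own terms, your sketch gets the two halves of the bound and the role of unimodularity qualitatively right, but the middle paragraph --- the sub-diffusive half --- does not close as written, and you yourself concede this by routing the final step back through \cite{lee17a}.

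Two concrete issues with the annulus argument. First, the quantity $2^{kd}\cdot 2^{-k}$ is a \emph{commute}-time lower bound (round trip across the annulus), not a one-way crossing time: the commute-time identity controls $\E_a[\tau_z]+\E_z[\tau_a]$, and the outward crossing time alone can be far smaller when there is net drift. Indeed, the transient graphs $\cT_\infty^{(b,k)}$ constructed in this very paper are planar, have uniform polynomial growth, therefore carry $O(r)$ annular separators and satisfy the Nash--Williams bound $\reff \gtrsim 2^{-k}$ across each dyadic annulus --- yet the walk escapes to infinity, so a chained lower bound $\E_\rho[\tau(\rho,r)]\gtrsim r^{d-1}$ cannot follow from resistance and volume alone. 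You correctly flag this and correctly name unimodularity/mass-transport as the missing ingredient; but ``unimodularity forbids outward drift, which pins down the Green's function'' is a statement of the obstacle, not a proof that it is overcome. Second, summing the Nash--Williams bounds $2^{-k}$ over dyadic scales gives $\reff(\rho\leftrightarrow B(\rho,r)^c) = O(1)$, not something growing in $r$, so even the resistance-to-infinity does not blow up --- consistent with the fact that these unimodular graphs are recurrent for soft reasons (Benjamini--Schramm), but again showing that the annulus-by-annulus bookkeeping does not by itself produce the $r^{d-1}$ exit-time scaling.

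What \cite{lee17a} actually does is different in mechanism: it introduces the conformal growth exponent of a unimodular random graph, shows it is at most $2$ for planar unimodular graphs (via the circle-packing/uniformization route that also underlies Benjamini--Schramm recurrence), and then derives the speed bound from a Markov-type inequality comparing the conformal metric to the graph metric. The $d-1$ in the exponent emerges from that comparison, not from a per-annulus commute-time computation. Your final paragraph gestures at exactly this, which is why I would say your proposal correctly reconstructs the \emph{strategy} at the level of inputs (planarity $\Rightarrow$ two-dimensional conformal behavior, unimodularity $\Rightarrow$ no drift, uniform growth $\Rightarrow$ volume normalization) but does not supply the argument that links them; the ``localized commute-time identity'' step, presented as the engine of the proof, is where the sketch would break down if you tried to push it through, and the actual engine lives entirely in the machinery of \cite{lee17a} that you defer to.
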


We construct examples where this dependence is nearly tight.

\begin{theorem}\label{thm:speed-intro}
   For every rational $d \geq 2$ and $\e > 0$, there is a constant $c(\e) > 0$ and
   a unimodular random planar graph $(G,\rho)$ such that $G$ almost surely has uniform polynomial growth of degree
   $d$, and
   \[
      \E\left[d_G(X_0,X_t) \mid X_0 = \rho\right] \geq c(\e) t^{1/\left(\max(2,d-1)+\e\right)}.
   \]
\end{theorem}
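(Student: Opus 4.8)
The plan is to construct $(G,\rho)$ as a local weak limit of a sequence of finite planar graphs $\{G_N\}$, each built by a self-similar hierarchical recursion, rooted at a uniformly random vertex; the limit is then automatically unimodular. The recursion combines two ingredients at each scale: ``grid-like'' pieces that make small balls genuinely $d$-dimensional, and a hierarchical ``tree-of-pieces'' assembly (with branching number $b$ and geometric length scales $\ell_k = \alpha^k$) that, at large scales, produces the prescribed volume growth. The parameters $b,\alpha,\dots$ — all taken to be powers of $2$ — are tuned so that $\log_\alpha b$, together with the local dimension of the pieces, equals the target rational $d$; rationality is precisely what permits a clean exact choice (cf.\ \pref{rem:real-degrees}). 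First I would give $G_N$ explicitly, check that planarity is preserved by the recursion, and verify that the recursion is ``measure-preserving'' in the sense needed for the uniformly-rooted limit $(G,\rho)$ to be unimodular (alternatively one can set up a Galton--Watson-type recursive randomization that is unimodular by construction).

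Second, I would verify that $G$ almost surely has uniform polynomial growth of degree $d$, that is $|B_G(v,r)| = \Theta(r^d)$ for every vertex $v$ and every $r>0$. The matching upper and lower bounds at large scales come from summing the geometric contributions of the hierarchical levels met by a ball of radius $r$, as in the finite computation $\sum_{j \lesssim \log_\alpha r} b^{j}\,\ell_j^{\,(\dim)} \sim r^{d}$. The delicate point is small-scale uniformity: balls of small radius centered \emph{anywhere} must still be $\Theta(r^d)$ rather than lower-dimensional, which is exactly what forces the construction to be self-similar across all scales instead of containing large homogeneous low-dimensional blocks (in particular a plain tree-of-segments will not do).

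Third — the crux — I would prove the speed lower bound. The companion upper bound $t^{1/\max(2,d-1)}$ ultimately reflects the $O(r)$-sized annular separators of Benjamini--Papasoglou: the $r$-ball has $\Theta(r^d)$ vertices but boundary of size only $\Theta(r)$. To nearly match it I would show the walk is no more bottlenecked than that, working in the electrical-network picture: estimate the effective resistance across the annulus $B(\rho,2r)\setminus B(\rho,r)$ and show it is $r^{\,\max(2,d-1)-d+o(1)}$, so that (shell volume)$\times$(shell resistance) $= r^{\,\max(2,d-1)+o(1)}$; by the commute-time identity the walk then crosses scale $r$ in time $r^{\,\max(2,d-1)+o(1)}$, whence $d_G(X_0,X_t)\gtrsim t^{1/(\max(2,d-1)+\e)}$ with good probability (and hence in expectation). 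Equivalently one can couple the scale-index of the walk with a biased one-dimensional walk. For $d\le 3$ this amounts to showing the walk is essentially diffusive — the separators, though of size only $O(r)$, carry enough parallel conductance that the walk is not slowed to the naive $t^{1/d}$ rate — while for $d>3$ one must additionally show the walk does not get stuck in the dangling sub-structures for longer than $r^{\,d-1+o(1)}$.

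The main obstacle is this last step, and specifically making it coexist with the first two. Unimodularity together with planarity and the Benjamini--Schramm / Gurel-Gurevich--Nachmias recurrence theorems force $G$ to be recurrent when $d=2$ and only ``barely'' transient when $d>2$, so the hierarchical bottlenecks live on a knife's edge: strong enough to create $d$-dimensional volume and (for $d>3$) the $r^{\,d-1}$ escape time, yet not so strong as to trap the walk longer than that (which would push the speed exponent below the target), all while keeping the pieces planar and the recursion unimodular. Controlling the effective resistance across every scale uniformly in the random environment, and ruling out atypically deep traps in the random structure, is where the real work lies; the $\e$ loss in the exponent is exactly the slack absorbing the $\Theta(\log r)$ many scales, each of which may contribute a sub-polynomial factor.
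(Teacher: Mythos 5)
Your overall template---a self-similar planar construction whose limit is unimodular by degree-biased uniform rooting, together with effective-resistance estimates to control the walk---is the same as the paper's. But the central step of your speed argument, as written, does not yield the claimed exponent, and the fix is precisely the device you mention only in passing.

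The problem is the commute-time step. You write that ``(shell volume)$\times$(shell resistance)$=r^{\max(2,d-1)+o(1)}$'' and invoke the commute-time identity to conclude the walk crosses scale $r$ in time $r^{\max(2,d-1)+o(1)}$. The commute-time identity, however, says that the commute time between $A$ and $B$ in a finite graph $H$ equals $2\,c(H)\,\reff_H(A,B)$ where $c(H)$ is the \emph{total} conductance of $H$, not the conductance of the shell. To bound the time for the walk started at $\rho$ to reach distance $r$, the natural choice is $H=G[B(\rho,Cr)]$, which has $c(H)\asymp r^d$; and even though the annular resistance at each dyadic scale is $\lesssim r^{-(1-\e)}$, these annular resistances cannot be chained to bound $\reff_H(\rho,\partial B(\rho,Cr))$ from above, because the optimal flows across annuli are highly anisotropic (the paper emphasizes this in the discussion after \pref{thm:intro-resist}; indeed, if chaining worked, the graph would be transient, contradicting Benjamini--Schramm). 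The best you get from Nash--Williams is $\reff_H(\rho,\partial B(\rho,Cr))\gtrsim 1$, so the commute-time bound is $\gtrsim r^d$ --- that is only the trivial ``visit every vertex'' bound $t^{1/d}$, not the target $t^{1/\max(2,d-1)}$. So your resistance estimate, which is essentially the content of \pref{thm:annular-gbk}, is true but is not the right object to feed into a commute-time argument.

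What makes the bound $r^{\max(2,d-1)}$ accessible in the paper is the linearization (\pref{sec:linearized}): collapsing each column of the tiling to a single weighted vertex produces a weighted path $L$ onto which the $\tilde G_n$-walk projects exactly. On a path, the hitting time from $u$ to $\{v,w\}$ with $v\prec u\prec w$ is a \emph{local} quantity: the walk is trapped in $[v,w]$ until it exits, so the relevant ``total conductance'' in the commute-time bound is only that of the window $[v,w]$ (this is the content of \pref{lem:hitting-time}, quoting \cite{CRRST96}). That is where the factor becomes (window conductance)$\times$(window resistance)$\asymp(\Gamma_{b,k}\,bk)^h = b^{h\,\mathsf d_w}$, i.e.\ $r^{\mathsf d_w}$ rather than $r^{\mathsf d_g}$. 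The final phrase of your proposal---``couple the scale-index of the walk with a biased one-dimensional walk''---gestures at this, but a coupling to a 1D walk indexed by \emph{scale} is not what the paper needs and is not obviously constructible; what works is an exact projection onto a 1D chain indexed by \emph{column}. Without that (or some other mechanism forcing hitting times to be controlled by local conductances), the step from annular resistance to the speed exponent is a genuine gap. Additionally, you still need the stationarity/reversibility argument of \pref{lem:Lspeed} to turn a hitting-time bound into the stated expectation lower bound; ``with good probability (and hence in expectation)'' elides a step that requires care (the walk could backtrack), but that part is repairable once the hitting-time bound is in hand.

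A lesser point: your construction is described only schematically. The paper's tilings $\bT_{(b,k)}$ and their tile powers are explicit enough that both the uniform growth at all scales (\pref{lem:growthHabn}) and the column-level resistance recursion (\pref{lem:eff-resist-general}) can be verified by induction; your ``grid-like pieces glued by a tree-of-pieces'' would need to be pinned down before one could check that the base of the induction and the column structure needed for linearization actually exist.
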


Finally, let us address another question from \cite{BP11}.
In conjunction with the existence of small annular separators,
the authors asked whether a planar graph with uniform polynomial growth
of degree $d > 2$ can be such that the complement of every ball is connected.
For example, in the UIPT, there are ``baby universes'' connected
to the graph via a thin neck that can be cut off by removing
a small graph ball.  

\begin{theorem}\label{thm:intro-complement}
   For every rational $d \geq 2$, there is a unimodular random planar graph $(G,\rho)$
   such that almost surely:
   \begin{enumerate}
      \item $G$ has uniform polynomial growth of degree $d$.
      \item The complement of every graph ball in $G$ is connected.
   \end{enumerate}
\end{theorem}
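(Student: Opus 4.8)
The plan is to realize $(G,\rho)$ via a hierarchical tiling scheme of the kind underlying our other constructions, but instantiated with a family of tiles whose boundaries are designed to be connected and to glue up into connected spheres. Concretely, one builds finite planar graphs $H_n$ by iterating the substitution rule to level $n$, each having uniform polynomial growth of degree $d$ --- with constants independent of $n$ --- for all radii up to a fixed fraction of its diameter, and takes $(G,\rho)$ to be a Benjamini--Schramm subsequential limit of the $H_n$ rooted at a uniformly random vertex. As a distributional limit of finite graphs, $(G,\rho)$ is automatically unimodular, and the uniform growth bound transfers to the limit: almost surely every ball in $G$, around every vertex, has size between $c r^d$ and $C r^d$, which is Item~(1). (For $d = 2$ one may simply take $G = \Z^2$.)

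For Item~(2) I would first reduce to a purely combinatorial statement. In any connected, locally finite graph the complement of a ball $B_G(v,r)$ is exactly $\bigcup_{\ell>r}S(v,\ell)$, where $S(v,\ell)=\{u:d_G(u,v)=\ell\}$; every vertex of $S(v,\ell)$ with $\ell\ge r+2$ has a neighbour in $S(v,\ell-1)$ (take one geodesic step toward $v$), so if \emph{every sphere $S(v,\ell)$ is connected} then, chaining the spheres outward, $\bigcup_{\ell>r}S(v,\ell)$ is connected for every $r$. Thus it suffices to produce a unimodular random planar graph of uniform polynomial growth of degree $d$ in which, almost surely, every sphere around every vertex is connected. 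Connectedness of a given sphere is witnessed by finitely many finite paths, hence is preserved under the Benjamini--Schramm limit, so it is enough to check, in each $H_n$, that all spheres of radius up to a constant fraction of the diameter (around every vertex) are connected. This I would verify by induction on the substitution level: by construction the boundary of each level-$k$ tile is a single path or cycle, and the gluing maps identify boundary arcs of adjacent tiles, so each sphere of the assembled graph is a union of boundary arcs chained end to end, and hence connected.

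The crux --- and the place where the tiles must be engineered rather than taken off the shelf --- is reconciling connected spheres with the growth exponent $d>2$ and with planarity simultaneously. Realizing degree-$d$ growth in a planar graph forces order $2^{k(d-1)}$ worth of new ``width'' to appear between scales $2^k$ and $2^{k+1}$, and the cheapest planar way to insert it, as in the UIPT or in the transient examples behind \pref{thm:transient-intro}, is to hang ``baby universes'' off thin necks; but a thin neck is precisely a small sphere cutting off part of the graph, so that route destroys connectedness of spheres and hence of ball complements. The tiles must instead spread the new structure along a macroscopically long connected portion of the boundary \emph{at every scale}, while still obeying the recursive volume bookkeeping that pins the exponent at $d$ and the planar-embedding constraints of the gluing. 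Showing that tiles with this combination of features exist --- ``wide at every scale'' yet not overshooting the target volume nor breaking planarity --- is the technical heart of the argument; once they are in hand, the connected-sphere assertion, and with it Item~(2), is an essentially formal consequence of the tiles' boundary structure.
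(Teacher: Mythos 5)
Your reduction --- if every sphere $S(v,\ell)$ is connected then so is the complement of every ball, by chaining spheres outward --- is correct as a logical implication, and the remark that connectedness of a fixed sphere is a local event preserved under Benjamini--Schramm limits is also fine. But this reduction is strictly stronger than what the theorem requires, and you never exhibit a planar graph of uniform polynomial growth $d>2$ in which all spheres are connected; the existence of suitably ``engineered tiles'' is deferred and explicitly labelled ``the technical heart of the argument.'' That missing construction is exactly the content of the theorem, so as written the proposal has a genuine gap that the reduction alone cannot bridge. It is also not at all clear that such a construction exists in the form you imagine: the tiling graphs in this paper do \emph{not} have connected spheres, and the paper never needs them to.

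What the paper actually does is prove a \emph{different, weaker} combinatorial property. It takes the tiling graphs $G_n = G(\bT^n_{(b,k)})$, passes to the \emph{cylindrical} version $\tilde{G}_n$ (adding, in each column, one edge joining the top and bottom tile so the column becomes a cycle), and shows in \pref{lem:simply-connected} that every ball in $\tilde{G}_n$ is ``vertically convex'': its intersection with each column-cycle $\cC_i$ is a single arc. The complement of an arc in a cycle is again an arc, so the complement of a ball is vertically convex too, and it is made connected by a single horizontal backbone of tiles at some height $h$ avoided by the ball --- such an $h$ exists because a radius-$R$ ball with $R\leq b^n/3$ occupies vertical extent $<1$. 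A sphere in $\tilde{G}_n$, by contrast, typically meets a column in exactly the two endpoint tiles of the ball-arc, and nothing in the paper (or in your sketch) shows these two-tile slices chain together. Two further points you miss: the cylinder step is essential, not cosmetic --- in the uncylindered $G_n$ a ball straddling the middle of a tall column splits that column's complement into a top and a bottom piece, so the claim fails outright; and the final paragraph's picture of ``baby universes on thin necks'' misdiagnoses the obstruction --- the graphs $G_{b,k}$ have bounded degree and no necks, and the difficulty is entirely column-wise disconnection, which is what vertical convexity plus the cylinder are designed to address.
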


\medskip
\noindent
{\bf Annular resistances.}
Our unimodular constructions have the property that the ``Einstein relations'' (see, e.g., \cite{Barlow98})
for various dimensional exponents do not hold.
In particular, this implies that the graphs we construct are not strongly recurrent (see, e.g., \cite{KM08}).
Indeed, the effective resistance across annuli can be made very small
(see \pref{sec:resist} for the definition of effective resistance).

\begin{theorem}\label{thm:intro-resist}
   For every $\e > 0$ and $d \geq 3$, there is a unimodular random planar graph $(G,\rho)$
   that almost surely has uniform polynomial volume growth of degree $d$ and, moreover,
   almost surely satisfies
   \begin{equation}\label{eq:intro-reff}
      \reff^G\left(B_G(\rho,R) \leftrightarrow V(G) \setminus B_G(\rho,2R)\right) \leq C(\e) R^{-(1-\e)}, \quad \forall R \geq 1,
   \end{equation}
   where $C(\e) \geq 1$ is a constant depending only on $\e$.
\end{theorem}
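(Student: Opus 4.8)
The plan is to take $(G,\rho)$ to be (a variant of) the very same unimodular random planar graphs used to prove \pref{thm:speed-intro}; the content of this theorem is that \emph{those} graphs already have small annular resistances, essentially for free, once one inspects their hierarchical structure. Concretely, I expect the construction to decompose $G$, almost surely, into dyadic ``shells'': for each integer $k$ there is a subgraph occupying graph-distance roughly $[2^k, 2^{k+1})$ from $\rho$ which (a) contains $\asymp 2^{kd}$ vertices --- this is exactly what pins the volume growth at degree $d$; (b) has radial diameter $\asymp 2^k$; and (c), being assembled from grid- or cylinder-like blocks, supports $m_k \asymp 2^{k(d-1)-o(1)}$ pairwise edge-disjoint ``radial'' paths of length $\lesssim 2^k$ running from its inner boundary to its outer boundary. (Note $2^k \cdot 2^{k(d-1)} = 2^{kd}$, so (a) and (c) are consistent.) The $o(1)$ is forced by discretizing the rational exponent $d$ and by the fan-out needed to match the circumferences of consecutive shells, and it is the source of the $\e$ in the statement.

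Granting this, the estimate \eqref{eq:intro-reff} is one application of the flow (Thomson) characterization of effective resistance. Fix a realization of $(G,\rho)$ and an $R \ge 1$, and let $k$ satisfy $2^k \le R < 2^{k+1}$. By the volume bounds, $B_G(\rho,R)$ lies inside the union of shells $0,\dots,k$, while $V(G) \setminus B_G(\rho,2R)$ contains every shell past $k+2$; hence each of the $m_{k+1}$ edge-disjoint radial paths through shell $k+1$ has one endpoint in $A := B_G(\rho,R)$ and the other in $B := V(G) \setminus B_G(\rho,2R)$, and has length $\lesssim R$. Sending $1/m_{k+1}$ units of flow along each of these paths produces a unit flow from $A$ to $B$, and since the paths are edge-disjoint with length $\lesssim R$ its energy is $\lesssim R/m_{k+1} \lesssim R^{\,2-d+o(1)}$. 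As $\reff^G(A \leftrightarrow B)$ is the infimum of such energies, it is $\le C(\e)\, R^{-(1-\e)}$ for every $d \ge 3$ once the $o(1)$ slack is absorbed into a small $\e$ (with the constant $C(\e)$ handling small $R$). This is deterministic given the shell decomposition, so it holds for all $R \ge 1$ at once; since this construction is already shown, in proving \pref{thm:speed-intro}, to be a unimodular random graph that almost surely has uniform polynomial growth of degree $d$, the theorem follows.

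It is worth a sanity check that this does not conflict with $G$ being recurrent (which it must be, being a bounded-degree distributional limit of finite planar graphs): what governs recurrence is the resistance of crossing the annulus \emph{while staying inside the annulus}, which remains bounded below, whereas the small quantity above is the resistance when one may route through the entire shorted ball $B_G(\rho,R)$, a huge, well-connected ``bus bar.'' This is precisely the breakdown of the Einstein relations / strong recurrence advertised just before the theorem.

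The real work, as always for this family, is concentrated in claim (c): that the scale-$R$ shell ``supports $\asymp R^{(d-1)-o(1)}$ edge-disjoint radial paths of length $\lesssim R$.'' The shell must simultaneously (i) carry $\asymp R^d$ vertices, so the growth is degree $d$ and no larger; (ii) have radial diameter $\asymp R$, so balls are neither too large nor too small; (iii) be planar and glue to both neighbouring shells despite the circumference growing by a constant factor per scale, which forces a branching structure; and (iv) embed inside a genuinely unimodular random graph, so the recipe must be mass-transport balanced rather than anchored at a distinguished origin. Reconciling (i)--(iv) with a nearly-linear-in-$R$ supply of short parallel crossings is the main obstacle, and it is the source of the $R^{o(1)}$ slack --- a construction achieving (a)--(c) with $o(1) = 0$ would yield the cleaner bound $R^{2-d}$. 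A minor additional point is that the radial paths must join the $A$-side to the $B$-side around the \emph{random} root, not merely around a fixed base vertex; this should follow from the homogeneity of the construction, namely that every vertex sees the same shell geometry around it at every scale.
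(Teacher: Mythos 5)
Your proposal has a genuine gap in its key technical claim (c). You assert that the shell at scale $2^k$ supports $m_k \asymp 2^{k(d-1)-o(1)}$ pairwise edge-disjoint radial paths of length $\lesssim 2^k$ crossing from the inner to the outer boundary. This is impossible for a planar graph of uniform polynomial growth. Every such path from $B_G(\rho,R)$ to $V(G) \setminus B_G(\rho,2R)$ must pass through an annular separator of the pair $\big(B_G(\rho,R),\ V(G) \setminus B_G(\rho,2R)\big)$; by the result of Benjamini--Papasoglu which the paper cites, such a separator of size $O(R)$ always exists, and in a bounded-degree graph this caps the number of pairwise edge-disjoint crossing paths at $O(R)$, not $R^{d-1}$. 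The same obstruction is visible spectrally: your proposed flow gives $\reff \lesssim R^{2-d+o(1)}$, but Nash--Williams applied to the $O(R)$ separator gives $\reff \gtrsim R^{-1}$, so your bound is already falsified for every $d>3$. (For $d=3$ the claimed $m_k \asymp R^{2-o(1)}$ is likewise impossible.) Your ``sanity check'' about routing through the shorted ball does not rescue this, since edge-disjointness is measured at the point where each path crosses the annulus, regardless of where it originates.

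The paper's actual argument never produces $R^{d-1}$ parallel crossings, and indeed never improves on $R^{-1+\e}$. Working in $G(\bT^n_{(b,k)})$, it fixes the scale $h \approx \log_b R$, groups columns into blocks $\cD_i$ of width $b^h$, and shows that the left-to-right effective resistance of a block-union containing $x$ is $\lesssim_k \rho(\bT^h_{(b,k)}) \lesssim \Gamma^h_{b,k}$, where $\Gamma_{b,k} = \sum_i 1/\gamma^{(b,k)}_i$ governs the resistance of the tiling by a Nash--Williams-type sum over its columns (\pref{lem:eff-resist-general}). Choosing $b = b_0^p, k = k_0^p$ and letting $p \to \infty$ drives $\log_b \Gamma_{b,k} \to -1$, yielding the stated $R^{-(1-\e)}$. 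Crucially, the witnessing flow is not radial and not isotropic: it exits $B_G(\rho,R)$ in one horizontal direction across the tiling, exactly as the paper notes must happen, since an isotropic small-resistance flow at every scale would make $G$ transient, contradicting Benjamini--Schramm recurrence of unimodular planar limits. Your shell picture with isotropic radial fan-out is therefore the wrong geometric model for this construction; the right model is the column/tiling decomposition, with the flow concentrated in a thin horizontal strip.
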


Note that the existence of annular separators of size $O(R)$ mentioned previously
gives $\reff^G\left(B_G(\rho,R) \leftrightarrow V(G) \setminus B_G(\rho,2R)\right) \gtrsim R^{-1}$
by the Nash-Williams inequality.
Moreover, recall that since the graph $(G,\rho)$ from \pref{thm:intro-resist} is unimodular and planar,
it must be almost surely recurrent (cf. \cite{bs01}).
Therefore the electrical flow witnessing \eqref{eq:intro-reff} cannot spread out ``isotropically''
from $B_G(\rho,R)$ to $B_G(\rho,2R)$.
Indeed, if one were able to send a flow roughly uniformly from $B_G(\rho,2^i)$ to $B_G(\rho,2^{i+1})$, then
these electrical flows would chain to give
\[
   \reff^G\left(\rho \leftrightarrow V(G) \setminus B_G(\rho,2^i)\right) \lesssim \sum_{j \leq i} 2^{-(1-\e)j},
\]
and taking $i \to \infty$ would show that $G$ is transient.

One formalization of this fact is that the graphs in \pref{thm:intro-resist} (almost surely) do not
satisfy an elliptic Harnack inequality.
These graphs are almost surely one-ended, and one can easily pass to a quasi-isometric
triangulation that admits a circle packing whose carrier is the entire plane $\R^2$.
By a result of Murugan \cite{Murugan19}, this implies that
the graph metric $(V(G),d_G)$ on the graphs in \pref{thm:intro-resist}
is {\em not} quasisymmetric to the Euclidean metric induced
on the vertices by any such circle packing.
(This can also be proved directly from \eqref{eq:intro-reff}.)

We remark on one other interesting feature of \pref{thm:intro-resist}.
Suppose that $\Gamma$ is a Gromov hyperbolic group whose visual boundary $\partial_{\infty} \Gamma$
is homeomorphic to the $2$-sphere $\bbS^2$.
The authors of \cite{bk02} construct a family $\{G_n : n \geq 1\}$ of discrete approximations to $\partial_{\infty} \Gamma$
such that each $G_n$ is a planar graph and the family $\{G_n\}$ has uniform polynomial volume growth.\footnote{More precisely,
for the boundary of a hyperbolic group as above, one can choose a sequence of approximations with this property.}
They show that if there is a constant $c > 0$
so that the annuli in $G_n$ satisfy uniform effective resistance estimates of the form
\[
   \reff^{G_n}\left(B_{G_n}(x,R) \leftrightarrow V(G_n) \setminus B_{G_n}(x,2R)\right) \geq c, \quad \forall 1 \leq R \leq \diam(G_n)/10,\ x \in V(G_n),\ \forall n \geq 1,
\]
then $\partial_{\infty} \Gamma$ is quasisymmetric to $\bbS^2$ (cf. \cite[Thm 11.1]{bk02}.)

In particular, if it were to hold that for any (infinite) planar graph $G$ with uniform polynomial growth we have
\[
   \reff^{G}\left(B_{G}(x,R) \leftrightarrow V(G) \setminus B_{G}(x,2R)\right) \geq c > 0, \quad \forall R \geq 1, x \in V(G),
\]
then it would confirm positively Cannon's conjecture from geometric group theory.
\pref{thm:intro-resist} exhibits graphs for which this fails in essentially the strongest way possible.

\subsection{Preliminaries}

We will consider primarily connected, undirected graphs $G=(V,E)$,
which we equip with the associated path metric $d_G$.
We will sometimes write $V(G)$ and $E(G)$, respectively, for the vertex and edge
sets of $G$.
If $U \subseteq V(G)$, we write $G[U]$ for the subgraph induced on $U$.

For $v \in V$, let $\deg_G(v)$ denote the degree of $v$ in $G$.
Let $\diam(G) \seteq \sup_{x,y \in V} d_G(x,y)$ denote the diameter
(which is only finite for $G$ finite).
For $v \in V$ and $r \geq 0$, we use $B_G(v,r) = \{ u \in V : d_G(u,v) \leq r\}$
to denote the closed ball in $G$.
For subsets $S,T \subseteq V$, we write $d_G(S,T) \seteq \inf \{ d_G(s,t) : s \in S, t \in T\}$.

Say that an infinite graph $G$ has {\em uniform volume growth of rate $f(r)$}
if there exist constants $C,c > 0$ such that
\[
   c f(r) \leq |B_G(v,r)| \leq C f(r)\qquad \forall v \in V, r \geq 1.
\]
A graph has {\em uniform polynomial growth of degree $d$} if it has uniform
volume growth of rate $f(r) = r^d$, and has {\em uniform polynomial growth}
if this holds for some $d > 0$.

For two expressions $A$ and $B$, we use the notation $A \lesssim B$ to denote
that $A \leq C B$ for some {\em universal} constant $C$.  The notation
$A \lesssim_{\gamma} B$ denotes that $A \leq C(\gamma) B$ where $C(\gamma)$
is a number depending only on the parameter $\gamma$.
We write $A \asymp B$ for the conjunction $A \lesssim B \wedge B \lesssim A$.

\subsubsection{Distributional limits of graphs}
\label{sec:graph-limits}

We briefly review the weak local topology on random rooted graphs.
One may consult the extensive reference of Aldous and Lyons \cite{aldous-lyons},
and \cite{bc12} for the corresponding theory of reversible random graphs.
The paper \cite{bs01}
offers a concise introduction to distributional limits of finite planar graphs.
We briefly review some relevant points.

Let $\graphs$ denote the set of isomorphism classes of connected, locally finite graphs;
let $\rgraphs$ denote the set of {\em rooted} isomorphism classes of {\em rooted}, connected,
locally finite graphs.
Define a metric on $\rgraphs$ as follows:  $\dloc\left((G_1,\rho_1), (G_2,\rho_2)\right) = 1/(1+\alpha)$,
where
\[
   \alpha = \sup \left\{ r > 0 : B_{G_1}(\rho_1, r) \cong_{\rho} B_{G_2}(\rho_2, r) \right\}\,,
\]
and we use $\cong_{\rho}$ to denote rooted isomorphism of graphs.
$(\rgraphs, \dloc)$ is a separable, complete metric space. For probability measures $\{\mu_n\}, \mu$ on
$\rgraphs$,
write $\{\mu_n\} \Rightarrow \mu$ when $\mu_n$ converges weakly to $\mu$ with respect to $\dloc$.

A random rooted graph $(G,X_0)$ is said to be {\em reversible}
if $(G,X_0,X_1)$ and $(G,X_1,X_0)$ have the same law, where $X_1$ is a uniformly
random neighbor of $X_0$ in $G$.
A random rooted graph $(G,\rho)$ is said to be {\em unimodular} if it satisfies the Mass Transport Principle (see, e.g., \cite{aldous-lyons}).
For our purposes, it suffices to note that if
$\E[\deg_G(\rho)] < \infty$, then
$(G,\rho)$ is unimodular if and only if the random rooted graph $(\tilde{G},\tilde{\rho})$ is reversible,
where $(\tilde{G},\tilde{\rho})$ has the law of $(G,\rho)$ biased by $\deg_G(\rho)$

If $\{(G_n,\rho_n)\} \todl (G,\rho)$,
we say that $(G,\rho)$ is the {\em distributional limit} of the sequence $\{(G_n,\rho_n)\}$,
where we have conflated random variables with their laws in the obvious way.
Consider a sequence $\{G_n\} \subseteq \graphs$ of finite graphs,
and let $\rho_n$ denote a uniformly random element of $V(G_n)$.  Then $\{(G_n,\rho_n)\}$
is a sequence of $\rgraphs$-valued random variables,
and one has the following: if $\{(G_n,\rho_n)\} \todl (G,\rho)$, then $(G,\rho)$ is unimodular.
Equivalently, 
if $\{(G_n,\rho_n)\}$ is a sequence of connected finite graphs
and $\rho_n \in V(G_n)$ is chosen according to the stationary measure of $G_n$,
then if $\{(G_n,\rho_n)\} \implies (G,\rho)$, it holds that $(G,\rho)$ is a
reversible random graph.

\section{A transient planar graph of uniform polynomial growth}

We begin by constructing a transient planar graph with uniform polynomial growth of degree $d > 2$.
Our construction in this section has $d = \log_3(12) \approx 2.26$.
In \pref{sec:generalizations}, this construction is generalized to any rational $d > 2$.

\subsection{Tilings and dual graphs}
\label{sec:tilings}

\begin{figure}[h]
      \begin{center}
         \subfigure[A tiling of the unit square\label{fig:tiling}]{ \includegraphics[width=4.5cm]{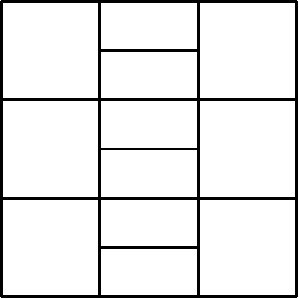}} \hspace{0.8in}
         \subfigure[The associated dual graph\label{fig:dual}]{  \includegraphics[width=4.5cm]{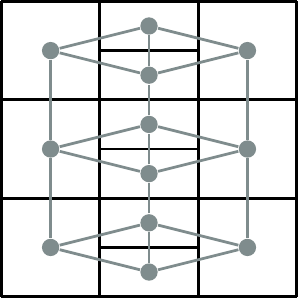}}
         \caption{Tilings and their dual graph}
   \end{center}
\end{figure}

Our constructions are based on planar tilings by rectangles.
A {\em tile} is an axis-parallel closed rectangle $A \subseteq \R^2$.
We will encode such a tile as a triple $(p(A), \ell_1(A), \ell_2(A))$, where
$p(A) \in \R^2$ denotes its bottom-left corner, $\ell_1(A)$ its width (length of
its projection onto the $x$-axis), and $\ell_2(A)$ its height (length of its projection onto the $y$-axis).
A {\em tiling $\bT$} is a finite collection of interior-disjoint tiles.
Denote $\region{\bT} \seteq \bigcup_{A \in \bT} A$.  If $R \subseteq \R^2$, we say that
{\em $\bT$ is a tiling of $R$} if $\region{\bT} = R$.
See \pref{fig:tiling} for a tiling of the unit square.

We associate to a tiling its {\em dual graph} $G(\bT)$ with vertex set $\bT$
and with an edge between two tiles $A,B \in \bT$ whenever $A \cap B$ has
Hausdorff dimension one; in other words, $A,B$ are tangent, but not only at a corner.
Denote by $\cT$ the set of all tilings of the unit square.
See \pref{fig:dual}.
For the remainder of the paper, we will consider only tilings $\bT$ for which $G(\bT)$ is connected.

\begin{figure}
\centering
	\includegraphics{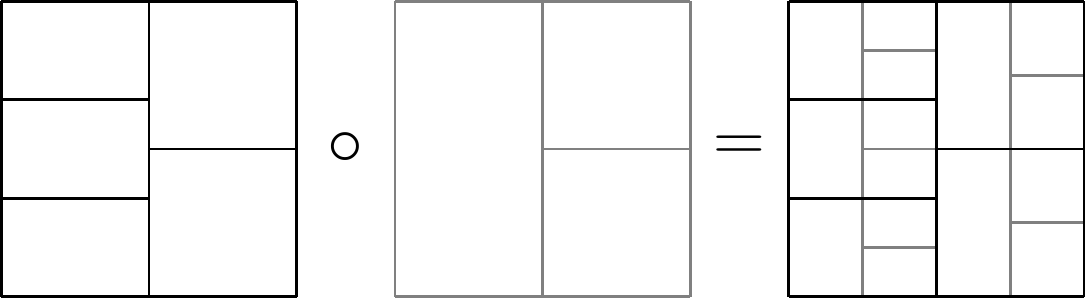}
   \caption{An example of the tiling product $S \tileprod T$\label{fig:tileprod}}
\end{figure}

\begin{definition}[Tiling product]
For $\bS, \bT \in \cT$, define the product
$\bS\tileprod \bT \in \cT$ as the tiling formed by replacing every tile in
$\bS$ by an (appropriately scaled) copy of $\bT$.
More precisely:  For every $A \in \bS$ and $B \in \bT$, there is a tile $R \in \bS\tileprod\bT$
with $\ell_i(R) \seteq \ell_i(A) \ell_i(B)$, and
\[
   p_i(R) \seteq p_i(A)+p_i(B) \ell_i(A),
\]
for each $i \in \{1,2\}$.
See \pref{fig:tileprod}.
\end{definition}

If $\bT \in \cT$ and $n \geq 0$, we will use $\bT^n \seteq \bT \tileprod \cdots \tileprod \bT$ to denote
the $n$-fold tile product of $\bT$ with itself.  The following observation
shows that this is well-defined.

\begin{observation}
   The tiling product is associative:  $(\bS \tileprod \bT) \tileprod \bU = \bS \tileprod (\bT \tileprod \bU)$
   for all $S,T,U \in \cT$.
   Moreover, if $\bI \in \cT$ consists of the single tile $[0,1]^2$, then $\bT \tileprod \bI = \bI \tileprod \bT$
   for all $\bT \in \cT$.
\end{observation}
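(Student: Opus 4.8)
The plan is to prove both identities by a direct computation with the encoding $(p(A),\ell_1(A),\ell_2(A))$ of a tile, working one coordinate $i \in \{1,2\}$ at a time. The statement is essentially bookkeeping; the one structural point to pin down first is that iterating the tiling product a fixed number of times yields a tile set that is canonically indexed by the Cartesian product of the factor tile sets, so that the two sides of the associativity identity can be compared tile by tile.

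First I would verify this indexing claim. For $\bS, \bT \in \cT$, write $R_{AB}$ for the tile of $\bS \tileprod \bT$ associated to the pair $(A,B) \in \bS \times \bT$, so that $\ell_i(R_{AB}) = \ell_i(A)\ell_i(B)$ and $p_i(R_{AB}) = p_i(A) + p_i(B)\ell_i(A)$. Since every tile has strictly positive side lengths, the map $(A,B) \mapsto R_{AB}$ is injective: for fixed $A$, distinct tiles $B, B' \in \bT$ give interior-disjoint scaled copies inside $A$, while for $A \neq A'$ the tiles $A, A'$ already have disjoint interiors. Thus $\bS \tileprod \bT$, viewed as a collection of rectangles, is in bijection with $\bS \times \bT$, and iterating this once more, the tiles of a two-fold product are indexed by triples.

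Next, for associativity, I would evaluate the tile indexed by an arbitrary triple $(A,B,C) \in \bS \times \bT \times \bU$ in each bracketing. In $(\bS \tileprod \bT) \tileprod \bU$ the tile associated to $(R_{AB}, C)$ has side lengths $\ell_i(R_{AB})\ell_i(C) = \ell_i(A)\ell_i(B)\ell_i(C)$ and bottom-left corner
\[
   p_i(R_{AB}) + p_i(C)\,\ell_i(R_{AB}) = p_i(A) + p_i(B)\,\ell_i(A) + p_i(C)\,\ell_i(A)\ell_i(B)\,.
\]
In $\bS \tileprod (\bT \tileprod \bU)$ the tile associated to $(A, R_{BC})$ has side lengths $\ell_i(A)\,\ell_i(R_{BC}) = \ell_i(A)\ell_i(B)\ell_i(C)$ and bottom-left corner
\[
   p_i(A) + p_i(R_{BC})\,\ell_i(A) = p_i(A) + \left(p_i(B) + p_i(C)\,\ell_i(B)\right)\ell_i(A)\,,
\]
which equals the previous expression after expanding. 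Since a tiling is determined by its set of rectangles, and these sets coincide rectangle by rectangle under the identification with $\bS \times \bT \times \bU$, we conclude $(\bS \tileprod \bT) \tileprod \bU = \bS \tileprod (\bT \tileprod \bU)$. For the identity, the single tile $I$ of $\bI$ has $p_i(I) = 0$ and $\ell_i(I) = 1$; substituting $B = I$ into the product formula yields, for each $A \in \bT$, a tile with $\ell_i = \ell_i(A)$ and $p_i = p_i(A)$, i.e.\ $A$ itself, so $\bT \tileprod \bI = \bT$, and substituting $A = I$ yields, for each $B \in \bT$, the tile $B$ itself, so $\bI \tileprod \bT = \bT$ as well.

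I do not expect a genuine obstacle: the only thing to watch is keeping straight which factor supplies the ``outer'' placement and scale in the nested product — that is, that in $\bS \tileprod \bT$ the factor $\bS$ contributes the global offset $p_i(A)$ and scaling $\ell_i(A)$ within which the copy of $\bT$ is placed, and that this nesting is what makes the affine recursions on the two sides collapse to the same expression. (As a side remark relevant to the rest of the paper: if $G(\bS)$ and $G(\bT)$ are connected, then so is $G(\bS \tileprod \bT)$, since each scaled copy of $\bT$ is internally connected, and whenever two tiles of $\bS$ share a one-dimensional boundary, comparing the two finite subdivisions induced on that boundary shows some tile of one copy meets some tile of the other in a segment of positive length, giving an edge of $G(\bS \tileprod \bT)$ between the two copies.)
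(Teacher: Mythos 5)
Your proof is correct and is exactly the routine bookkeeping verification one would expect for this Observation, which the paper states without proof; the key computation that $p_i(A) + p_i(B)\ell_i(A) + p_i(C)\ell_i(A)\ell_i(B)$ arises from both bracketings, together with the multiplicativity of $\ell_i$, is the whole content. Your identity argument ($\bT \tileprod \bI = \bT = \bI \tileprod \bT$, slightly stronger than what was asked) and your parenthetical remark on connectivity of $G(\bS \tileprod \bT)$ are also correct.
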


\begin{figure}
\centering
	\includegraphics{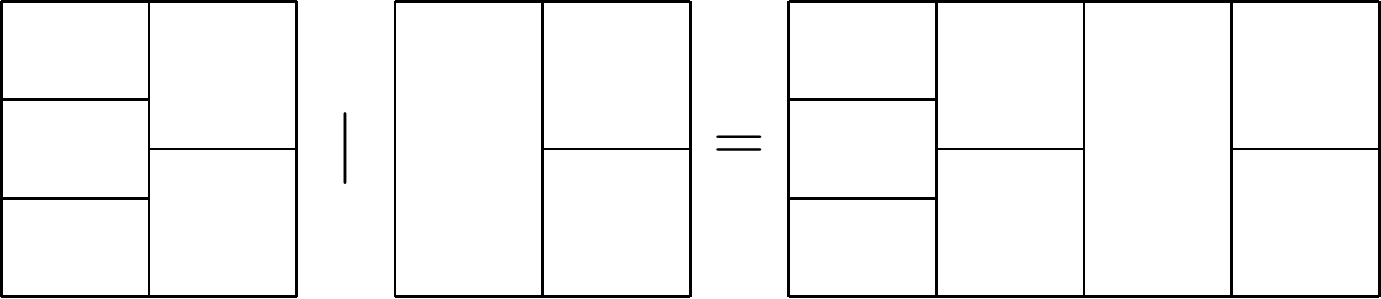}
   \caption{An example of the tiling concatenation $S \tilecat T$\label{fig:tilecat}}
\end{figure}

\begin{definition}[Tiling concatenation]
   \label{def:concat}
   Suppose that $\bS$ is a tiling of a rectangle $R$ and $\bT$ is a tiling of a rectangle $R'$
   and the heights of $R$ and $R'$ coincide.
   Let $R''$ denote the translation of $R'$ for which the left edge of $R''$ coincides with the right edge of $R$,
   and denote by $\bS \tilecat \bT$ the induced tiling of the rectangle $R \cup R''$.
   See \pref{fig:tilecat}.
\end{definition}

\begin{figure}[h]
   \begin{center}
   \includegraphics[width=16cm]{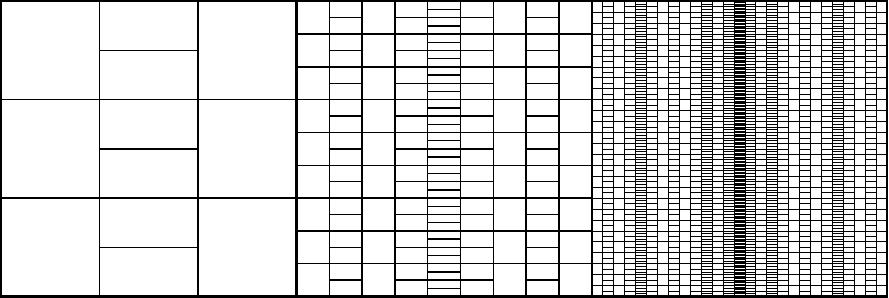}%
   \end{center}
   \caption{The tiling $\bH^1 \tilecat \bH^2 \tilecat \bH^3$\label{fig:transient}}
\end{figure}

Let $\bH$ denote the tiling in \pref{fig:tiling}, and define
$\cH_n \seteq G(\bH^0 \tilecat \bH^1 \tilecat \cdots \tilecat \bH^n)$;
see \pref{fig:transient}, where we have omitted $\bH^0$ for ease of illustration.
The next theorem represents our primary goal for the remainder of this section.
Note that $\bH^0 = \{\rho\}$ consists of a single tile,
and that $\{(\cH_n,\rho)\}$ forms a Cauchy sequence in $(\rgraphs, \dloc)$,
since $(\cH_n,\rho)$ is naturally a rooted subgraph of $(\cH_{n+1},\rho)$.
Letting $\cH_{\infty}$ denote its limit, we will establish the following.

\begin{theorem}\label{thm:transient}
   The infinite planar graph $\cH_{\infty}$ is transient and has uniform polynomial volume growth of degree $\log_3(12)$.
\end{theorem}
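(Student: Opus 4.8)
The plan is to derive both assertions — uniform polynomial growth of degree $d:=\log_3 12$, and transience — from three facts about the iterated tilings, each established by induction on $n$ using the self-similarity $\bH^n=\bH^{n-1}\tileprod\bH$ (so that $G(\bH^n)$ is $G(\bH^{n-1})$ with every vertex blown up into a copy of $G(\bH)$, copies sitting in adjacent tiles of $\bH^{n-1}$ being joined by the edges along their shared boundary): (i) $|V(G(\bH^n))|=12^n$ while the distance in $G(\bH^n)$ between opposite sides of the unit square, and $\diam G(\bH^n)$, are all $\asymp 3^n$; (ii) $G(\bH^n)$ has uniform polynomial growth of degree $d$ at all scales $1\le r\le 3^n$; and (iii) $G(\bH^n)$ contains $\gtrsim 4^n$ internally vertex-disjoint paths of length $\lesssim 3^n$ between the left and right sides of the unit square (and, as needed for the induction, between the other relevant pairs of sides). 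The base cases come from inspecting the twelve-tile graph $G(\bH)$ of \pref{fig:tiling}. Fact (i) says $G(\bH^n)$ is ``square-like'' — diameter $\asymp 3^n=(12^n)^{1/d}$ — and fact (iii) says it has as many short parallel crossings as its volume allows (since $4^n$ disjoint paths of length $3^n$ already use $\asymp 12^n$ vertices); the nontrivial content of the construction is that one fixed tiling $\bH$ makes the graphs $G(\bH^n)$ satisfy all three at once: a plain $3\times 3$ grid gives (i)--(ii) only with $d=2$ and admits just three parallel crossings (recurrent), while a $3\times 4$ grid admits four parallel crossings but is elongated, violating (i)--(ii). The inductions are otherwise routine: for (iii) the $\asymp 4^{n-1}$ disjoint paths in $\bH^{n-1}$ lift to $\asymp 4^{n-1}$ disjoint ``tubes'' of $G(\bH)$-copies in $\bH^n$, each admitting $\ge 4$ disjoint crossings by the base case; for (ii), a ball of radius $r$ with $3^{m-1}<r\le 3^m$ meets only $O(1)$ of the level-$(n{-}m)$ sub-copies of $G(\bH^m)$ and inside each either covers a constant fraction of its $12^m$ vertices or is controlled by the inductive hypothesis, so $|B|\asymp 12^m\asymp r^d$.

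Granting (i)--(ii), the growth claim for $\cH_\infty$ follows by regarding $\cH_\infty$ as the dual graph of $\bH^0\tilecat\bH^1\tilecat\cdots$, with ``block $k$'' the copy of $G(\bH^k)$ over $[k,k{+}1]\times[0,1]$. By (i), $d_{\cH_\infty}(u,v)\asymp 3^{\max(j,k)}$ when $u,v$ lie in distinct blocks $j\ne k$, and $\le\diam G(\bH^k)\asymp 3^k$ when both lie in block $k$. Hence, for $v$ in block $k$, the ball $B_{\cH_\infty}(v,r)$ is contained in the union of the blocks $j$ with $3^{|j-k|}\lesssim r$ (and $3^j\lesssim r$ for $j<k$), or — when $r\lesssim 3^k$ — essentially in a radius-$\asymp r$ ball inside block $k$; conversely it contains a constant fraction of some block $j$ with $3^j\asymp r$, or such an inside-block ball. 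Since block $j$ has $12^j$ vertices, the resulting geometric sum is dominated by its largest term $12^{\Theta(\log_3 r)}\asymp r^d$, and the partial-block contributions are (half-)balls of radius $\le r$ inside various $G(\bH^j)$, hence also $\lesssim r^d$ by (ii). This yields $|B_{\cH_\infty}(v,r)|\asymp r^d$ uniformly in $v$ and $r$. Planarity of $\cH_\infty$ is immediate, being the dual of a planar tiling.

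For transience it suffices, by Thomson's principle, to exhibit a unit flow from $\rho$ to infinity in $\cH_\infty$ of finite energy. Using (iii), route the flow inside block $k$ along its $\gtrsim 4^k$ disjoint left-to-right paths of length $\lesssim 3^k$, splitting the unit current as evenly as possible so each carries $\lesssim 4^{-k}$, and across the interface between blocks $k$ and $k{+}1$ distribute the arriving current onto the starting vertices of block $(k{+}1)$'s paths, keeping each interface edge's current $\lesssim 4^{-k}$. The energy dissipated in block $k$ is then $\lesssim (\text{number of paths})\cdot(\text{current per path})^2\cdot(\text{path length})\lesssim 4^k\cdot 4^{-2k}\cdot 3^k=(3/4)^k$, and the energy at interface $k$ is $\lesssim 4^{-k}$; summing, the total energy is $\lesssim\sum_{k\ge 0}\big((3/4)^k+4^{-k}\big)<\infty$. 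Hence the random walk on $\cH_\infty$ is transient.

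The main obstacle is (iii), and specifically reconciling it with (i)--(ii). The tension is real: property (iii) at the level of $\bH$ forces four internally disjoint length-$3$ crossings into the twelve tiles of the unit square — essentially the combinatorics of a $3\times 4$ grid — yet that grid's iterates are elongated and fail the ``square-like'' and ``uniform degree-$d$'' properties used for the growth estimate. The tiling of \pref{fig:tiling} must therefore interleave its four crossing lanes so that they remain within graph-distance $\asymp 3^n$ of one another at every scale, and checking this (together with the bookkeeping that vertex-disjoint paths in $\bH^{n-1}$ lift to vertex-disjoint, correctly matched tubes in $\bH^n$ even across boundaries where adjacent $\bH$-copies are scaled differently) is where the specific design of the construction, rather than soft arguments, is needed. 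A minor additional point is to keep the constants in the induction for (ii) uniform across all dyadic scales $3^{m-1}<r\le 3^m$, not merely at $r=3^m$.
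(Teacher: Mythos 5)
Your treatment of the volume growth is essentially the paper's argument (the paper's Lemmas~\ref{lem:example_diam}, \ref{lem:growthHn}, \ref{lem:Hn-uniform-growth} are your facts (i), (ii) and the block-decomposition deduction), so that part is fine. The transience half, however, hinges on fact (iii), which is false: $G(\bH^n)$ does \emph{not} contain $\gtrsim 4^n$ internally vertex-disjoint left--right crossings. The columns of $\bH^n$ have sizes $\gamma_{i_1}\cdots\gamma_{i_n}$ with $\gamma=(3,6,3)$, so already the third column (multi-index $(1,\dots,1,3)$) contains exactly $3^n$ tiles and is interior for $n\ge 2$. Since every left--right path must pass through every column, this column is a vertex cut of size $3^n$, and Menger's theorem gives at most $3^n$ internally vertex-disjoint crossings. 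Your heuristic ``$4^n$ paths of length $3^n$ use $\asymp 12^n$ vertices'' is a necessary volume count but not a sufficient one; the bottleneck columns of width $3^n$ are what actually govern the path count. Your inductive lifting also fails quantitatively: with $\asymp 4^{n-1}$ paths threading $\asymp 3^{n-1}$ sub-copies of $\bH$ per column, some sub-copy is hit by $\gtrsim (4/3)^{n-1}$ tubes, and a single copy of $G(\bH)$ (with only $3$ or $6$ tiles per column) cannot supply $4$ disjoint crossings to each of them.

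This is not a repairable detail but the crux of the construction. If you restrict flows to disjoint paths, the best you can do is route along $\asymp 3^n$ paths of length $\asymp 3^n$, giving energy $\asymp 3^n \cdot (3^{-n})^2 \cdot 3^n \asymp 1$ per block, which does not sum --- exactly as for the $\Z^2$ grid, and consistent with the fact that $\bH^n$ restricted to its narrowest columns looks two-dimensional. What makes $\cH_\infty$ transient is that the optimal flow \emph{splits and recombines} as it crosses columns of varying width, so that the energy paid in a column of width $w$ is $\asymp 1/w$, and the fat middle columns (width up to $6^n$) pull the total down. The paper captures this with effective resistance rather than disjoint paths: defining $\rho(\bT)$ as the left--right resistance (\pref{sec:resist}), the parallel law applied to the self-similar decomposition of $\bH^n$ into three horizontal strips, each $\bH^{n-1}\tilecat(\text{two stacked copies of }\bH^{n-1})\tilecat\bH^{n-1}$, yields the recursion $\rho(\bH^n)\le (5/6)\rho(\bH^{n-1}) + 3^{-n}$ (\pref{lem:left_right_reff}), where $5/6 = 1/3+1/6+1/3 = \sum_i 1/\gamma_i$. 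Chaining via \pref{lem:side_product_reff} gives $\sup_n\rho(\cH_n)\lesssim \sum_k\bigl((5/6)^k+3^{-k}\bigr)<\infty$, and \pref{thm:bdd-energy} concludes. You would need to replace your disjoint-paths scheme with a flow that spreads uniformly across each column (or just invoke the series/parallel calculus as the paper does) to close the gap.
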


Uniform growth is established in \pref{lem:Hn-uniform-growth} and transience in \pref{cor:transient}.

\subsection{Volume growth}

The following lemma shows that a ball of radius $r = \diam(\bH^n)$ in $\bH^n$ has volume $\asymp r^{\log_3(12)}$. Later on, in \pref{lem:growthHn}, we will show a similar bound holds for balls of arbitrary radius $1 \leq r \leq \diam(\bH^n)$ in $\bH^n$.
\begin{lemma}
\label{lem:example_diam}
   For $n \geq 0$, we have $|\bm{H}^n| = 12^n$, and $3^n \leq \diam(\bH^n) \leq 3^{n+1}$.
\end{lemma}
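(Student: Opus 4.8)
The plan is to prove both statements by induction on $n$, leveraging the recursive structure of the tiling product $\bH^n = \bH \tileprod \bH^{n-1}$. For the volume count $|\bH^n| = 12^n$, I would first check that $|\bH| = 12$ by directly counting the tiles in the tiling of Figure~\ref{fig:tiling} (the tiling $\bH$ of the unit square has $12$ rectangular tiles). Then, from the definition of the tiling product, every tile $A \in \bS$ is replaced by a scaled copy of $\bT$, so $|\bS \tileprod \bT| = |\bS| \cdot |\bT|$; applying this with $\bS = \bH$ and $\bT = \bH^{n-1}$ gives $|\bH^n| = 12 \cdot 12^{n-1} = 12^n$, with base case $|\bH^0| = |\{\rho\}| = 1$.

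For the diameter bounds, the key geometric observation is that $\bH$ tiles the unit square $[0,1]^2$, and hence (by associativity and an easy induction) $\bH^n$ also tiles $[0,1]^2$; the tiles of $\bH^n$ have sidelengths that are products of $n$ sidelengths drawn from $\bH$. I would first identify the largest and smallest sidelengths appearing in $\bH$ — call them $\ell_{\max}$ and $\ell_{\min}$ — and argue that these are both of order $1/3$ (so that the scaling factor per level is $3$; this is where the base $3$ in $\log_3(12)$ comes from). The smallest tile in $\bH^n$ then has sidelength $\asymp 3^{-n}$, which forces $\diam(G(\bH^n)) \gtrsim 3^n$ since one needs at least that many tangency steps to cross the unit square through tiny tiles. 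For the upper bound $\diam(\bH^n) \leq 3^{n+1}$, I would set up a recursive path-length estimate: any two tiles of $\bH^n$ lie in (copies of $\bH^{n-1}$ sitting inside) some tiles $A, A'$ of $\bH$; one walks within the copy of $\bH^{n-1}$ in $A$ to reach its boundary, crosses through a bounded-length path in $G(\bH)$ at the coarse scale, then walks within $\bH^{n-1}$ in $A'$ — giving a recursion $\diam(\bH^n) \leq 2\diam(\bH^{n-1}) + \mathrm{diam}(G(\bH)) + O(1)$, which unrolls to $O(3^n)$ with the right constant if $\mathrm{diam}(G(\bH))$ is small enough (one checks $\diam(G(\bH)) \leq 3$ by inspection of Figure~\ref{fig:dual}). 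I would tune constants so that the clean bound $3^n \le \diam(\bH^n) \le 3^{n+1}$ comes out; the base case $n=0$ gives $\diam = 0 \le 1 \cdot 3^0$... wait — the base case actually needs $n \ge 1$ for the lower bound, or one interprets $\diam(\bH^0) = 0$ and checks the inequality is vacuous or handled separately, and $\bH^1 = \bH$ satisfies $3 \le \diam(G(\bH)) \le 9$ directly.

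The main obstacle I anticipate is the diameter \emph{lower} bound, and more precisely making the recursion for the upper bound honest. The subtlety is that a geodesic in $G(\bH^n)$ need not respect the coarse $\bH$-structure — it may wander between several coarse tiles — so one must argue that crossing from one coarse tile to a non-adjacent one is never shorter than going through the dual-graph path in $G(\bH)$, which requires knowing that the copies of $\bH^{n-1}$ are ``geodesically well-behaved'' near their boundaries (e.g., that a tile touching the left edge of the copy is within distance $O(\diam \bH^{n-1})$ of a tile touching the right edge, which the inductive hypothesis already gives). The cleanest route is probably to prove a slightly stronger inductive statement — bounding not just the diameter but the distance between any tile and the ``boundary tiles'' of $\region{\bH^n} = [0,1]^2$ on each side — so that the concatenation/product steps compose cleanly. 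I would also double-check that the number $3$ is exactly right for $\bH$: the claim $|\bH|=12$ together with $\diam(\bH^n) \asymp 3^n$ is precisely what yields growth degree $\log_{3}(12)$, so the figure must have the smallest tile of side $1/3$; if instead the base is some other integer $k$, the degree would be $\log_k 12$ and the statement would need $k$ in place of $3$.
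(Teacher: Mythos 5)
Your volume count $|\bH^n| = 12^n$ matches the paper's (straightforward multiplicativity of the tile product). The diameter part, however, has a genuine gap, and the route you describe would not give the claimed bound.

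The recursion you write, $\diam(\bH^n) \leq 2\diam(\bH^{n-1}) + \diam(G(\bH)) + O(1)$, is not correct: a coarse step in $G(\bH)$ does not cost $O(1)$ steps in $G(\bH^n)$, because each intermediate coarse tile also contains a full copy of $\bH^{n-1}$ that must be traversed. The honest recursion is multiplicative, roughly $\diam(\bH^n) \leq \bigl(\diam(G(\bH))+1\bigr)\bigl(\diam(\bH^{n-1})+1\bigr)$, which unrolls to $O\!\left(\diam(G(\bH))^n\right)$. But $\diam(G(\bH)) = 4$ (for instance, the bottom-left and top-right corner tiles of $\bH = \bT_{\langle 3,6,3\rangle}$ are at distance exactly $4$; your ``one checks $\diam(G(\bH))\le 3$ by inspection'' is false). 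So this route gives $O(4^n)$ at best, which is the wrong exponent. You flag the subtlety yourself in your last paragraph but then resolve it in a way that still leaves the wrong recursion in place.

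A second, smaller inaccuracy: the smallest \emph{sidelength} in $\bH^n$ is \emph{not} $\asymp 3^{-n}$; the heights in $\bH$ are $1/3$ and $1/6$, so heights in $\bH^n$ can be as small as $6^{-n}$. What is true, and what the paper actually uses, is that all \emph{widths} are exactly $3^{-n}$: every column of $\bH$ has width $1/3$, and widths multiply under $\tileprod$. This is the decisive observation, and it is not a recursion at all. The paper's upper bound is a direct three-leg route: since every tile has width $3^{-n}$, there are $3^n$ columns, so from any $A$ one reaches the leftmost column in $\leq 3^n$ horizontal moves; the leftmost column has exactly $3^n$ tiles (by multiplicativity of the left-boundary count), so one moves vertically within it in $\leq 3^n$ steps; then one moves right to $B$ in $\leq 3^n$ steps. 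Total $\leq 3^{n+1}$. The lower bound $\diam \geq 3^n$ also comes from the uniform width (one must cross $3^n$ columns), which is the same intuition you had but pinned to the right quantity (width, not a generic ``sidelength''). I would replace the recursive step of your proof with this uniform-width/column-count argument.
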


\begin{proof}
   The first claim is straightforward by induction. 
   For the second claim, note that $\ell_1(A) = 3^{-n}$ for every $A \in \bH^n$.
      Moreover, there are $3^n$ tiles touching the left-most boundary of $[0,1]^2$.
   Therefore to connect any $A,B \in \bH^n$ by a path in $G(\bH^n)$, we need only go from $A$
   to the left-most column in at most $3^n$ steps, then use at most $3^n$ steps of the column,
   and finally move at most $3^n$ steps to $B$.
\end{proof}

The next lemma is straightforward.

\begin{lemma}
\label{lem:contract_growth_lower_bound}
Consider $\bS,\bT \in \cT$ and $G=G(\bS\tileprod\bT)$.
For any $X \in \bS\tileprod\bT$, it holds that $|B_G(X, \diam(G(\bT)))| \geq |\bT|$.
\end{lemma}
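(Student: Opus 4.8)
The plan is to exhibit, inside $B_G\bigl(X,\diam(G(\bT))\bigr)$, a full-size copy of $\bT$: namely the collection of tiles of $\bS\tileprod\bT$ that refine the single tile of $\bS$ containing $X$. The point is that this collection induces (a supergraph of) a copy of $G(\bT)$ in $G$, so its diameter in $G$ is at most $\diam(G(\bT))$, and it has exactly $|\bT|$ tiles.

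First I would recall the structure of the product. By definition every tile $R\in\bS\tileprod\bT$ comes from a unique pair $A\in\bS$, $B\in\bT$; write $R=R(A,B)$. Fix the tile $X=R(A_0,B_0)$ and set $\bT_{A_0}\seteq\{R(A_0,B):B\in\bT\}$, the tiles of the product that subdivide the single tile $A_0\in\bS$. Geometrically, $\bT_{A_0}$ is the tiling $\bT$ transported by the affine map $t\mapsto p(A_0)+(\ell_1(A_0)\,t_1,\ell_2(A_0)\,t_2)$ carrying $[0,1]^2$ onto $A_0$; in particular $\region{\bT_{A_0}}=A_0$, so $\bT_{A_0}$ is a tiling of $A_0$, and $|\bT_{A_0}|=|\bT|$.

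Second, I would observe that $B\mapsto R(A_0,B)$ is a graph isomorphism from $G(\bT)$ onto $G(\bT_{A_0})$, because rescaling by positive scalars along each axis preserves the property that two rectangles meet in a set of Hausdorff dimension one. Moreover $G(\bT_{A_0})$ is a subgraph of $G=G(\bS\tileprod\bT)$ on the vertex subset $\bT_{A_0}$: if $R(A_0,B)$ and $R(A_0,B')$ are tangent along a segment as tiles of $\bT_{A_0}$, they are literally the same rectangles in the product tiling and hence tangent there too (the induced subgraph $G[\bT_{A_0}]$ may carry extra edges, which only helps). Consequently, for every $B\in\bT$,
\[
   d_G\bigl(X,R(A_0,B)\bigr)\;\leq\;d_{G(\bT_{A_0})}\bigl(X,R(A_0,B)\bigr)\;\leq\;\diam\bigl(G(\bT_{A_0})\bigr)\;=\;\diam\bigl(G(\bT)\bigr).
\]
Hence $\bT_{A_0}\subseteq B_G\bigl(X,\diam(G(\bT))\bigr)$, and taking cardinalities yields $|B_G(X,\diam(G(\bT)))|\geq|\bT_{A_0}|=|\bT|$.

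I do not expect any genuine obstacle here; the only point requiring (elementary) care is the verification that an axiswise positive rescaling preserves one-dimensional tangency of axis-parallel rectangles, which is exactly what lets the copy of $\bT$ living inside $A_0$ embed as a subgraph of the product's dual graph.
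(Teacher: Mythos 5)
Your proof is correct, and it is exactly the standard argument the paper has in mind (the paper omits the proof, calling the lemma straightforward): the tiles refining the tile $A_0\in\bS$ containing $X$ form a rescaled copy of $\bT$ whose dual graph embeds in $G$, so all $|\bT|$ of them lie within distance $\diam(G(\bT))$ of $X$. The only point worth noting is that the induced subgraph $G[\bT_{A_0}]$ in fact coincides with $G(\bT_{A_0})$ (adjacency is the same one-dimensional tangency condition in both), but your weaker "supergraph" claim is harmless since the inequality goes the right way.
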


If $\bT$ is a tiling, let us partition the edge set $E(G(\bT)) = E_1(\bT) \cup E_2(\bT)$ into horizontal and vertical edges.
For $A \in \bT$ and $i \in \{1,2\}$, let $N_{\bT}(A,i)$ denote the set of tiles adjacent to $A$ in $G(\bT)$ along the $i$th direction, meaning that the edge separating them is parallel to the $i$th axis (see \pref{fig:adjacent}).

\begin{figure}[h]
	\begin{center}
		\includegraphics[width=4.5cm]{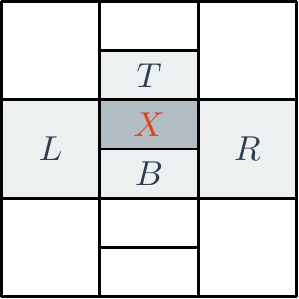}%
	\end{center}
   \caption{A tile $X \in \bH$ and its neighbors are marked. Here we have $N_{\bH}(X, 1) = \{T, B\}$ and $N_{\bH}(X, 2) = \{L, R\}$.\label{fig:adjacent}}
\end{figure}

Further denote $N_{\bT}(A) \seteq N_{\bT}(A,1) \cup N_{\bT}(A,2)$.
Moreover, we define:
\begin{align}
   \alpha_{\bT}(A,i) &\seteq \max \left\{ \frac{\ell_{j}(A)}{\ell_{j}(B)}: B \in N_{\bT}(A,i), j \in \{1,2\} \right\}, \quad i \in \{1,2\} \nonumber \\
   \alpha_{\bT}(A) &\seteq \max_{i \in \{1,2\}} \alpha_{\bT}(A,i) \nonumber\\
   \alpha_{\bT} &\seteq \max \left\{ \alpha_{\bT}(A) : A \in \bT\right\} \nonumber\\
   L_{\bT} &\seteq \max \left\{ \ell_i(A) : A \in \bT, i \in \{1,2\}\right\}. \label{eq:def-L}
\end{align}
We take $\alpha_{\bT} \seteq 1$ if $\bT$ contains a single tile.
It is now straightforward to check that $\alpha_{\bT}$ bounds the degrees in $G(\bT)$.

\begin{lemma}
\label{lem:degree_bound}
   For a tiling $\bT$ and $A \in \bT$, it holds that
   \[
      \deg_{G(\bT)}(A) \leq 4(1+\alpha_{\bT}) \leq 8 \alpha_{\bT}.
   \]
\end{lemma}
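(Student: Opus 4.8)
The plan is to bound, for each of the four sides of the rectangle $A$, the number of tiles of $\bT$ meeting $A$ along that side by $1+\alpha_{\bT}$, and then to add the four estimates. First observe that the two sets $N_{\bT}(A,1)$ and $N_{\bT}(A,2)$ are disjoint: a rectangle $B\neq A$ can share a one‑dimensional portion of the boundary of $A$ along at most one coordinate direction, since sharing positive‑length pieces of a horizontal and of a vertical side of $A$ would force $B$ to overlap the interior of $A$. Hence $\deg_{G(\bT)}(A)=|N_{\bT}(A,1)|+|N_{\bT}(A,2)|$, and it suffices to count the neighbours of $A$ on each individual side; by the symmetry exchanging the two coordinate axes it is enough to treat one such side.

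So fix, say, the right side $S$ of $A$, a vertical segment of length $\ell_2(A)$ lying on a vertical line $\ell$, and let $B_1,\dots,B_k$ be the tiles meeting $A$ along $S$, listed from bottom to top. The left side of each $B_m$ lies on $\ell$, these left sides have pairwise disjoint interiors, and each $B_m$ meets $S$ in a subsegment $O_m\subseteq S$ of positive length. If $S$ lies on the boundary of the unit square then $k=0$; otherwise, since $\bT$ tiles the unit square, the segments $O_1,\dots,O_k$ partition $S$, so $\sum_{m=1}^{k}|O_m|=\ell_2(A)$. The key observation is that for every interior index $1<m<k$ the left side of $B_m$ coincides with $O_m$: if it extended above $O_m$ then $B_{m+1}$ could not reach $S$, and symmetrically it cannot extend below $O_m$. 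Hence $|O_m|=\ell_2(B_m)$ for $1<m<k$, and the $j=2$ term in the definition of $\alpha_{\bT}(A,2)$ gives $\ell_2(B_m)\ge \ell_2(A)/\alpha_{\bT}(A,2)\ge \ell_2(A)/\alpha_{\bT}$. Summing over the interior indices,
\[
   (k-2)\,\frac{\ell_2(A)}{\alpha_{\bT}}\;\le\;\sum_{m=2}^{k-1}|O_m|\;=\;\ell_2(A)-|O_1|-|O_k|\;<\;\ell_2(A),
\]
which forces $k\le 1+\alpha_{\bT}$. Running the same argument on the other three sides and adding yields $\deg_{G(\bT)}(A)\le 4(1+\alpha_{\bT})$. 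Finally $\alpha_{\bT}\ge 1$: in any tiling with at least two tiles the widest tile is at least as wide as each of its neighbours, contributing a ratio $\ge 1$ to $\alpha_{\bT}$ (and the single‑tile case is covered by the convention $\alpha_{\bT}\seteq 1$). Therefore $4(1+\alpha_{\bT})\le 8\alpha_{\bT}$.

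This is essentially a one‑dimensional packing estimate, so there is no serious obstacle; the one point that needs care is the handling of the two extreme tiles $B_1,B_k$ on each side, which may overhang past the corners of $A$ and so meet $S$ in only a short segment. That is precisely why the endpoint terms $|O_1|,|O_k|$ are isolated above and bounded merely by positivity, while the interior tiles $B_2,\dots,B_{k-1}$ carry the length budget of $S$. One should also be careful to verify the disjointness of $N_{\bT}(A,1)$ and $N_{\bT}(A,2)$ invoked at the start, so that the four one‑sided counts genuinely add rather than double‑count a tile.
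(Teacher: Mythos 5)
Your decomposition of the degree into four one-sided counts $k_1,\dots,k_4$ (justified by the disjointness of $N_{\bT}(A,1)$ and $N_{\bT}(A,2)$) and your identification of the interior tiles with intersection length equal to their own height are both fine, and match the spirit of the paper's argument. The problem is the per-side conclusion. Your chain of inequalities gives
\[
   \frac{k-2}{\alpha_{\bT}} < 1, \qquad\text{i.e.}\qquad k < 2+\alpha_{\bT},
\]
and you then assert that this ``forces $k \le 1+\alpha_{\bT}$.'' That step is invalid whenever $\alpha_{\bT}$ is not an integer. For instance, with $\ell_2(A)=1$ and $\alpha_{\bT} = 3/2$, the right side can carry $k=3$ tiles: one interior tile of height $2/3$ flanked by two overhanging tiles, each of height $2/3$ but meeting $S$ in segments of length $1/6$. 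Then $k = 3 > 5/2 = 1+\alpha_{\bT}$. Summing the correct per-side bound $k < 2+\alpha_{\bT}$ over the four sides gives only $\deg_{G(\bT)}(A) < 8 + 4\alpha_{\bT}$, which exceeds $4(1+\alpha_{\bT})$ by an additive $4$ and also exceeds $8\alpha_{\bT}$ whenever $\alpha_{\bT} < 2$, so the lemma does not follow.

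What is missing is a \emph{global} accounting of the overhanging tiles, which is what the paper's phrase ``after accounting for the four corners of $A$'' is doing. The relevant fact is that each of the four corners of $A$ can be touched by at most one overhanging tile: if a tile overhangs (say) the top-right corner from the top side and another overhangs the same corner from the right side, then both would occupy a nonempty open region just northeast of that corner, so their interiors overlap, which is impossible in a tiling. Hence the total number of overhanging tiles over all four sides is at most $4$, not $8$. Combining this with your one-dimensional packing bound --- each side has at most $\alpha_{\bT}$ non-overhanging tiles, since their disjoint intersection segments each have length at least $\ell_i(A)/\alpha_{\bT}$ inside a side of length $\ell_i(A)$ --- yields $\deg_{G(\bT)}(A) \le 4 + 4\alpha_{\bT}$, as required. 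So the fix is to bound $k_j \le o_j + \alpha_{\bT}$ where $o_j \in \{0,1,2\}$ is the number of overhanging tiles on side $j$, and then use $\sum_j o_j \le 4$ rather than trying to bound each $k_j$ by $1+\alpha_{\bT}$ separately.
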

\begin{proof}
   After accounting for the four corners of $A$, every other tile $B \in N_{\bT}(A,i)$
   intersects $A$ in a segment of length at least $\ell_i(B) \geq \ell_i(A)/\alpha_{\bT}$.
   The second inequality follows from $\alpha_{\bT} \geq 1$.
\end{proof}

\begin{lemma}
\label{lem:growth_upper_bound}
Consider $\bS,\bT \in \cT$ and let $G = G(\bS \tileprod \bT)$.
Then for any $X \in \bS \tileprod \bT$, it holds that
\begin{equation}\label{eq:growth}
      |B_G(X,1/(\alpha^4_{\bS} L_{\bT}))| \leq 192 \alpha^2_{\bS} |\bT|.
\end{equation}
\end{lemma}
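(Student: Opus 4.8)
The plan is to control how far a ball of the stated radius can reach inside the product tiling $\bS \tileprod \bT$. Recall that $\bS \tileprod \bT$ is obtained by replacing each tile $A \in \bS$ by a scaled copy $\bT_A$ of $\bT$, where the scaling in direction $i$ is by $\ell_i(A)$. So each tile $R \in \bS \tileprod \bT$ lives inside a unique ``super-tile'' $A \in \bS$, and within $\bT_A$ it has dimensions $\ell_i(R) = \ell_i(A)\ell_i(B)$ for the corresponding $B \in \bT$. The key geometric estimate I would isolate first: every edge of $G(\bS\tileprod\bT)$ that lies strictly inside some $\bT_A$ (i.e. connects two tiles of the same super-tile) has ``length'' — meaning the dimension of the shorter incident tile in the relevant direction — at least $\min_{A\in\bS,\,B\in\bT}\ell_i(A)\ell_i(B) \ge (\min_{A}\ell_i(A)) \cdot (\text{something})$; and, more usefully, the smallest tile dimension appearing anywhere in $\bS\tileprod\bT$ is at least $(1/\alpha_{\bS}) \cdot \ell_{\min}(\bS) \cdot (\text{tile dimensions of }\bT)$. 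The role of $\alpha_{\bS}$ is that neighboring tiles of $\bS$ have comparable side lengths up to a factor $\alpha_{\bS}$, so one can lower-bound the smallest side length appearing in $\bS$ (restricted to a neighborhood) in terms of the side length of any single tile; this is what produces the $\alpha_\bS^4$ in the radius.

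The main steps, in order. First, observe that a path in $G(\bS\tileprod\bT)$ starting at $X$ either stays within the super-tile $A$ containing $X$, or it must cross from $\bT_A$ into a neighboring super-tile $\bT_{A'}$ with $A'\in N_{\bS}(A)$. Second, I claim: as long as the path has combinatorial length at most $1/(\alpha_\bS^4 L_{\bT})$, its Euclidean diameter is at most, say, $\min_i \ell_i(A)$ — this is because each step moves by at most the width of the larger incident tile, which in $\bT_{A}$ or an adjacent $\bT_{A'}$ is at most $\alpha_\bS \cdot \ell_i(A) \cdot L_{\bT}$ (the $\alpha_\bS$ accounting for the super-tile side ratio, the $L_\bT$ for the largest side of $\bT$), and multiplying by the number of steps gives something like $\alpha_\bS^{-3} \ell_i(A) \le \ell_i(A)$. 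Hence the ball $B_G(X, 1/(\alpha_\bS^4 L_\bT))$ is contained, Euclidean-geometrically, in a region of diameter $O(\min_i \ell_i(A))$ around $A$, which can therefore meet at most $O(\alpha_\bS^2)$ of the super-tiles $A' \in \bS$ (since those super-tiles have side lengths within $\alpha_\bS$ of $\ell_i(A)$, an area/packing argument bounds how many fit in a bounded multiple of $A$). Third, within each such super-tile there are exactly $|\bT|$ tiles, so $|B_G(X,1/(\alpha_\bS^4 L_\bT))| \le O(\alpha_\bS^2)\cdot |\bT|$, and one tracks the constants to get $192$.

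The main obstacle I expect is the second step: carefully converting the combinatorial step-count bound into a Euclidean displacement bound, uniformly over where in $\bS\tileprod\bT$ the path travels. The subtlety is that a single combinatorial step can cross a ``large'' tile, and tiles of very different sizes can be adjacent — but only up to the factor $\alpha_\bS$ at the super-tile level and up to the intrinsic geometry of $\bT$ at the fine level, and $L_\bT \ge \ell_i(B)$ for all $B\in\bT$ is exactly what caps the fine-level step size by $\ell_i(A) L_\bT$. One has to be careful that once the path leaves $A$ into a neighbor $A'$, the relevant side length $\ell_i(A')$ could be as large as $\alpha_\bS \ell_i(A)$, so a step there costs up to $\alpha_\bS \ell_i(A) L_\bT$; if the path wanders two super-tiles away one picks up another factor $\alpha_\bS$, hence the cumulative bound uses $\alpha_\bS$ raised to a small power and this is why the radius is taken as $1/(\alpha_\bS^4 L_\bT)$ rather than the naive $1/L_\bT$. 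Once the displacement bound is in hand, the packing count of super-tiles and the exact count $|\bT|$ per super-tile are routine, and the factor $192$ comes from being slightly generous in the packing estimate combined with boundary effects.
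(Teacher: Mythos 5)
Your overall strategy is in the same spirit as the paper's: show that a ball of the stated combinatorial radius cannot escape a bounded set of super-tiles of $\bS$, bound the number of such super-tiles by $O(\alpha_\bS^2)$, and multiply by $|\bT|$. The difference is in how you get there, and your key step has a gap that you yourself flag but do not close.

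The problem is the bootstrap in your ``second step.'' You want to argue that a path of combinatorial length $\le 1/(\alpha_\bS^4 L_\bT)$ has Euclidean diameter $\lesssim \min_i \ell_i(A)$, by bounding each step by $\alpha_\bS \,\ell_i(A) L_\bT$. But that per-step bound is only valid while the path stays in $A$ and its immediate neighbors; if the path has already drifted two super-tiles away, a step can be as large as $\alpha_\bS^2 \ell_i(A) L_\bT$, and in general the step size grows with the number of super-tile boundaries already crossed. So to control the step sizes you need to already know the path stays near $A$, which is what you are trying to prove. You note this (``if the path wanders two super-tiles away one picks up another factor $\alpha_\bS$'') but the resolution is asserted rather than established; as written the argument could, in principle, run away.

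The paper closes this loop by declaring the target region in advance: it sets $\tilde{N}_\bS(\hat X)$ to be the super-tiles reachable from $\hat X = A$ by at most one edge in each axis direction, so that every super-tile in $\tilde N_\bS(\hat X)$ has side lengths within a factor $\alpha_\bS^2$ of those of $\hat X$, uniformly and \emph{before} tracking the path. Then two facts are played against each other: (i) while the path remains in $\region{\tilde N_\bS(\hat X)}$ every step has length at most $L_\bT \alpha_\bS^2 \ell_i(\hat X)$; and (ii) to exit $\region{\tilde N_\bS(\hat X)}$ the path must travel Euclidean distance at least $\ell_{i'}(\hat X)/\alpha_\bS^2$ in some direction $i'$, since $\tilde N_\bS(\hat X)$ contains the Minkowski neighborhood $\hat X + [-\ell_1(\hat X)/\alpha_\bS^2,\ell_1(\hat X)/\alpha_\bS^2]\times[-\ell_2(\hat X)/\alpha_\bS^2,\ell_2(\hat X)/\alpha_\bS^2]$. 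Dividing gives that the first escape time is at least $1/(\alpha_\bS^4 L_\bT)$, which is exactly the radius in the statement. Note also that this is where the exponent $4$ really comes from ($\alpha_\bS^2$ from the size of tiles inside $\tilde N$ times $\alpha_\bS^2$ from the exit distance), whereas your sketch uses a single factor $\alpha_\bS$ for the step bound, so your constants do not quite match the lemma either. Finally, the paper bounds $|\tilde N_\bS(\hat X)| \le 3(\max\deg_{G(\bS)})^2 \le 3(8\alpha_\bS)^2 = 192\alpha_\bS^2$ purely combinatorially via \pref{lem:degree_bound}; your packing argument would give a bound of the same shape, but the graph-theoretic count is cleaner and is what produces the exact constant $192$. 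To repair your proof, replace the displacement estimate with the fixed-target escape-time argument: choose $\tilde N_\bS(\hat X)$ first, establish the Minkowski containment, and only then compare step lengths to the exit distance.
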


\begin{proof}
   For a tile $Y \in \bS \tileprod \bT$, let $\hat{Y} \in \bS$ denote
   the unique tile for which $Y \subseteq \hat{Y}$.
   Let us also define
   \[
      \tilde{N}_{\bS}(\hat{X}) \seteq \{\hat{X}\} \cup N_{\bS}(\hat{X}) \cup N_{\bS}\left(N_{\bS}(\hat{X},1),2\right) \cup N_{\bS}\left(N_{\bS}(\hat{X},2),1\right),
   \]
   which is the set of vertices of $G(\bS)$ that can be reached from $\hat{X}$ by following at most one edge in each direction.

   We will show that
   \begin{equation}\label{eq:regions}
      \region{B_G(X,1/(\alpha^4_{\bS} L_{\bT}))}  \subseteq \region{\tilde{N}_{\bS}(\hat{X})}.
   \end{equation}
   It follows that 
   \[
      |B_G(X,1/(\alpha^4_{\bS} L_{\bT}))| \leq |\bT| \cdot |\tilde{N}_{\bS}(\hat{X})| \leq |\bT| \cdot  3\left(\max_{A \in \bS} \deg_{G(\bS)}(A)\right)^2,
   \]
   and then \eqref{eq:growth} follows from \pref{lem:degree_bound}.

   To establish \eqref{eq:regions}, consider any path $\llangle X=X_0,X_1,X_2,\ldots,X_h\rrangle$ in $G$
   with $\hat{X}_h \notin \tilde{N}_{\bS}(\hat{X})$.
   Let $k \leq h$ be the smallest index for which $\hat{X}_k \notin \tilde{N}_{\bS}(\hat{X})$.
   Then:
   \begin{align}
      &{X_0},{X_1},\ldots,{X_{k-1}} \subseteq \region{\tilde N_{\bS}(\hat{X})}\label{eq:gp1} \\
      &X_{k-1} \cap \left(\partial \region{\tilde N_{\bS}(\hat{X})} \cap (0,1)^2\right) \neq \emptyset.\label{eq:gp2}
   \end{align}
   Now \eqref{eq:gp1} implies that
   \begin{equation}\label{eq:gp3}
      \ell_i(X_j) \leq L_{\bT} \ell_i(\hat{X}_j) \leq L_{\bT} \alpha^2_{\bS} \ell_i(\hat{X}),\qquad j \leq k-1, i \in \{1,2\}.
   \end{equation}
   And \eqref{eq:gp2} shows that for some $i' \in \{1,2\}$,
   \begin{equation}\label{eq:gp4}
      \sum_{j=0}^{k-1} \ell_{i'}(X_j) \geq \min \left\{ \ell_{i'}(Y) : Y \in \tilde{N}_{\bm{S}}(\hat{X}) \right\} \geq \ell_{i'}(\hat{X})/\alpha^2_{\bS}.
   \end{equation} 
   To clarify why this is true, note that 
   \[
\hat{X} + \left[-\ell_1(\hat{X})/\alpha^2_{\bS}, \ell_1(\hat{X})/\alpha^2_{\bS}\right] \times \left[-\ell_2(\hat{X})/\alpha_{\bS}^2, \ell_2(\hat{X})/\alpha^2_{\bS}\right] \subseteq \region{\tilde{N}_{\bS}(\hat{X})},
   \]
   where '$+$' here is the Minkowski sum $R+S \seteq \{ r + s : r \in R, s \in S \}$.
   Indeed, this inclusion motivates our definition of the ``$\ell_{\infty}$ neighborhood'' $\tilde{N}$ above.

   Combining \eqref{eq:gp3} and \eqref{eq:gp4} now gives %
   \[
      h \geq k \geq \frac{1}{\alpha_{\bS}^4 L_{\bT}},
   \]
   completing the proof.
\end{proof}

We can now finish the analysis of the volume growth in the graphs $\{\bH^n : n \geq 0\}$. 
\begin{lemma} 
\label{lem:example_Delta_bound}
   For $n \geq 1$, it holds that
      $\alpha_{\bH^n} \leq 2$. 
\end{lemma}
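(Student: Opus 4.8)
The plan is to prove, by induction on $n$, a sharper structural statement about the tilings $\bH^n$, from which the bound on $\alpha_{\bH^n}$ is immediate. Call a tiling of $[0,1]^2$ \emph{columnar of width $w$} if each of its tiles is contained in one of the vertical strips $C_j \seteq [(j-1)w, jw]\times[0,1]$, all tiles lying in a fixed $C_j$ have a common height, and those tiles together tile $C_j$. Recall from \pref{fig:tiling} that $\bH$ is columnar of width $\tfrac13$: its two outer strips each carry three tiles of height $\tfrac13$, and the middle strip carries six tiles of height $\tfrac16$ (so $|\bH|=12$, all widths equal $\tfrac13$, and the successive column heights are $\tfrac13,\tfrac16,\tfrac13$). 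I would show: for every $n\ge 1$, the tiling $\bH^n$ is columnar of width $3^{-n}$, with $3^n$ strips, and if $h^{(n)}_1,\dots,h^{(n)}_{3^n}$ denote the common heights of tiles in the successive strips, then $\tfrac12 \le h^{(n)}_j/h^{(n)}_{j+1}\le 2$ for all $j$.

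Granting this, the lemma is bookkeeping. All tiles of $\bH^n$ have width $3^{-n}$, so $\ell_1(A)/\ell_1(B)=1$ for every pair of tiles; and two tiles $A,B$ adjacent in $G(\bH^n)$ must lie either in a common strip $C_j$ — whence $\ell_2(A)=\ell_2(B)=h^{(n)}_j$ — or in consecutive strips $C_j,C_{j+1}$ — whence $\ell_2(A)/\ell_2(B)\in\{h^{(n)}_j/h^{(n)}_{j+1},\,h^{(n)}_{j+1}/h^{(n)}_j\}\subseteq[\tfrac12,2]$ — since tiles in non-consecutive strips share no one-dimensional boundary. Thus $\alpha_{\bH^n}(A,i)\le 2$ for all $A$ and $i$, i.e. $\alpha_{\bH^n}\le 2$.

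For the induction I would use $\bH^n = \bH^{n-1}\tileprod\bH$. The base case $n=1$ is the description of $\bH$ above. For the step, forming $\bH^{n-1}\tileprod\bH$ replaces each tile $A$ of $\bH^{n-1}$, lying in some strip $C'_c$ of $\bH^{n-1}$ (height $h^{(n-1)}_c$, width $3^{-(n-1)}$), by a copy of $\bH$ scaled to the dimensions of $A$; since $\bH$ is columnar into thirds, this subdivides $C'_c$ into three width-$3^{-n}$ strips, whose tiles have heights $\tfrac13 h^{(n-1)}_c$, $\tfrac16 h^{(n-1)}_c$, $\tfrac13 h^{(n-1)}_c$ (left, middle, right). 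Carrying this out for every $c$ presents $\bH^n$ as columnar of width $3^{-n}$ with $3^n$ strips. Two consecutive strips of $\bH^n$ are then of one of two types: both come from the same parent strip $C'_c$ (ratio $\tfrac{1/3}{1/6}=2$ or $\tfrac{1/6}{1/3}=\tfrac12$), or one is the right strip from $C'_c$ and the other the left strip from $C'_{c+1}$, with heights $\tfrac13 h^{(n-1)}_c$ and $\tfrac13 h^{(n-1)}_{c+1}$ and hence ratio $h^{(n-1)}_c/h^{(n-1)}_{c+1}\in[\tfrac12,2]$ by the inductive hypothesis. This closes the induction.

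The only delicate point is this last ``straddling'' case: the naive estimate $\alpha_{\bH^n}\le\alpha_{\bH^{n-1}}\cdot\alpha_{\bH}$ would grow geometrically, and what saves us is that the two outer strips of $\bH$ contribute the \emph{same} scale factor $\tfrac13$, so the straddling ratio of $\bH^n$ is literally an adjacent-column ratio of $\bH^{n-1}$ — no amplification. This is exactly why the inductive hypothesis must track the entire height sequence rather than merely the value $\alpha_{\bH^{n-1}}$; everything else is routine verification with the definition of the tiling product.
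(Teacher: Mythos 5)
Your proof is correct, and it does differ in organization from the paper's. The paper also argues by induction on horizontally adjacent heights, but decomposes in the other order, $\bH^n = \bH \tileprod \bH^{n-1}$: given horizontally adjacent $A,B \in \bH^n$ with parent tiles $\hat A, \hat B \in \bH$, either $\hat A = \hat B$ (and the inductive hypothesis $\alpha_{\bH^{n-1}} \leq 2$ applies inside that copy of $\bH^{n-1}$), or $\hat A, \hat B$ are horizontally adjacent in $\bH$, in which case $A$ lies on the boundary column of $\hat A$'s copy of $\bH^{n-1}$ and $B$ on the boundary column of $\hat B$'s; invoking the left--right symmetry of $\bH^{n-1}$ (so the left and right boundary columns have equal tile heights) gives $\ell_2(A)/\ell_2(B) = \ell_2(\hat A)/\ell_2(\hat B) \leq 2$. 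Both arguments thus rest on the same palindromic feature $\gamma_1 = \gamma_b = 3$; the paper exploits it as a reflection symmetry of the large factor $\bH^{n-1}$, while you exploit it at the small factor $\bH$ when decomposing as $\bH^{n-1} \tileprod \bH$. Your route carries a heavier inductive invariant (the full strip-height sequence with consecutive ratios in $[\tfrac12,2]$), which makes the ``no amplification in the straddling case'' phenomenon completely explicit and also subsumes the paper's auxiliary claim that vertically adjacent tiles have the same height; the paper's route keeps the inductive hypothesis at just $\alpha_{\bH^{n-1}} \leq 2$ plus a one-line symmetry observation, at the cost of leaving the columnar structure implicit. Both are sound, and \pref{lem:alphaHabn} later generalizes the paper's version using exactly the hypothesis $\gamma_1 = \gamma_b$, mirroring what your argument isolates.
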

\begin{proof}
Consider $A \in \bH^n$ and $B \in N_{\bH^n}(A)$.
First note that, as in the proof of \pref{lem:example_diam}, all the tiles in $\bH^n$ have the same width $3^{-n}$, and so $\ell_1(A) = \ell_1(B)$.
Moreover, one can easily verify that every two vertically adjacent tiles in $\bH^n$ have the same height, and so we have $\ell_2(A)=\ell_2(B)$ when $B \in N_{\bH^n}(A, 1)$. Now we prove by an induction on $n$ that for all horizontally adjacent tiles $A, B \in \bH^n$ we have 
\[
\frac{\ell_2(A)}{\ell_2(B)} \leq 2.
\]

The base case is clear for $n = 1$. For $n \geq 2$
Let us write $\bH^n = \bH \tileprod \bH^{n-1}$, and let $\hat{A},\hat{B} \in \bH$ be the unique tiles for which $A \subseteq \hat{A}$
   and $B \subseteq \hat{B}$.
   If $\hat{A}=\hat{B}$, then the claim follows from the induction hypothesis. Otherwise, as $B \in N_{\bH^n}(A, 2)$, it holds that $\hat{B} \in N_{\bH}(\hat{A}, 2)$ as well. By symmetry, the tiles touching the left and right edges of $\bH^{n-1}$ have the same height, and therefore it follows that 
	\[
	\frac{\ell_2(A)}{\ell_2(B)} \leq \frac{\ell_2(\hat{A})}{\ell_2(\hat{B})} \leq 2,
	\]
	completing the proof.
\end{proof}

\begin{lemma}
   \label{lem:growthHn}
   For any $n \geq 0$, it holds that
   \[
      \left|B_{G(\bH^n)}(A,r)\right| \asymp r^{\log_3(12)},\qquad \forall A \in \bH^n, 1 \leq r \leq \diam(G(\bH^n)).
   \]
\end{lemma}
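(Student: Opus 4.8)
The plan is to interpolate between the scales $3^{-n}$ (the width of a tile in $\bH^n$) and $1$ (the diameter of $\region{\bH^n}$) by writing $\bH^n = \bH^m \tileprod \bH^{n-m}$ for a suitable $m = m(r)$, and reading off both an upper bound and a lower bound on $|B_{G(\bH^n)}(A,r)|$ from the structural lemmas already proved. Recall from \pref{lem:example_diam} that $\diam(G(\bH^k)) \asymp 3^k$ and $|\bH^k| = 12^k$, and from \pref{lem:example_Delta_bound} that $\alpha_{\bH^k} \leq 2$ for all $k \geq 1$ (so in particular $L_{\bH^{n-m}} \asymp 3^{-(n-m)}$, since every tile of $\bH^{n-m}$ has width $3^{-(n-m)}$ and height bounded below by $3^{-(n-m)}/2^{n-m}$ is too weak — rather, one uses that heights are comparable across $\bH^{n-m}$ only through $\alpha$, but what we actually need is $L_{\bH^{n-m}} \leq 3^{-(n-m)} \cdot (\text{something})$; in fact all widths equal $3^{-(n-m)}$ and all heights are at most $1$, and the relevant estimate $L_{\bH^{n-m}} \asymp 3^{-(n-m)}$ needs a short separate check that the maximal tile height in $\bH^{k}$ is $\asymp 3^{-k}$, which follows by the same induction as in \pref{lem:example_diam}).

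Given $1 \leq r \leq \diam(G(\bH^n)) \asymp 3^n$, choose $m$ so that $3^{n-m} \asymp r$, i.e. $m \asymp n - \log_3 r$. For the \emph{upper bound}: apply \pref{lem:growth_upper_bound} with $\bS = \bH^m$ and $\bT = \bH^{n-m}$. Since $\alpha_{\bH^m} \leq 2$ and $L_{\bH^{n-m}} \asymp 3^{-(n-m)}$, the radius $1/(\alpha_{\bS}^4 L_{\bT}) \asymp 3^{n-m} \asymp r$, so the lemma gives $|B_{G(\bH^n)}(A, c r)| \leq 192 \cdot 4 \cdot |\bH^{n-m}| = C \cdot 12^{n-m} \asymp 12^{n-m}$. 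Now $12^{n-m} = (3^{n-m})^{\log_3 12} \asymp r^{\log_3 12}$, which is the desired upper bound — up to adjusting $r$ by the universal constant $c$, which is absorbed by a standard covering/doubling argument (a ball of radius $r$ is covered by boundedly many balls of radius $cr$ once we know the degree bound from \pref{lem:degree_bound}, or more simply one just reindexes $m$). For the \emph{lower bound}: apply \pref{lem:contract_growth_lower_bound} with the same splitting to get $|B_{G(\bH^n)}(A, \diam(G(\bH^{n-m})))| \geq |\bH^{n-m}| = 12^{n-m}$, and since $\diam(G(\bH^{n-m})) \leq 3^{n-m+1} \asymp 3^{n-m} \asymp r$, this gives $|B_{G(\bH^n)}(A,r)| \gtrsim 12^{n-m} \asymp r^{\log_3 12}$.

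The main obstacle I expect is not either individual bound but the bookkeeping at the two \emph{endpoints} of the range and the mismatch of constants between the radius $\diam(G(\bH^{n-m}))$ in the lower bound and $1/(\alpha^4_{\bS} L_{\bT})$ in the upper bound: one must check that for every $r$ in $[1, \diam(G(\bH^n))]$ there is an integer $m \in \{0,\dots,n\}$ making both $3^{n-m}$ comparable to $r$ simultaneously, and handle the small-$r$ regime ($r = O(1)$, where $|B| \asymp 1 \asymp r^{\log_3 12}$ trivially using that the graph has bounded degree by Lemmas~\ref{lem:degree_bound} and~\ref{lem:example_Delta_bound}) and the regime $r \asymp 3^n$ (where $m = 0$ and the claim is essentially \pref{lem:example_diam} together with \pref{lem:contract_growth_lower_bound}). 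The other point needing care is the auxiliary claim that the maximal tile dimension in $\bH^k$ is $\asymp 3^{-k}$ (so that $L_{\bH^{n-m}} \asymp 3^{-(n-m)}$); this is an easy induction on the structure of $\bH$ in \pref{fig:tiling}, parallel to the proof of \pref{lem:example_diam}, and I would state it as a one-line sublemma before the interpolation argument. Once these are in place, the doubling/covering step to replace $cr$ by $r$ is routine given the uniform degree bound.
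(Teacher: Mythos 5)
Your proposal is correct and takes essentially the same approach as the paper: write $\bH^n = \bH^{n-k}\tileprod\bH^k$ (your $m$ is the paper's $n-k$), invoke \pref{lem:contract_growth_lower_bound} with \pref{lem:example_diam} for the lower bound, and invoke \pref{lem:growth_upper_bound} with \pref{lem:example_Delta_bound} and the identity $L_{\bH^k}=3^{-k}$ for the upper bound. The paper states $L_{\bH^k}=3^{-k}$ without comment (the maximum tile height shrinks by exactly $1/3$ each level since the tallest tiles of $\bH$ have height $1/3$), and handles the constant mismatch between the radii $3^{k+1}$ and $3^k/16$ simply by monotonicity in $r$ and reindexing $k$, which is the simpler of the two fixes you mention; the digression about lower bounds on tile heights is not needed since $L_{\bT}$ is a maximum.
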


\begin{proof}
   Writing $\bH^n = \bH^{n-k} \tileprod \bH^{k}$ and employing \pref{lem:contract_growth_lower_bound} together with \pref{lem:example_diam}
   gives
   \[
      \left|B_{G(\bH^n)}(A, 3^{k+1})\right| \geq |\bH^k| = 12^k,\qquad \forall A \in \bH^n, k \in \{0,1,\ldots,n\}.
   \]
   The desired lower bound now follows using monotonicity of $|B_{G(H^n)}(A,r)|$ with respect to $r$.

   To prove the upper bound, first note that we have $L_{\bH^k} = 3^{-k}$ (recall the definition \eqref{eq:def-L}). Moreover, by \pref{lem:example_Delta_bound} we have $\alpha_{\bH^n} \leq 2$. Hence invoking \pref{lem:growth_upper_bound} with $\bS = \bH^{n-k}$ and $\bT = \bH^k$ gives
   \[
      \left|B_{G(\bH^n)}(A, 3^{k}/16)\right| \leq 768 \cdot 12^{k},\qquad \forall A \in \bH^n, k \in \{0,1,\ldots,n\},
   \]
   completing the proof.
\end{proof}

Finally, this allows us to establish a uniform polynomial growth rate for $\cH_{\infty}$.

\begin{lemma}
   \label{lem:Hn-uniform-growth}
   It holds that
   \[
      |B_{\cH_{\infty}}(v,r)| \asymp r^{\log_3(12)}\qquad \forall v \in V(\cH_{\infty}), r \geq 1.
   \]
\end{lemma}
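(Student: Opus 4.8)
The plan is to transfer the internal ball estimate of \pref{lem:growthHn} to the concatenated limit $\cH_\infty$, using the block structure. Write $G_j \seteq G(\bH^j)$, and recall that the vertex set of $\cH_\infty$ is the disjoint union of ``blocks'' $\bH^0,\bH^1,\bH^2,\ldots$, where $\bH^0=\{\rho\}$, block $\bH^j$ lives on the unit square $[j,j+1]\times[0,1]$, and $|\bH^j|=12^j$ with $\diam(G_j)\in[3^j,3^{j+1}]$ by \pref{lem:example_diam}. As observed in the proof of that lemma every tile of $\bH^j$ has width exactly $3^{-j}$, and as used in the proof of \pref{lem:growthHn} we have $L_{\bH^j}=3^{-j}$, so every tile of $\bH^j$ sits inside a $3^{-j}\times 3^{-j}$ square. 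Two consequences: (i) any path in $\cH_\infty$ that enters $\bH^j$ through one of its two vertical sides and leaves through the other has length at least $3^j$ (each step changes the $x$-coordinate by at most $3^{-j}$ while inside $\bH^j$), so $\bH^j$ is a bottleneck separating $\bH^{<j}$ from $\bH^{>j}$ and ``crossing'' it costs $\asymp 3^j$; (ii) the tiles of $\bH^j$ touching a fixed vertical side are exactly $3^j$ squares of side $3^{-j}$ stacked to fill the unit segment, and each such square meets at most three consecutive boundary squares of the adjacent block $\bH^{j\pm1}$ (a three-to-one correspondence). By \pref{lem:degree_bound}, \pref{lem:example_Delta_bound}, and (ii), $\cH_\infty$ has uniformly bounded degree. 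Fix $v\in\bH^k$ and $r\ge 1$; we estimate $|B_{\cH_\infty}(v,r)|$ from both sides.

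The technical core, which I expect to be the main obstacle, is the following \emph{metric-comparison estimate}: there is an absolute constant $C_0$ such that $d_{G_j}(x,y)\le C_0\, d_{\cH_\infty}(x,y)$ for every $j$ and all $x,y\in\bH^j$ (the reverse $d_{\cH_\infty}\le d_{G_j}$ is automatic since $G_j\subseteq\cH_\infty$). I would prove it by surgery on a $d_{\cH_\infty}$-geodesic $P$ from $x$ to $y$: cut $P$ into the maximal sub-paths inside $\bH^j$ and the ``excursions'' that leave $\bH^j$. Since $\bH^{<j}$ and $\bH^{>j}$ lie in different components of $\cH_\infty\setminus\bH^j$, each excursion lies entirely in one of them, hence rejoins $\bH^j$ through the \emph{same} vertical side, so its two endpoints on $\bH^j$ lie on one boundary column, at column-positions $p$ and $q$ out of $3^j$. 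If the excursion goes right, it connects two boundary squares of $\bH^{j+1}$ whose positions out of $3^{j+1}$ differ by at least $3|p-q|-O(1)$, and since every tile it visits has height at most $3^{-(j+1)}$ while the vertical separation of those squares is at least $(3|p-q|-O(1))\,3^{-(j+1)}$, the excursion has length at least $|p-q|$. If it goes left, the analogous estimate using $\bH^{j-1}$'s boundary squares of side $3^{-(j-1)}$ gives length at least $\tfrac13|p-q|-O(1)$. In either case, replacing the excursion by the direct path of length $|p-q|$ along $\bH^j$'s boundary column inflates by at most a factor $3$ (up to an additive constant absorbed into $C_0$); doing this for all excursions produces a walk from $x$ to $y$ inside $\bH^j$ of length at most $C_0\,\ell(P)$.

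For the lower bound: if $r\le\diam(G_k)$, then $B_{\cH_\infty}(v,r)\supseteq B_{G_k}(v,r)$ and \pref{lem:growthHn} gives $|B_{\cH_\infty}(v,r)|\gtrsim r^{\log_3 12}$ directly. If $r>\diam(G_k)$ (so $r>3^k$), set $k'\seteq\max(k,\lfloor\log_3 r\rfloor-3)$, so $3^{k'}\asymp r$; walking rightward along the chain of blocks one reaches a vertex $v'\in\bH^{k'}$ with $d_{\cH_\infty}(v,v')\le\sum_{j=k}^{k'-1}(\diam(G_j)+1)\le r/2$, and then $|B_{\cH_\infty}(v,r)|\ge\bigl|B_{G_{k'}}\bigl(v',\min(r/2,\diam(G_{k'}))\bigr)\bigr|\gtrsim r^{\log_3 12}$ by \pref{lem:growthHn}, since $\min(r/2,\diam(G_{k'}))\asymp r$. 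The small-$r$ regime, where $r$ lies below an absolute constant, is trivial.

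For the upper bound, note that the set of blocks met by $B_{\cH_\infty}(v,r)$ is a contiguous interval containing $k$ (reaching $\bH^j$ from $\bH^k$ forces passing through every block in between, each a cut), and by (i) any met block $\bH^j$ with $j\notin\{k-1,k,k+1\}$ satisfies $3^j\le 6r$ (reaching it forces crossing some block of index $\ge k+1$ or $\le k-1$, at cost $\ge 3^{k+1}$ or $\ge 3^{k-1}$). For each met block $\bH^j$ with $j\in\{k-1,k,k+1\}$, pick $w_j\in B_{\cH_\infty}(v,r)\cap\bH^j$; then $B_{\cH_\infty}(v,r)\cap\bH^j\subseteq B_{\cH_\infty}(w_j,2r)\cap\bH^j\subseteq B_{G_j}(w_j,2C_0 r)$ by the metric-comparison estimate, and since $\diam(G_j)\ge 3^j$ this is $\lesssim r^{\log_3 12}$ by \pref{lem:growthHn} when $3^j>2C_0 r$, and $\le 12^j\lesssim r^{\log_3 12}$ otherwise. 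All remaining met blocks satisfy $3^j\le 6r$, so $\sum_{j\,:\,3^j\le 6r}12^j\le\tfrac{12}{11}(6r)^{\log_3 12}\lesssim r^{\log_3 12}$; adding the at most three special contributions gives $|B_{\cH_\infty}(v,r)|\lesssim r^{\log_3 12}$. All constants above are uniform in $v$, $r$, and the block index (and, if one prefers, for fixed $(v,r)$ one has $B_{\cH_\infty}(v,r)=B_{\cH_n}(v,r)$ once $3^n>r$, since no $d_{\cH_\infty}$-geodesic of length $\le r$ from $v$ can reach $\bH^{>n}$), so the estimate $|B_{\cH_\infty}(v,r)|\asymp r^{\log_3 12}$ follows.
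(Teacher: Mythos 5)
Your proof is correct, and in outline it follows the same block decomposition as the paper's, but you add an ingredient the paper never states: the metric-comparison estimate $d_{G(\bH^j)}(x,y)\le C_0\,d_{\cH_\infty}(x,y)$ on each block, proved by surgery on excursions. The paper's proof is a four-line case analysis: for the lower bound it uses $B_{G(\bH^n)}(v,r)\subseteq B_{\cH_\infty}(v,r)$ for $r\lesssim 3^n$ and the fact that the whole block $\bH^k$ is swallowed once $r\ge 3^{k+3}$, $k\ge n$ (your ``walk to a block with $3^{k'}\asymp r$'' step is the same estimate); for the upper bound it notes that for $r\le 3^{n-1}$ the ball meets only $\bH^{n-1}\cup\bH^n\cup\bH^{n+1}$ and bounds each intersection by citing \pref{lem:growthHn}, and for larger $r$ it sums $|\bH^j|$ over the reachable blocks, exactly as in your tail sum $\sum_{3^j\lesssim r}12^j$. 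The one place where the two proofs genuinely differ is the per-block upper bound: $B_{\cH_\infty}(v,r)\cap\bH^j$ is not literally a ball of $G(\bH^j)$, since a detour through the coarser block $\bH^{j-1}$ can shorten intra-block distances (by a factor of about $3$), so applying \pref{lem:growthHn} there needs either your comparison lemma or the observation that for $r\le 3^{n-1}$ the relevant induced subgraph is $G\bigl((\bI\tilecat\bH\tilecat\bH^2)\tileprod\bH^{n-1}\bigr)$, to which the argument of \pref{lem:growth_upper_bound} applies. The paper leaves this implicit; your write-up makes it explicit and rigorous, at the cost of length.

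One small patch in the surgery argument: for a left excursion you invoke ``the analogous estimate,'' which presupposes that every tile visited has height at most $3^{-(j-1)}$; this fails if the excursion penetrates to $\bH^{j-2}$ or beyond, where tiles are taller. But such an excursion must cross $\bH^{j-1}$ horizontally, hence already has length at least $3^{j-1}-1\ge |p-q|/3-O(1)$, so the stated conclusion stands --- add that sentence. (Also, the closing parenthetical $B_{\cH_\infty}(v,r)=B_{\cH_n}(v,r)$ for $3^n>r$ additionally needs $n$ at least $k+2$, but you never use it.)
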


\begin{proof}
   Recall first the natural identification $\bH^n \hookrightarrow V(\cH_{\infty})$
   under which $V(\cH_{\infty}) = \bigcup_{n \geq 0} \bH^n$ is a partition.
   Consider $v \in V(\cH_{\infty})$ and let $n \geq 0$ be such that $v \in\bH^n$.
   Now \pref{lem:growthHn} in conjunction with \pref{lem:example_diam} yields the bounds:
   \begin{align*}
      |B_{\cH_{\infty}}(v,r)| &\geq |B_{\cH_{\infty}}(v,r) \cap \bH^n| \gtrsim r^{\log_3(12)} && r \leq 3^{n+3} \\
      |B_{\cH_{\infty}}(v,r)| &\geq |\bH^k| = 12^k \gtrsim r^{\log_3(12)} && r \in [3^{k+3},3^{k+4}), k \geq n \\
      |B_{\cH_{\infty}}(v,r)| &=
      |B_{\cH_{\infty}}(v,r) \cap \bH^{n-1}| + 
      |B_{\cH_{\infty}}(v,r) \cap \bH^n| + 
      |B_{\cH_{\infty}}(v,r) \cap \bH^{n+1}| \\
                              &\lesssim r^{\log_3(12)} && r \leq 3^{n-1} \\
      |B_{\cH_{\infty}}(v,r)| &\leq \sum_{j \leq \max(k,n+1)} |\bH^j| \leq 2\cdot 12^{\max(k,n+1)} \lesssim r^{\log_3(12)} && r \in [3^{k-1},3^k), k \geq n.
   \end{align*}
   These four bounds together verify the desired claim.
\end{proof}

\subsection{Effective resistances}
\label{sec:resist}

Consider a weighted, undirected graph $G=(V,E,c)$ with edge conductances $c :E \to \R_+$.
For $p \geq 1$,
denote $\ell_p(V) \seteq \{ f : V \to \R \mid \sum_{u \in V} |f(u)|^p < \infty \}$,
and equip $\ell_2(V)$ with the inner product
$\langle f,g\rangle = \sum_{u \in V} f(u) g(u)$.

For $s,t \in \ell_1(V)$ with $\|s\|_1=\|t\|_1$,
we define the {\em effective resistance}
\[
   \reff^G(s,t) \seteq \llangle s-t, L_G^{\dagger} (s-t)\rrangle,
\]
where $L_G$ is the combinatorial Laplacian of $G$, and $L_G^{\dagger}$ is the Moore-Penrose pseudoinverse.
Here, $L_G$ is the operator on $\ell_2(V)$ defined by
\[
   L_G f(v) = \sum_{u : \{u,v\} \in E} c(\{u,v\}) \left(f(v)-f(u)\right).
\]
If $G$ is unweighted, we assume it is equipped with unit conductances $c \equiv \1_{E(G)}$.

Equivalently, if we consider mappings $\theta : E \to \R$, and define the energy functional
\[
   \cE_G(\theta) \seteq \sum_{e \in E} c(e)^{-1} \theta(e)^2,
\]
then $\reff^G(s,t)$ is the minimum energy of a flow with demands $s-t$.
(See, for instance, \cite[Ch. 2]{lp:book}.)
For two finite sets $A,B \subseteq V$ in a graph, we define
\[
   \reff^G(A \leftrightarrow B) \seteq \inf \left\{ \reff^G(s,t) : \supp(s) \subseteq A, \supp(t) \subseteq B, s,t \in \ell_1(V), \|s\|_1=\|t\|_1=1\right\},
\]
and we recall the following standard characterization (see, e.g., \cite[Thm. 2.3]{lp:book}).

If we define additionally $c_v \seteq \sum_{u \in V(L)} c(\{u,v\})$ for $v \in V$,
then we can recall that weighted random walk $\{X_t\}$ on $G$ with Markovian law
\[
   \Pr\left[X_{t+1}=v \mid X_t=u\right] = \frac{c(\{u,v\})}{c_u},\qquad u,v \in V.
\]

\begin{theorem}[Transience criterion]\label{thm:bdd-energy}
   A weighted graph $G=(V,E,c)$ is transient if and only if there is a vertex $v \in V$
   and an increasing sequence $V_1 \subseteq \cdots \subseteq V_n \subseteq V_{n+1} \subseteq \cdots$
   of finite subsets of vertices satisfying $\bigcup_{n \geq 1} V_n = V$
   and 
   \[
      \sup_{n \geq 1} \reff^G\left(\{v\} \leftrightarrow V \setminus V_n\right) < \infty.
   \]
\end{theorem}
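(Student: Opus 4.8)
The plan is to derive the statement from the classical flow characterization of transience due to T.~Lyons (see, e.g., \cite[Ch.~2]{lp:book}): the network $G=(V,E,c)$ is transient if and only if there exist a vertex $v$ and an edge function $\theta\colon E\to\R$ with $\operatorname{div}\theta=\delta_v$ --- a \emph{unit flow from $v$ to infinity} --- and $\cE_G(\theta)<\infty$. To connect this to effective resistances across annuli I would use two standard facts. First, the min-energy description of $\reff$ recalled above: $\reff^G(\{v\}\leftrightarrow V\setminus V_n)$ is the least energy of a flow whose divergence is $\delta_v$ at every vertex of $V_n$. Second, the gluing identity $\reff^G(\{v\}\leftrightarrow V\setminus V_n)=\reff^{\tilde G_n}(v,z_n)$, where $\tilde G_n$ is the finite multigraph obtained from $G$ by identifying $V\setminus V_n$ to a single vertex $z_n$ (keeping all conductances), the right side being attained by the unique electrical current in $\tilde G_n$ from $v$ to $z_n$. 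I would also note the elementary monotonicity: shrinking the target set only enlarges the infimum, so $\reff^G(\{v\}\leftrightarrow V\setminus V_n)$ is nondecreasing in $n$ and the supremum coincides with $\lim_n\reff^G(\{v\}\leftrightarrow V\setminus V_n)$.

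For the ``only if'' direction, assume $G$ is transient, fix any $v$, and set $V_n=B_G(v,n)$. Choosing a unit flow $\theta$ from $v$ to infinity with $E:=\cE_G(\theta)<\infty$ and pushing it forward under the identification to $\tilde G_n$ (discarding self-loops at $z_n$) produces an edge function whose divergence is $\delta_v$ at $v$ and $0$ at all other vertices of $V_n$, hence $-1$ at $z_n$ by conservation; this is a unit $v$--$z_n$ flow of energy at most $E$. Thus $\reff^G(\{v\}\leftrightarrow V\setminus V_n)=\reff^{\tilde G_n}(v,z_n)\le E$ for every $n$, and the supremum is finite.

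For the ``if'' direction, suppose $v$ and a finite exhaustion $\{V_n\}$ are given with $M:=\sup_n\reff^G(\{v\}\leftrightarrow V\setminus V_n)<\infty$. Let $\theta_n$ be the electrical current in $\tilde G_n$ from $v$ to $z_n$ and let $\hat\theta_n$ be its pullback to $G$ (each boundary edge of $V_n$ inherits the current on the corresponding parallel edge at $z_n$, all other edges outside $V_n$ get $0$), so $\cE_G(\hat\theta_n)=\reff^{\tilde G_n}(v,z_n)\le M$ and $\operatorname{div}\hat\theta_n(w)=\delta_v(w)$ for all $w\in V_n$. Viewing $\{\hat\theta_n\}$ as a sequence of norm at most $\sqrt M$ in the weighted Hilbert space of edge functions with norm-square $\cE_G(\cdot)$, extract a weakly convergent subsequence $\hat\theta_n\rightharpoonup\theta$. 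Since the divergence at any fixed vertex $w$ depends only on the finitely many edges at $w$ and $w\in V_n$ for all large $n$, we get $\operatorname{div}\theta(w)=\delta_v(w)$ for every $w$, i.e.\ $\theta$ is a unit flow from $v$ to infinity; and $\cE_G(\theta)\le\liminf_n\cE_G(\hat\theta_n)\le M$ by weak lower semicontinuity of the norm. Hence $G$ carries a finite-energy unit flow to infinity and is transient.

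The step I expect to be the main obstacle is the weak-compactness argument in the ``if'' direction --- namely, confirming that the weak limit $\theta$ is a genuine unit flow to infinity rather than some defective limit in which the unit of mass has ``leaked to infinity.'' The resolution is structural: each conservation law $\operatorname{div}\theta(w)=\delta_v(w)$ is one linear equation in finitely many coordinates that the $\hat\theta_n$ satisfy exactly for all large $n$, so it is preserved under weak limits, while energy can only decrease in the limit --- exactly the inequality we need. The remaining ingredients (the gluing identity and the existence of the current realizing $\reff$ on the finite network $\tilde G_n$) are entirely routine. I would also remark that the argument can be run probabilistically instead: the finite-network identity $\Pr_v[\tau_{V\setminus V_n}<\tau_v^+]=\bigl(c_v\,\reff^G(\{v\}\leftrightarrow V\setminus V_n)\bigr)^{-1}$, combined with monotone convergence and the fact that, conditioned on never returning to $v$, the walk almost surely eventually exits every finite set, gives $\Pr_v[\tau_v^+=\infty]=\bigl(c_v\lim_n\reff^G(\{v\}\leftrightarrow V\setminus V_n)\bigr)^{-1}$, and $G$ is transient precisely when this is positive.
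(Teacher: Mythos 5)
The paper does not prove this statement; it is cited as a standard fact (see the sentence ``we recall the following standard characterization'' with the reference to Lyons--Peres). Your proof is a correct and essentially standard derivation from T.~Lyons's flow criterion together with the gluing identity $\reff^G(\{v\}\leftrightarrow V\setminus V_n)=\reff^{\tilde G_n}(v,z_n)$, and the weak-compactness step in the ``if'' direction is handled properly: divergence at a fixed vertex is a continuous linear functional depending on finitely many edges, so it passes to the weak limit, while energy is weakly lower semicontinuous. One small point worth making explicit is why the paper's definition of $\reff^G(\{v\}\leftrightarrow V\setminus V_n)$ (infimum over $t\in\ell_1$ with $\supp(t)\subseteq V\setminus V_n$ and $\|t\|_1=1$) agrees with the contracted-graph resistance $\reff^{\tilde G_n}(v,z_n)$: for the demand $s-t=\delta_v-t$ to be realizable by a flow one needs $\sum_w t(w)=1$, which together with $\|t\|_1=1$ forces $t\geq 0$; conversely, the unit electrical current in $\tilde G_n$ from $v$ to $z_n$ has nonnegative outflow across every boundary edge by the maximum principle for the harmonic potential, so its pullback does produce a probability measure $t$ on $V\setminus V_n$. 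With that observation the two infima coincide and the rest of your argument goes through as written.
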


For a tiling $\bT$ of a closed rectangle $R$, let $\scrL(\bT)$ and $\scrR(\bT)$ denote the sets
of tiles that intersect the left and right edges of $R$, respectively.
We define
\[
   \rho(\bT) \seteq \reff^{G(\bT)}\left(\1_{\scrL(\bT)}/|\scrL(\bT)|, \1_{\scrR(\bT)}/|\scrR(\bT)|\right),
\]

\begin{observation}\label{obs:LR}
   For any $\bS,\bT \in \cT$, we have $|\scrL(\bS \tileprod \bT)| = |\scrL(\bS)|\cdot |\scrL(\bT)|$ and
   $|\scrR(\bS \tileprod \bT)| = |\scrR(\bS)|\cdot |\scrR(\bT)|$.
   In particular, $|\scrL(\bH^n)| = |\scrR(\bH^n)| = 3^n$.
\end{observation}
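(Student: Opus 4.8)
The plan is to set up the obvious bijection between tiles of $\bS \tileprod \bT$ and pairs $(A,B) \in \bS \times \bT$ — the tile $R = R(A,B)$ is the scaled copy of $B$ sitting inside the copy of $A$'s region — and then to show that $R(A,B)$ touches the left edge of the unit square exactly when $A$ touches the left edge of the unit square in $\bS$ \emph{and} $B$ touches the left edge in $\bT$, and analogously for the right edge. Granting this, the two counting identities are immediate, and the claim about $\bH^n$ follows by an easy induction using associativity of $\tileprod$.

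Concretely, since $\bS$, $\bT$, and $\bS \tileprod \bT$ all lie in $\cT$, a tile $Y$ of any of them lies in $\scrL$ iff $p_1(Y) = 0$ and in $\scrR$ iff $p_1(Y) + \ell_1(Y) = 1$; all tiles have strictly positive side lengths and sit inside $[0,1]^2$, so these coordinate equalities are precisely the condition of meeting the corresponding vertical edge. For the left edge I would use $p_1(R(A,B)) = p_1(A) + p_1(B)\,\ell_1(A)$ from the definition of the tiling product: the three quantities $p_1(A), p_1(B), \ell_1(A)$ are nonnegative with $\ell_1(A) > 0$, so $p_1(R(A,B)) = 0$ iff $p_1(A) = 0$ and $p_1(B) = 0$. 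Hence $(A,B) \mapsto R(A,B)$ restricts to a bijection $\scrL(\bS) \times \scrL(\bT) \to \scrL(\bS \tileprod \bT)$, giving $|\scrL(\bS \tileprod \bT)| = |\scrL(\bS)|\cdot|\scrL(\bT)|$. For the right edge I would instead compute $p_1(R(A,B)) + \ell_1(R(A,B)) = p_1(A) + \ell_1(A)\bigl(p_1(B) + \ell_1(B)\bigr)$; since $p_1(B) + \ell_1(B) \le 1$ and $p_1(A) + \ell_1(A) \le 1$, this is $\le 1$, with equality iff both $p_1(A) + \ell_1(A) = 1$ and $p_1(B) + \ell_1(B) = 1$, which yields the bijection $\scrR(\bS) \times \scrR(\bT) \to \scrR(\bS \tileprod \bT)$ and the second identity.

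For the final assertion I would note $|\scrL(\bH^0)| = |\scrR(\bH^0)| = 1$ (as $\bH^0 = \bI$ is the single tile $[0,1]^2$) and $|\scrL(\bH)| = |\scrR(\bH)| = 3$ by inspection of \pref{fig:tiling} — this is also the count of tiles meeting the left boundary already used in the proof of \pref{lem:example_diam} — and then induct: writing $\bH^{n} = \bH^{n-1} \tileprod \bH$, which is legitimate by associativity of $\tileprod$, the product identity gives $|\scrL(\bH^{n})| = 3\,|\scrL(\bH^{n-1})| = 3^n$, and identically for $\scrR$. I do not anticipate a genuine obstacle here; the only point to handle with a little care is confirming that $\bS \tileprod \bT$ is indeed a tiling of $[0,1]^2$ with strictly positive tile side lengths, so that "touches the left/right edge" is equivalent to the coordinate equalities $p_1 = 0$ and $p_1 + \ell_1 = 1$ used throughout — both of which are immediate from the definitions.
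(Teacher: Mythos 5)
The paper states this as an Observation and gives no proof, so there is no ``paper's approach'' to compare against; your argument is the natural and correct one. Your verification via the explicit formulas $p_1(R) = p_1(A) + p_1(B)\,\ell_1(A)$ and $p_1(R) + \ell_1(R) = p_1(A) + \ell_1(A)\bigl(p_1(B)+\ell_1(B)\bigr)$, together with the observation that membership in $\scrL$ (resp.\ $\scrR$) for a tile with positive side lengths inside $[0,1]^2$ is exactly the coordinate condition $p_1 = 0$ (resp.\ $p_1 + \ell_1 = 1$), is a complete and correct proof; the induction for $\bH^n$ using $|\scrL(\bH)| = |\scrR(\bH)| = 3$ then goes through as you describe.
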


\begin{lemma}
\label{lem:side_product_reff}
Suppose that $\bS,\bT$ are tilings satisfying the conditions of \pref{def:concat}.
Suppose furthermore that all rectangles in $\scrR(\bS)$ have the same height,
and the same is true for $\scrL(\bT)$.  Then we have
\[
   \rho(\bS \tilecat \bT) \leq \rho(\bS) + \rho(\bT) + \frac{1}{\max(|\scrR(\bS)|,|\scrL(\bT)|)}\,.
\]
\end{lemma}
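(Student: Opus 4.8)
The plan is to exhibit an explicit unit flow in $G(\bS\tilecat\bT)$ between $\1_{\scrL(\bS)}/|\scrL(\bS)|$ and $\1_{\scrR(\bT)}/|\scrR(\bT)|$ by composing three flows ``in series'' and adding their energies. Write $a=|\scrL(\bS)|$, $m=|\scrR(\bS)|$, $n=|\scrL(\bT)|$, $b=|\scrR(\bT)|$, and recall $\scrL(\bS\tilecat\bT)=\scrL(\bS)$ and $\scrR(\bS\tilecat\bT)=\scrR(\bT)$. The first point to record is the edge decomposition $E(G(\bS\tilecat\bT))=E(G(\bS))\sqcup E(G(\bT))\sqcup E_{\mathrm{int}}$, where $E_{\mathrm{int}}$ is the set of new ``interface'' edges, each joining a tile of $\scrR(\bS)$ to a tile of $\scrL(\bT)$: tiles of $\bS$ retain exactly their old pairwise adjacencies (and likewise for $\bT$), since the concatenation only translates $\bT$'s rectangle. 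I then take $\theta_1$ to be an optimal (energy $\rho(\bS)$) flow in $G(\bS)$ with demand $\1_{\scrL(\bS)}/a-\1_{\scrR(\bS)}/m$; $\theta_3$ an optimal (energy $\rho(\bT)$) flow in $G(\bT)$ with demand $\1_{\scrL(\bT)}/n-\1_{\scrR(\bT)}/b$; and $\theta_2$ a flow supported on $E_{\mathrm{int}}$ with demand $\1_{\scrR(\bS)}/m-\1_{\scrL(\bT)}/n$. Since $\theta_1,\theta_2,\theta_3$ have pairwise disjoint supports, $\theta_1+\theta_2+\theta_3$ is a flow with demand $\1_{\scrL(\bS)}/a-\1_{\scrR(\bT)}/b$ whose energy is the sum of the three energies, so $\rho(\bS\tilecat\bT)\le\rho(\bS)+\rho(\bT)+\cE(\theta_2)$, and the whole lemma reduces to bounding $\cE(\theta_2)\le 1/\max(m,n)$.

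To construct $\theta_2$: after rescaling we may assume $R$ and $R'$ have height $1$ (the quantities $\rho(\cdot)$ and $\reff$ are scale-invariant). Every tile of $\bS$ touching the right edge of $R$ has its entire right side on that edge (as $\bS$ tiles $R$), so the hypothesis that all tiles of $\scrR(\bS)$ share a height forces these right sides to partition the right edge of $R$ into $m$ sub-intervals $\{I_A:A\in\scrR(\bS)\}$ of length $1/m$ each; likewise the left sides of the tiles of $\scrL(\bT)$ partition the left edge of $R''$ (which, after the translation of \pref{def:concat}, coincides with the right edge of $R$) into $n$ sub-intervals $\{I_B:B\in\scrL(\bT)\}$ of length $1/n$ each. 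By the definition of $G(\bS\tilecat\bT)$, tiles $A\in\scrR(\bS)$ and $B\in\scrL(\bT)$ are adjacent exactly when $|I_A\cap I_B|>0$, and I set $\theta_2(\{A,B\})\seteq |I_A\cap I_B|$. This is a valid flow with the required demand since $\sum_{B}|I_A\cap I_B|=|I_A|=1/m$ and $\sum_{A}|I_A\cap I_B|=|I_B|=1/n$ (each family of intervals covers the whole edge). Finally, using $|I_A\cap I_B|\le\min(|I_A|,|I_B|)=1/\max(m,n)$,
\[
   \cE(\theta_2)=\sum_{A,B}|I_A\cap I_B|^2\le\frac{1}{\max(m,n)}\sum_{A,B}|I_A\cap I_B|=\frac{1}{\max(m,n)},
\]
since $\sum_{A,B}|I_A\cap I_B|=\sum_A|I_A|=1$; this is the desired term $1/\max(|\scrR(\bS)|,|\scrL(\bT)|)$.

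The only genuinely substantive step is the construction and estimate of the interface flow $\theta_2$ in the second paragraph; everything else is bookkeeping — verifying the edge decomposition, the disjointness of supports (so that energies add exactly), and the telescoping of the three demands — together with the standard identification of $\reff$ with minimum flow energy. I expect the point requiring the most care to be the verification that the right side of every tile in $\scrR(\bS)$ (resp.\ left side of every tile in $\scrL(\bT)$) genuinely lies on the relevant edge, so that the two interval partitions above are well-defined; this follows because $\bS$ tiles $R$ and $\bT$ tiles $R'$, so no tile can touch such an edge in a set of positive length without its corresponding side lying entirely on it.
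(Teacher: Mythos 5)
Your proof is correct and takes essentially the same route as the paper. You make explicit (via the disjoint-support decomposition $E(G(\bS\tilecat\bT))=E(G(\bS))\sqcup E(G(\bT))\sqcup E_{\mathrm{int}}$ and the three separately-optimal flows) what the paper compresses into ``the triangle inequality for effective resistances'' followed by Rayleigh monotonicity, but the interface flow $\theta_2(\{A,B\})=|I_A\cap I_B|$ and the bound $\sum_{A,B}\theta_2^2\le(\max\theta_2)\sum_{A,B}\theta_2\le 1/\max(m,n)$ coincide with the paper's $F_{AB}$ and its estimate.
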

\begin{proof}
   By the triangle inequality for effective resistances, it suffices to prove that
   \[
      \reff^{G}\left(\1_{\scrR(\bS)}/|\scrR(\bS)|, \1_{\scrL(\bT)}/|\scrL(\bT)|\right) \leq \frac{1}{\max(|\scrR(\bS)|,|\scrL(\bT)|)}\,,
   \]
   where $G = G(\bS \tilecat \bT)$.
   We construct a flow from $\scrR(\bS)$ to $\scrL(\bT)$ as follows:  If $A \in \scrR(\bS), B \in \scrL(\bT)$
   and $\{A,B\} \in E(G)$, then the flow value on $\{A,B\}$ is 
   \[
      F_{AB} \seteq \frac{\mathrm{len}(A \cap B)}{\ell_2(A)} \frac{1}{|\scrR(\bS)|}.
   \] 
   Denoting $m \seteq \max(|\scrR(\bS)|,|\scrL(\bT)|)$, we clearly have
   $F_{AB} \leq 1/m$.  Moreover,
   \[
      \sum_{A \in \scrR(\bS)} \sum_{\substack{B \in \scrL(\bT) : \\ \{A,B\} \in E(G)}} F_{AB} = 1,
   \]
   hence
   \[
      \sum_{A \in \scrR(\bS)} \sum_{\substack{B \in \scrL(\bT) : \\ \{A,B\} \in E(G)}} F_{AB}^2 \leq 1/m,
   \]
   completing the proof.
\end{proof}
Say that a tiling $\bT$ is {\em non-degenerate} if $\cL(\bT) \cap \cR(\bT) = \emptyset$, i.e., if no tile $A \in \bT$
touches both the left and right edges of $\region{\bT}$.
Let $\Delta_{\bT} \seteq \max_{A \in \bT} \deg_{G(\bT)}(A)$. 
If $\bS$ and $\bT$ are non-degenerate, we have the simple inequalities $\rho(\bT) \geq 1/(\Delta_{\bT} \cdot |\scrL(\bT)|) $ and $\rho(\bT) \geq 1/(\Delta_{\bT} \cdot |\scrR(\bT)|)$.
Together with \pref{lem:degree_bound}, this
yields a fact that we will employ later.

\begin{corollary}\label{cor:side}
   For any two non-degenerate tilings $\bS,\bT$ satisfying the assumptions of \pref{lem:side_product_reff},
   it holds that
   \begin{align*}
	   	\rho(\bS \tilecat \bT) &\leq \rho(\bS) + \rho(\bT) + 8 \min(\alpha_S \rho(\bS),\alpha_T \rho(\bT)) \\
	   &\lesssim_{\alpha_S, \alpha_T} \rho(\bS) + \rho(\bT).
   \end{align*}
\end{corollary}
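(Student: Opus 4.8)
The plan is to obtain the corollary as a bookkeeping consequence of \pref{lem:side_product_reff} together with the two lower bounds on $\rho$ stated just above: namely, that for non-degenerate $\bS,\bT$ one has $\rho(\bS) \geq 1/(\Delta_{\bS}\cdot|\scrR(\bS)|)$ and $\rho(\bT) \geq 1/(\Delta_{\bT}\cdot|\scrL(\bT)|)$. Recall that \pref{lem:side_product_reff} (whose hypotheses are exactly those of the corollary) already gives
\[
   \rho(\bS \tilecat \bT) \leq \rho(\bS) + \rho(\bT) + \frac{1}{\max(|\scrR(\bS)|,|\scrL(\bT)|)},
\]
so the only thing left to do is to re-express the last error term in terms of $\rho(\bS)$ and $\rho(\bT)$.

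First I would use the elementary identity $\frac{1}{\max(a,b)} = \min\!\left(\tfrac1a,\tfrac1b\right)$ for $a,b > 0$, which rewrites the error term as $\min\!\left(\tfrac{1}{|\scrR(\bS)|},\tfrac{1}{|\scrL(\bT)|}\right)$. Then I would rearrange the two non-degeneracy inequalities to $\tfrac{1}{|\scrR(\bS)|} \leq \Delta_{\bS}\,\rho(\bS)$ and $\tfrac{1}{|\scrL(\bT)|} \leq \Delta_{\bT}\,\rho(\bT)$, and combine these with the degree bound $\Delta_{\bS} \leq 8\alpha_{\bS}$, $\Delta_{\bT} \leq 8\alpha_{\bT}$ from \pref{lem:degree_bound}. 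This yields
\[
   \frac{1}{\max(|\scrR(\bS)|,|\scrL(\bT)|)} \leq \min\!\bigl(8\alpha_{\bS}\rho(\bS),\,8\alpha_{\bT}\rho(\bT)\bigr) = 8\min\!\bigl(\alpha_{\bS}\rho(\bS),\,\alpha_{\bT}\rho(\bT)\bigr),
\]
and substituting into the displayed bound gives the first line of the corollary. For the second line I would simply bound the minimum above by either argument, e.g. $8\min(\alpha_{\bS}\rho(\bS),\alpha_{\bT}\rho(\bT)) \leq 4\alpha_{\bS}\rho(\bS) + 4\alpha_{\bT}\rho(\bT)$, and absorb the factors $\alpha_{\bS},\alpha_{\bT}$ into an implied constant, so that $\rho(\bS \tilecat \bT) \lesssim_{\alpha_{\bS},\alpha_{\bT}} \rho(\bS) + \rho(\bT)$.

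I do not expect any real obstacle here; the statement is a routine consequence of results already in hand. The only steps meriting a line of justification are the identity $\tfrac{1}{\max(a,b)} = \min(\tfrac1a,\tfrac1b)$ and the validity of the non-degeneracy inequalities $\rho \geq 1/(\Delta\cdot|\scrL|)$ — the latter being an instance of the Nash--Williams inequality (all of the unit flow out of the normalized source $\1_{\scrL(\bT)}/|\scrL(\bT)|$ must cross one of at most $\Delta_{\bT}|\scrL(\bT)|$ edges of unit conductance, so Cauchy--Schwarz forces flow energy at least $1/(\Delta_{\bT}|\scrL(\bT)|)$). Since these inequalities are already asserted in the excerpt preceding the corollary, in the write-up I would just invoke them directly and keep the argument to a few lines.
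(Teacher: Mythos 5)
Your proof is correct and follows exactly the derivation the paper intends but leaves implicit: combine \pref{lem:side_product_reff} with the non-degeneracy bounds $\rho(\bS) \geq 1/(\Delta_{\bS}|\scrR(\bS)|)$, $\rho(\bT) \geq 1/(\Delta_{\bT}|\scrL(\bT)|)$ and the degree bound $\Delta \leq 8\alpha$ from \pref{lem:degree_bound}, rewriting the error term via $1/\max(a,b) = \min(1/a,1/b)$. The Nash--Williams/Cauchy--Schwarz justification of the non-degeneracy inequalities is a useful addition of detail, but otherwise the argument is the same bookkeeping the paper gestures at in the sentence preceding the corollary.
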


\begin{lemma}
\label{lem:left_right_reff}
   For every $n \geq 1$, it holds that
   \[
   \rho(\bH^n) \lesssim (5/6)^n.
   \]
\end{lemma}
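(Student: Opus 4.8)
The plan is to exploit the self-similar structure $\bH^n = \bH \tileprod \bH^{n-1}$ (valid by associativity) together with the fact that $\tileprod$ distributes over $\tilecat$, so that $\bH^n$ splits into three vertical ``bands'' placed side by side. Writing $\bH = \bH_L \tilecat \bH_M \tilecat \bH_R$ for its three columns — each of width $1/3$, and (as one checks from \pref{fig:tiling}, or from \pref{lem:example_diam} and \pref{lem:example_Delta_bound}, which force it) containing $3$, $6$, and $3$ tiles of heights $1/3$, $1/6$, $1/3$ respectively — distributivity gives $\bH^n = \cL \tilecat \cM \tilecat \cR$, where $\cL = \bH_L \tileprod \bH^{n-1}$ and $\cR = \bH_R \tileprod \bH^{n-1}$ each consist of three vertically stacked scaled copies of $\bH^{n-1}$, while $\cM = \bH_M \tileprod \bH^{n-1}$ consists of six such copies. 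Along the way I would record the elementary structural facts needed below: each of $\cL,\cM,\cR$ has connected dual graph; and for every $m \ge 0$ the tiles of $\scrL(\bH^m)$ share a common height, as do those of $\scrR(\bH^m)$ (an easy induction using $\bH^m = \bH \tileprod \bH^{m-1}$, already implicit in the proof of \pref{lem:example_Delta_bound}), whence the left/right boundary tiles of each of $\cL,\cM,\cR$ also have a common height.

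The key estimate is a \emph{parallel bound}: if $\bT^{(k)}$ is the tiling obtained by stacking $k$ scaled copies $\bT_1,\dots,\bT_k$ of a tiling $\bT$ vertically, then $\rho(\bT^{(k)}) \le \rho(\bT)/k$. To prove this I would exhibit a flow: the uniform unit demand on $\scrL(\bT^{(k)}) = \bigsqcup_{i} \scrL(\bT_i)$ places mass $\tfrac{1}{k|\scrL(\bT)|}$ on each boundary tile, so I route $\tfrac1k$ units through the $i$-th copy, from the uniform measure on $\scrL(\bT_i)$ to the uniform measure on $\scrR(\bT_i)$, using only edges internal to that copy. Inside copy $i$ this is $\tfrac1k$ times an optimal flow for $\rho(\bT)$, costing $(\tfrac1k)^2 \rho(\bT)$, and summing over the $k$ copies gives total energy $k \cdot (\tfrac1k)^2 \rho(\bT) = \rho(\bT)/k$. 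Applied with $\bT = \bH^{n-1}$ this gives $\rho(\cL), \rho(\cR) \le \tfrac13 \rho(\bH^{n-1})$ and $\rho(\cM) \le \tfrac16 \rho(\bH^{n-1})$, so that the three bands together contribute $(\tfrac13 + \tfrac16 + \tfrac13)\rho(\bH^{n-1}) = \tfrac56 \rho(\bH^{n-1})$.

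Next I would run the series bound by applying \pref{lem:side_product_reff} twice to $\bH^n = (\cL \tilecat \cM)\tilecat \cR$: its hypotheses are exactly the common-height facts noted above, and by \pref{obs:LR} we have $|\scrR(\cL)| = 3\cdot 3^{n-1} = 3^n$, $|\scrL(\cM)| = |\scrR(\cM)| = 6\cdot 3^{n-1} = 2\cdot 3^n$, and $|\scrL(\cR)| = 3^n$. This yields
\[
   \rho(\bH^n) \;\le\; \rho(\cL) + \rho(\cM) + \rho(\cR) + \frac{1}{2\cdot 3^n} + \frac{1}{2\cdot 3^n} \;\le\; \frac{5}{6}\,\rho(\bH^{n-1}) + 3^{-n}.
\]
It is essential here to use \pref{lem:side_product_reff} with its additive $1/\max(\cdot)$ error rather than \pref{cor:side}, whose $\alpha$-dependent multiplicative constant would compound across the $n$ levels. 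Finally I would unwind the recursion from $\rho(\bH^0) = 0$: $\rho(\bH^n) \le \sum_{j=1}^{n} (5/6)^{n-j} 3^{-j} = (5/6)^n \sum_{j=1}^{n} (2/5)^j \le \tfrac23 (5/6)^n$, which is the claimed bound. The error terms are harmless precisely because $(6/5)\cdot\tfrac13 = \tfrac25 < 1$, so the geometric sum converges and no spurious polynomial-in-$n$ factor appears.

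The main obstacle is really the parallel bound and the realization that the three-column shape of $\bH$ gives exactly the resistance saving $\tfrac13 + \tfrac16 + \tfrac13 = \tfrac56$ per level; the remaining work — verifying the common-height conditions for $\cL,\cM,\cR$, checking distributivity of $\tileprod$ over $\tilecat$, and tracking the additive errors so they do not accumulate — is routine bookkeeping.
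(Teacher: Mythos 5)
Your proof is correct and is essentially the paper's argument: the same series--parallel scheme combining \pref{lem:side_product_reff} with the parallel law for stacked copies, arriving at the identical recursion $\rho(\bH^n) \le \tfrac{5}{6}\rho(\bH^{n-1}) + 3^{-n}$ and summing the geometric series. The only cosmetic difference is the order of the decomposition: the paper slices $\bH^n$ into three identical horizontal slabs, each of the form $\bH^{n-1} \tilecat \bS \tilecat \bH^{n-1}$ with $\bS$ a stack of two copies, whereas you concatenate the three column bands of $3$, $6$, $3$ stacked copies --- which is exactly the decomposition the paper itself uses later in the generalized \pref{lem:eff-resist-general}.
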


\begin{proof}
   Fix $n \geq 2$.
   Recalling \pref{fig:dual},
   let us consider $\bH^n$ as consisting of three (identical) tilings stacked vertically, and
   where each of these three tilings is written as $\bH^{n-1} \tilecat \bS \tilecat \bH^{n-1}$ where $\bS$ consists of two copies
   of $\bH^{n-1}$ stacked vertically.  Applying \pref{lem:side_product_reff} to $\bH^{n-1} \tilecat \bS \tilecat \bH^{n-1}$
   gives
   \begin{align*}
      \rho(\bH^n) &\leq (1/3)^2 \cdot 3 \left(2 \rho(\bH^{n-1}) + \rho(\bS) + \frac{1}{\max(|\scrR(\bH^{n-1})|,|\scrL(\bS)|)} + \frac{1}{\max(|\scrL(\bH^{n-1})|,|\scrR(\bS)|)}\right) \\
                  &\leq (1/3)^2 \cdot 3 \left(2 \rho(\bH^{n-1}) + (1/2)^2 \cdot 2 \rho(\bH^{n-1}) + \frac{2}{2 \cdot 3^{n-1}}\right) \\
      &= (5/6) \rho(\bH^{n-1}) + 3^{-n},
   \end{align*}
   where in the second inequality we have employed \pref{obs:LR}.
   This yields the desired result by induction on $n$.
\end{proof}

\begin{corollary}
   \label{cor:transient}
   The graphs $\cH_n = G(\bH^0 \tilecat \bH^1 \tilecat \cdots \tilecat \bH^n)$ satisfy
   \begin{equation}\label{eq:bddreff}
      \sup_{n \geq 1} \rho(\cH_n) < \infty.
   \end{equation}
   Hence $\cH_{\infty}$ is transient.
\end{corollary}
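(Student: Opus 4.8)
The plan is to prove the quantitative estimate \eqref{eq:bddreff} and then invoke the transience criterion of \pref{thm:bdd-energy}. Write $\bT_n \seteq \bH^0 \tilecat \bH^1 \tilecat \cdots \tilecat \bH^n$, so that $\cH_n = G(\bT_n)$ and $\bT_n = \bT_{n-1} \tilecat \bH^n$; since each $\bH^k$ tiles $[0,1]^2$, the rectangle $\region{\bT_n}$ has height $1$. To bound $\rho(\cH_n)$ uniformly in $n$, I would run a one-term induction: peel off the last factor and apply \pref{lem:side_product_reff} with $\bS = \bT_{n-1}$ and $\bT = \bH^n$. Its hypotheses hold because $\region{\bT_{n-1}}$ and $\region{\bH^n}$ both have height $1$ and, by the self-similar left--right symmetry of $\bH^k$ used in the proof of \pref{lem:example_Delta_bound}, the tiles of $\scrR(\bT_{n-1}) = \scrR(\bH^{n-1})$ all have a common height, as do those of $\scrL(\bH^n)$. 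Combined with $|\scrL(\bH^n)| = 3^n$ (\pref{obs:LR}), this yields
\[
   \rho(\bT_n) \le \rho(\bT_{n-1}) + \rho(\bH^n) + 3^{-n} \lesssim \rho(\bT_{n-1}) + (5/6)^n ,
\]
using \pref{lem:left_right_reff}. Starting from $\rho(\bT_0) = \rho(\bH^0) = 0$ (a single tile, for which $\scrL = \scrR$) and summing the geometric series gives $\sup_n \rho(\cH_n) = \sup_n \rho(\bT_n) < \infty$, which is \eqref{eq:bddreff}. (Iterating \pref{cor:side} instead works equally well, as long as one bounds its error term $8\min(\cdot,\cdot)$ by its second argument, so as never to need a bound on the possibly-large quantity $\alpha_{\bT_{n-1}}$.)

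To deduce transience I would apply \pref{thm:bdd-energy} with the vertex $\rho$ and the exhaustion $V_n \seteq V(\cH_n) = \bH^0 \cup \cdots \cup \bH^n$, which is increasing with $\bigcup_n V_n = V(\cH_\infty)$. The key point is that $\scrR(\bH^{n+1}) \subseteq V(\cH_{n+1}) \setminus V(\cH_n) \subseteq V(\cH_\infty) \setminus V_n$, and that the optimal unit flow for $\rho(\cH_{n+1}) = \reff^{\cH_{n+1}}\!\big(\1_{\{\rho\}},\, \1_{\scrR(\bH^{n+1})}/3^{n+1}\big)$ is, since $\cH_{n+1}$ is a subgraph of $\cH_\infty$, also a unit flow in $\cH_\infty$ from $\rho$ to a probability measure supported on $V(\cH_\infty) \setminus V_n$, of energy $\rho(\cH_{n+1})$. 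Hence $\reff^{\cH_\infty}\!\big(\{\rho\} \leftrightarrow V(\cH_\infty)\setminus V_n\big) \le \rho(\cH_{n+1})$, so $\sup_n \reff^{\cH_\infty}(\{\rho\}\leftrightarrow V(\cH_\infty)\setminus V_n) \le \sup_n \rho(\cH_n) < \infty$ by \eqref{eq:bddreff}, and \pref{thm:bdd-energy} gives that $\cH_\infty$ is transient.

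The only step with real content is \eqref{eq:bddreff}: it rests on both the ``contact-side'' resistances ($3^{-n}$, or the error term in \pref{cor:side}) and the internal resistances $\rho(\bH^n)$ being summable in $n$ --- exactly what \pref{obs:LR} and \pref{lem:left_right_reff} provide --- so that the telescoping does not diverge. The mild subtlety on the transience side is that \pref{thm:bdd-energy} concerns $\reff^{\cH_\infty}(\{\rho\}\leftrightarrow V(\cH_\infty)\setminus V_n)$ rather than $\rho(\cH_n)$ itself; this is bridged by the observation above that $\scrR(\bH^{n+1})$ already lies in the complement of $V_n$, so that the flow optimal for $\rho(\cH_{n+1})$ serves directly as a witness. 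Everything else --- the common-height hypothesis for \pref{lem:side_product_reff}, the value $\rho(\bH^0)=0$, and subgraph monotonicity of effective resistance --- is routine given the lemmas already established.
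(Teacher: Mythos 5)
Your proposal is correct and follows essentially the same route as the paper: peel off the last concatenand, apply \pref{lem:side_product_reff} together with \pref{obs:LR} and \pref{lem:left_right_reff} to get the telescoping bound $\rho(\cH_n)\lesssim\sum_{j\le n}[(5/6)^j+3^{-j}]$, and then invoke \pref{thm:bdd-energy}. The one place where you add material that the paper leaves implicit is the bridge from $\sup_n\rho(\cH_n)<\infty$ to $\sup_n\reff^{\cH_\infty}(\{\rho\}\leftrightarrow V(\cH_\infty)\setminus V_n)<\infty$, which you handle cleanly by noting $\scrL(\bT_{n+1})=\{\rho\}$, $\scrR(\bH^{n+1})\subseteq V(\cH_\infty)\setminus V_n$, and Rayleigh monotonicity for the subgraph $\cH_{n+1}\subseteq\cH_\infty$; this is a welcome explication rather than a departure.
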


\begin{proof}
   Employing \pref{lem:side_product_reff}, \pref{obs:LR}, and \pref{lem:left_right_reff} together yields
   \[
      \rho(\cH_n) \lesssim \sum_{j=1}^n \left[(5/6)^j + 3^{-j}\right],
   \]
   verifying \eqref{eq:bddreff}.
   Now \pref{thm:bdd-energy} yields the transience of $\cH_{\infty}$.
\end{proof}

\section{Generalizations and unimodular constructions}
\label{sec:generalizations}

Consider a sequence $\gamma = \llangle \gamma_1,\ldots,\gamma_b\rrangle$ with $\gamma_i \in \N$.
Define a tiling $\bT_{\gamma} \in \cT$ as follows:  The unit square is partitioned into
$b$ columns of width $1/b$, and for $i \in \{1,2,\ldots,b\}$, the $i$th column has $\gamma_i$ rectangles
of height $1/\gamma_i$.
For instance, the tiling $\bH$ from \pref{fig:tiling} can be written $\bH = \bT_{\langle 3,6,3\rangle}$.

We will assume throughout this section that $\min(\gamma)=b$ and $\gamma_1=\gamma_b$.
Let us use the notation $|\gamma| \seteq \gamma_1 + \cdots + \gamma_b$.
The proof of the next lemma follows just as for \pref{lem:example_diam} using $\min(\gamma)=b$
so that there is a column in $\bT_{\gamma}^n$ of height $b^n$.

\begin{lemma}
\label{lem:HabnDiam}
For $n \geq 0$, it holds that $|\bT_{\gamma}^n| = |\gamma|^n$, and $b^n \leq \diam(\bT_{\gamma}^n) \leq 3 b^n$.
\end{lemma}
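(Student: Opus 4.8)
The plan is to mirror the proof of \pref{lem:example_diam} almost verbatim, tracking the two differences: the branching factor is now $|\gamma|$ rather than $12$, and the ``side width'' of the tiling is now $b$ rather than $3$. First I would establish $|\bT_{\gamma}^n| = |\gamma|^n$ by induction on $n$: the tiling $\bT_{\gamma}$ itself has $|\gamma| = \gamma_1 + \cdots + \gamma_b$ tiles (one per rectangle in each of the $b$ columns), and forming $\bT_{\gamma}^n = \bT_{\gamma} \tileprod \bT_{\gamma}^{n-1}$ replaces each of the $|\gamma|^{n-1}$ tiles of $\bT_{\gamma}^{n-1}$ by a scaled copy of $\bT_{\gamma}$ containing $|\gamma|$ tiles, giving $|\gamma|^n$; the base case $n=0$ is the single tile $[0,1]^2$.

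For the diameter bounds, I would first record the geometric facts analogous to those in \pref{lem:example_diam}. Since every column of $\bT_{\gamma}$ has width $1/b$, an easy induction shows that every tile $A \in \bT_{\gamma}^n$ has width $\ell_1(A) = b^{-n}$; hence there are exactly $b^n$ tiles meeting the left edge of $[0,1]^2$. For the lower bound $\diam(\bT_{\gamma}^n) \geq b^n$: the hypothesis $\min(\gamma) = b$ guarantees that some column of $\bT_{\gamma}$ has exactly $b$ rectangles, and iterating the product construction inside that column produces a column of $\bT_{\gamma}^n$ consisting of a vertical stack of $b^n$ tiles; any path in $G(\bT_{\gamma}^n)$ connecting the top tile of this column to the bottom one must use at least $b^n - 1$ edges, since each edge of $G(\bT_{\gamma}^n)$ changes the enclosing tile in a way that can move us only one ``layer'' along that column, so $\diam(\bT_{\gamma}^n) \geq b^n$.

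For the upper bound $\diam(\bT_{\gamma}^n) \leq 3b^n$, I would give the explicit three-leg routing used in \pref{lem:example_diam}: given $A, B \in \bT_{\gamma}^n$, first travel horizontally from $A$ to the left-most column; since all tiles have width $b^{-n}$, this column is reached in at most $b^n$ horizontal steps. Then travel vertically within the left-most column (which has $b^n$ tiles, by the width computation and the assumption $\gamma_1 = \gamma_b$ forcing symmetry of the outer columns) to the correct height, using at most $b^n$ steps. Finally travel horizontally from the left-most column to $B$ in at most $b^n$ steps. Thus $d_{G(\bT_{\gamma}^n)}(A,B) \leq 3b^n$, and taking the supremum over $A,B$ gives the claim. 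The main point requiring care — though it is routine given the structure already developed — is verifying that moving to the left-most column and back can indeed be done monotonically in at most $b^n$ steps each; this follows because consecutive tiles in a horizontal traversal at a fixed height have the same width $b^{-n}$ and tile an interval of length $1$, so there are exactly $b^n$ of them, exactly as in the proof of \pref{lem:example_diam}. No genuinely new obstacle arises; the only substantive ingredient beyond \pref{lem:example_diam} is the use of $\min(\gamma) = b$ to produce the tall column for the lower bound.
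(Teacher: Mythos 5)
Your overall plan correctly mirrors the proof of \pref{lem:example_diam}, but there is a genuine error in the tile count you use for the upper bound. You assert that the width computation ($\ell_1(A) = b^{-n}$ for all $A$) ``hence'' implies there are exactly $b^n$ tiles meeting the left edge of $[0,1]^2$, and later reiterate that the left-most column ``has $b^n$ tiles, by the width computation and the assumption $\gamma_1 = \gamma_b$.'' Neither inference is valid. The width computation tells you there are $b^n$ vertical strips of width $b^{-n}$, not how many tiles live in any particular strip. A short induction on the product $\bT_\gamma^n = \bT_\gamma \tileprod \bT_\gamma^{n-1}$ shows that the left-most column of $\bT_\gamma^n$ contains $\gamma_1^n$ tiles, and the standing hypotheses of \pref{sec:generalizations} are only $\min(\gamma) = b$ and $\gamma_1 = \gamma_b$; they do not force $\gamma_1 = b$. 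So if, say, $\gamma = \langle 5,4,4,5\rangle$ with $b = 4$, the left-most column has $5^n$ tiles, and your vertical leg is too long. The paper's one-line proof sidesteps this: it routes through ``a column in $\bT_\gamma^n$ of height $b^n$,'' i.e.\ the column whose base-$b$ address is $(i,i,\dots,i)$ for an index $i$ with $\gamma_i = b$ (such an $i$ exists precisely because $\min(\gamma)=b$). Replacing your ``left-most column'' with that column fixes the argument without any other changes, since the horizontal legs are still at most $b^n$ steps each. (For the concrete family $\gamma^{(b,k)}$ used later, $\gamma_1 = b$ does hold and your routing is accidentally correct, but the justification as written is wrong and the lemma is stated for general $\gamma$ satisfying only the two stated hypotheses.)

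A smaller point: your lower bound argument establishes that the path between the two ends of the tall column uses at least $b^n - 1$ edges, and then concludes $\diam(\bT_\gamma^n) \geq b^n$. As written this is off by one. The paper glosses over the same issue (and the bound also fails trivially at $n=0$), and all downstream uses only need the order of magnitude, so this is forgivable; but if you state $b^n - 1$ you should conclude $b^n - 1$, or supply an extra argument (e.g.\ that any such path must also take at least one horizontal step) to upgrade to $b^n$.
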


Clearly we have $\alpha_{\bT_{\gamma}} \leq |\gamma|/b$. The following lemma can be shown using a similar argument to that of \pref{lem:example_Delta_bound}. 
Note that the only symmetry required in the proof of \pref{lem:example_Delta_bound} is that the first and last column of $\bT_{\gamma}$ 
have the same geometry, and this is true since $\gamma_1=\gamma_b$.

\begin{lemma}
   \label{lem:alphaHabn}
   For any $n \geq 1$, it holds that $\alpha_{\bT_{\gamma}^n} \leq \alpha_{\bT_{\gamma}} \leq |\gamma|/b$. 
\end{lemma}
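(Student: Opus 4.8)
My plan is to follow the proof of \pref{lem:example_Delta_bound} essentially verbatim, arguing by induction on $n$ and replacing its use of the left--right symmetry of $\bH$ by the weaker hypothesis $\gamma_1 = \gamma_b$. Throughout, write $\bT_\gamma^n = \bT_\gamma \tileprod \bT_\gamma^{n-1}$, and for a tile $C \in \bT_\gamma^n$ let $\hat C \in \bT_\gamma$ be the outer tile with $C \subseteq \hat C$; the tiles of $\bT_\gamma^n$ lying in a fixed $\hat C$ form an affine rescaling of $\bT_\gamma^{n-1}$ (by $\ell_1(\hat C)$ horizontally and $\ell_2(\hat C)$ vertically), so ratios of side lengths of adjacent tiles inside $\hat C$ are exactly those of $\bT_\gamma^{n-1}$.

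First I would record two structural facts, each by a short separate induction. (A) Every tile of $\bT_\gamma^n$ has width exactly $b^{-n}$, and $\bT_\gamma^n$ is invariant under the reflection $(x,y) \mapsto (x,1-y)$; the base case is that each column of $\bT_\gamma$ is a stack of congruent rectangles, and the inductive step uses that $\tileprod$ multiplies widths and preserves vertical-reflection symmetry. (B) Every tile of $\scrL(\bT_\gamma^n)$ has height exactly $\gamma_1^{-n}$, and every tile of $\scrR(\bT_\gamma^n)$ has height exactly $\gamma_b^{-n}$; here the inductive step uses that the left edge of the unit square is covered by the $\gamma_1$ tiles of the first column of the outer $\bT_\gamma$, inside each of which $\scrL$ is a copy of $\scrL(\bT_\gamma^{n-1})$ scaled vertically by $1/\gamma_1$. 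By the hypothesis $\gamma_1 = \gamma_b$, fact (B) says that all tiles touching the left edge or the right edge of $\region{\bT_\gamma^{n-1}}$ share one common height.

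Now for the main induction, fix adjacent $A, B \in \bT_\gamma^n$; by (A) their widths agree, so it suffices to bound $\ell_2(A)/\ell_2(B)$. If $\hat A = \hat B$, then $A$ and $B$ are adjacent tiles in a rescaled copy of $\bT_\gamma^{n-1}$ and $\ell_2(A)/\ell_2(B) \leq \alpha_{\bT_\gamma^{n-1}} \leq \alpha_{\bT_\gamma}$ by the induction hypothesis. Otherwise $A \cap B \subseteq \hat A \cap \hat B$ has Hausdorff dimension one, so $\hat A$ and $\hat B$ are adjacent in $\bT_\gamma$. If they are vertically adjacent then they lie in one column of $\bT_\gamma$ and have identical dimensions; $A$ meets one horizontal edge of $\hat A$ and $B$ the opposite horizontal edge of $\hat B$, so $A$ and $B$ descend from tiles of $\bT_\gamma^{n-1}$ meeting the top and bottom edges respectively, with overlapping horizontal projections, and the vertical reflection symmetry from (A) forces these two tiles to have equal height, hence $\ell_2(A) = \ell_2(B)$. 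If $\hat A$ (in column $i$) and $\hat B$ (in column $i'$, $|i-i'|=1$) are horizontally adjacent, then $A$ and $B$ descend from a tile of $\scrR(\bT_\gamma^{n-1})$ or $\scrL(\bT_\gamma^{n-1})$ and the opposite one; by (B) both of these have height $\gamma_1^{-(n-1)}$, so the vertical rescaling factors $\ell_2(\hat A),\ell_2(\hat B)$ are the only asymmetry and $\ell_2(A)/\ell_2(B) = \ell_2(\hat A)/\ell_2(\hat B) = \gamma_{i'}/\gamma_i \leq \alpha_{\bT_\gamma}$, since $\hat B \in N_{\bT_\gamma}(\hat A,2)$. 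Taking the maximum over all adjacent pairs and both coordinates gives $\alpha_{\bT_\gamma^n} \leq \alpha_{\bT_\gamma}$, and $\alpha_{\bT_\gamma} \leq |\gamma|/b$ has already been noted.

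The main obstacle is the bookkeeping when $\hat A \neq \hat B$: one has to correctly identify from which boundary tiles of the inner copy of $\bT_\gamma^{n-1}$ the tiles $A$ and $B$ arise and check their heights agree. This is exactly the point where $\gamma_1 = \gamma_b$ enters — it makes the first and last columns of $\bT_\gamma$ congruent, and hence (via (B)) forces the left- and right-boundary tiles of every $\bT_\gamma^{m}$ to share a common height, so the vertical rescalings coming from the two sides cancel; without this hypothesis those heights could differ and the ratio $\ell_2(A)/\ell_2(B)$ would grow with $n$.
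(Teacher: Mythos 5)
Your proof is correct and follows the same approach as the paper, whose own proof simply remarks that the argument of \pref{lem:example_Delta_bound} carries over once one notes the only symmetry used there is that the first and last columns of $\bT_\gamma$ are congruent, which holds because $\gamma_1=\gamma_b$. Your fact~(B) makes precisely this observation, and your three-case induction (same outer tile; vertically adjacent outer tiles; horizontally adjacent outer tiles) spells out in full the argument the paper leaves as a one-line remark.
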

The next lemma also follows from \pref{lem:HabnDiam} and the same reasoning used in the proof of \pref{lem:growthHn}. The dependence of the implicit constant on $|\gamma|/b$ comes 
from \pref{lem:alphaHabn}.

\begin{lemma}
   \label{lem:growthHabn}
   For any $n \geq 0$, it holds that
   \[
      \left|B_{G}(A,r)\right| \asymp_{|\gamma|/b} r^{\log_b(|\gamma|)}
      \qquad \forall A \in \bT_{\gamma}^n,
      \ 1 \leq r \leq \diam(G(\bT_{\gamma}^n)).
   \]
\end{lemma}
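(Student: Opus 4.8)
The plan is to run the proof of \pref{lem:growthHn} essentially verbatim, making the substitutions $\bH \to \bT_{\gamma}$, $3 \to b$, $12 \to |\gamma|$, and replacing the bound $\alpha_{\bH^n}\le 2$ by $\alpha_{\bT_{\gamma}^n}\le |\gamma|/b$ from \pref{lem:alphaHabn}; the diameter input $3^n\le\diam(\bH^n)\le 3^{n+1}$ is replaced by $b^n\le\diam(\bT_{\gamma}^n)\le 3b^n$ from \pref{lem:HabnDiam}. All implicit constants will then depend only on $b$ and $|\gamma|$ (equivalently, for fixed $b$, only on $|\gamma|/b=\alpha_{\bT_{\gamma}}$), inherited exactly as before from \pref{lem:degree_bound} and \pref{lem:alphaHabn}.

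For the lower bound I would write $\bT_{\gamma}^n=\bT_{\gamma}^{n-k}\tileprod\bT_{\gamma}^{k}$ and combine \pref{lem:contract_growth_lower_bound} with \pref{lem:HabnDiam} to get
\[
   \bigl|B_{G(\bT_{\gamma}^n)}(A,3b^k)\bigr|\ \ge\ |\bT_{\gamma}^k|=|\gamma|^k,\qquad \forall A\in\bT_{\gamma}^n,\ k\in\{0,1,\ldots,n\}.
\]
Given $1\le r\le\diam(G(\bT_{\gamma}^n))$, I take $k$ to be the largest index with $3b^k\le r$ (the range $r<3b$ is trivial: the ball has size $\ge 1$ while $r^{\log_b|\gamma|}\lesssim_{b,|\gamma|}1$), and monotonicity of $r\mapsto|B(A,r)|$ yields $|B_{G(\bT_{\gamma}^n)}(A,r)|\gtrsim_{b,|\gamma|}r^{\log_b|\gamma|}$; no scale is skipped since consecutive scales $3b^k$ differ by the fixed factor $b$.

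For the upper bound I first record $L_{\bT_{\gamma}^k}=b^{-k}$: every tile of $\bT_{\gamma}$ has width $1/b$ and height $1/\gamma_i\le 1/b$ (here the standing hypothesis $\min(\gamma)=b$ is used), and these two facts pass to tiling products, so every tile of $\bT_{\gamma}^k$ has width $b^{-k}$ and height $\le b^{-k}$. Together with $\alpha_{\bT_{\gamma}^n}\le|\gamma|/b$, \pref{lem:growth_upper_bound} applied with $\bS=\bT_{\gamma}^{n-k}$ and $\bT=\bT_{\gamma}^{k}$ gives
\[
   \bigl|B_{G(\bT_{\gamma}^n)}(A,\,b^{k}(b/|\gamma|)^4)\bigr|\ \le\ 192\,(|\gamma|/b)^2\,|\gamma|^k,\qquad \forall A\in\bT_{\gamma}^n,\ k\in\{0,1,\ldots,n\}.
\]
For general $1\le r\le\diam\le 3b^n$ I pick the smallest $k$ with $b^{k}(b/|\gamma|)^4\ge r$, absorbing the finitely many smallest scales $r\le b^{k_0}(b/|\gamma|)^4$ (with $k_0$ depending only on $b,|\gamma|$) into the constant exactly as in \pref{lem:growthHn}, and again use monotonicity; the scales $b^{k}(b/|\gamma|)^4$ also differ by the factor $b$, so this covers every $r$, and at $k=n$ the reach $b^n(b/|\gamma|)^4$ is comparable to $\diam(G(\bT_{\gamma}^n))\asymp b^n$ up to the $b,|\gamma|$-dependent constant, so the full claimed range is handled.

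I do not expect a genuine obstacle here — the lemma is flagged in the text as following by ``the same reasoning'' — so the work is purely bookkeeping. The only two points that deserve explicit verification are (i) the identity $L_{\bT_{\gamma}^k}=b^{-k}$, which is precisely where $\min(\gamma)=b$ enters, and (ii) that the reach provided by \pref{lem:growth_upper_bound} at $k=n$ matches $\diam(G(\bT_{\gamma}^n))$ up to constants so the upper bound is not vacuous near $r=\diam$; both are immediate.
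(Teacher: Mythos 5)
Your proof is correct and is exactly the argument the paper intends: the paper itself gives no separate proof for this lemma, only the remark that it follows from Lemma~\ref{lem:HabnDiam} by the same reasoning as Lemma~\ref{lem:growthHn} with the constant dependence coming from Lemma~\ref{lem:alphaHabn}, and your substitutions ($3 \to b$, $12 \to |\gamma|$, $\alpha_{\bH^n} \leq 2 \to \alpha_{\bT_\gamma^n} \leq |\gamma|/b$, $L_{\bT_\gamma^k} = b^{-k}$) carry that out faithfully, including the two points you flag as worth checking. One tiny nit: you write ``$|\gamma|/b = \alpha_{\bT_\gamma}$,'' but Lemma~\ref{lem:alphaHabn} only gives the inequality $\alpha_{\bT_\gamma} \leq |\gamma|/b$; this does not affect the argument.
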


\subsection{Degrees of growth}
\label{sec:deg-growth}

Consider $b, k \in \N$ with $k \geq b \geq 4$, and define the sequence
\[
   \gamma^{(b,k)} \seteq \llangle b,
   \underbrace{\left\lceil \frac{k-3}{b-3} \right\rceil b, \ldots, \left\lceil \frac{k-3}{b-3} \right\rceil b}_{(k-3) \bmod (b-3) \textrm{ copies}}, b,
   \underbrace{\left\lfloor\frac{k-3}{b-3} \right\rfloor b, \ldots, \left\lfloor \frac{k-3}{b-3} \right\rfloor b}_{(b-3)-[(k-3) \bmod (b-3)] \textrm{ copies}}, b\rrangle.
\] 
Denote $\bT_{(b,k)} \seteq \bT_{\gamma^{(b,k)}}$ and note that $|\gamma^{(b,k)}|=bk$.
Define $\mathsf{d}_g(b,k) \seteq \log_b(bk)$,
and $\Gamma_{b,k} \seteq \sum_{i=1}^b 1/\gamma_i^{(b,k)}$.
\begin{observation}\label{obs:column}
   The following facts hold for $k \geq b \geq 4$ and $n \geq 0$:
   \begin{enumerate}
      \item[(a)] There are $b^n$ tiles in the left- and right-most columns of $\bT^n_{(b,k)}$.
      \item[(b)] If a pair of consecutive columns in $\bT^n_{(b,k)}$ have heights
   $h$ and $h'$, then $\min(h,h')$ divides $\max(h,h')$.
   \end{enumerate}
\end{observation}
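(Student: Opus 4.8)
\textbf{Proof plan for Observation~\ref{obs:column}.}
Both parts are structural facts about the columns of the iterated tiling $\bT^n_{(b,k)} = \bT_{(b,k)} \tileprod \cdots \tileprod \bT_{(b,k)}$, so the plan is to track what the tiling product does to columns and their heights. First I would set up notation: a \emph{column} of a tiling of a rectangle is a maximal vertical strip all of whose tiles share the same width, and for $\bT_{(b,k)}$ the columns are indexed $1,\dots,b$ with column $i$ containing $\gamma_i^{(b,k)}$ tiles of height $1/\gamma_i^{(b,k)}$. The key observation about the tiling product is that in $\bS \tileprod \bT$, each tile $A \in \bS$ is replaced by a scaled copy of $\bT$; in particular, if $A$ lies in a column of $\bS$, then the tiles of the copy of $\bT$ inside $A$ organize into $b$ sub-columns (one for each column of $\bT$), and the heights get multiplied by $\ell_2(A)$.

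For part (a), I would argue by induction on $n$. The left-most column of $\bT^n_{(b,k)}$ consists precisely of the tiles lying inside the left-most column of $\bT^{n-1}_{(b,k)}$ (viewing $\bT^n = \bT^{n-1} \tileprod \bT$) that also lie in the left-most column of the copy of $\bT = \bT_{(b,k)}$ used to subdivide each such tile. Since $\gamma_1^{(b,k)} = b$, the left-most column of $\bT_{(b,k)}$ has exactly $b$ tiles, so the number of tiles in the left-most column of $\bT^n_{(b,k)}$ is $b$ times that of $\bT^{n-1}_{(b,k)}$, giving $b^n$ by induction (base case $n=0$ is a single tile, $b^0 = 1$). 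The right-most column is symmetric since $\gamma_b^{(b,k)} = b$ as well. This is the same argument that underlies \pref{obs:LR} and \pref{lem:HabnDiam}, so it is routine.

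For part (b), the heart of the matter is the divisibility structure \emph{within} $\bT_{(b,k)}$ together with how products interact with it. I would first record the base fact: in $\bT_{(b,k)}$, every column height $1/\gamma_i^{(b,k)}$ divides (as a reciprocal-integer quantity, i.e.\ $\gamma_j^{(b,k)} \mid \gamma_i^{(b,k)}$ for the taller-to-shorter relation — the shorter column has the larger $\gamma$) the adjacent ones: the first column has $b$ tiles, then a block with $\lceil \frac{k-3}{b-3}\rceil b$ tiles, then $b$ again, then a block with $\lfloor \frac{k-3}{b-3}\rfloor b$ tiles, then $b$; consecutive entries in this list are each multiples of $b$ with ratios $b \mid \lceil\cdots\rceil b$, etc., so $\min(\gamma_i,\gamma_{i+1}) \mid \max(\gamma_i,\gamma_{i+1})$, which translates to the desired divisibility of heights. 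Then, for $\bT^n_{(b,k)} = \bT^{n-1}_{(b,k)} \tileprod \bT_{(b,k)}$, a column of $\bT^n$ sits inside a column of $\bT^{n-1}$ and corresponds to some column $i$ of $\bT_{(b,k)}$; its height is $\ell \cdot \tfrac{1}{\gamma_i^{(b,k)}}$ where $\ell$ is the height of the containing column of $\bT^{n-1}$. Two consecutive columns of $\bT^n$ are either inside the same column of $\bT^{n-1}$ (so have the same scale $\ell$, and divisibility follows from the base fact for $\bT_{(b,k)}$) or straddle a column boundary of $\bT^{n-1}$, in which case one is the right-most sub-column $i=b$ of one $\bT$-copy and the other is the left-most sub-column $i=1$ of the next, \emph{both} with $\gamma = b$, so their heights are $\ell/b$ and $\ell'/b$ where $\ell, \ell'$ are consecutive $\bT^{n-1}$-column heights, and divisibility follows from the induction hypothesis applied to $\bT^{n-1}$.

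The main obstacle I anticipate is purely bookkeeping: making precise the claim that the columns of $\bT^n_{(b,k)}$ genuinely refine those of $\bT^{n-1}_{(b,k)}$ in the clean nested way described (i.e.\ that a ``column'' in the product is exactly the preimage of a column of $\bT^{n-1}$ intersected with a sub-column coming from $\bT$), and handling the straddling case correctly — this is where $\gamma_1 = \gamma_b = b$ is used in an essential way, since if the outer columns of $\bT_{(b,k)}$ did not share the same height, consecutive columns across a copy boundary could fail the divisibility condition. Everything else is a direct induction mirroring the arguments already given for \pref{lem:example_diam} and \pref{obs:LR}.
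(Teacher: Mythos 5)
The paper states \pref{obs:column} without proof (as an ``Observation''), so there is no argument in the paper to compare against; your inductive proof is correct and is the natural one the authors presumably have in mind, echoing the multiplicativity in \pref{obs:LR} for part (a) and the left/right-column symmetry argument in \pref{lem:example_Delta_bound} for part (b). You also correctly flag where the standing assumption $\gamma_1 = \gamma_b$ is load-bearing, namely in the case where two consecutive columns of $\bT^n_{(b,k)}$ straddle a column boundary of $\bT^{n-1}_{(b,k)}$.
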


Now observe that \pref{lem:growthHabn} yields the following.

\begin{corollary}
\label{cor:sequence_log_growth}
The family of graphs $\cF = \big\{G\big(\bT_{(b,k)}^n\big) : n \geq 0\big\}$ has uniform polynomial
growth of degree $\mathsf{d}_g(b,k)$ in the sense that
\[
   |B_G(x,r)| \asymp_k r^{\mathsf{d}_g(b,k)},\qquad \forall G \in \cF, x \in V(G), 1 \leq r \leq \diam(G)\,.
\]
For any rational $p/q \geq 2$, one can achieve $\mathsf{d}_g(b,k)=p/q$ by taking $b=4^q$ and $k=4^{p-q}$.
\end{corollary}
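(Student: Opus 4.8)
The plan is to read off the first assertion directly from \pref{lem:growthHabn} applied with $\gamma = \gamma^{(b,k)}$, and to verify the second assertion by a one-line computation. Before invoking \pref{lem:growthHabn} I would check that $\gamma^{(b,k)}$ falls under the standing assumptions of this section — namely $\min(\gamma^{(b,k)}) = b$ and $\gamma^{(b,k)}_1 = \gamma^{(b,k)}_b$ — and compute $|\gamma^{(b,k)}| = bk$. Given these, \pref{lem:growthHabn} immediately yields $|B_G(x,r)| \asymp_{|\gamma^{(b,k)}|/b} r^{\log_b |\gamma^{(b,k)}|}$ for every $G = G(\bT_{(b,k)}^n) \in \cF$, which is exactly $\asymp_k r^{\mathsf{d}_g(b,k)}$ since $|\gamma^{(b,k)}|/b = k$ and $\log_b(bk) = \mathsf{d}_g(b,k)$.

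To check the standing assumptions, I would first count that $\gamma^{(b,k)}$ has $3 + [(k-3)\bmod(b-3)] + \big((b-3) - [(k-3)\bmod(b-3)]\big) = b$ entries: three of them equal to $b$ (the first, the middle, and the last), and the remaining $b-3$ equal to either $\lceil(k-3)/(b-3)\rceil b$ or $\lfloor(k-3)/(b-3)\rfloor b$. Since $k \ge b$ forces $\lfloor(k-3)/(b-3)\rfloor \ge 1$, every entry is $\ge b$, so $\min(\gamma^{(b,k)}) = b$, and by construction the first and last entries are both $b$. For the sum I would write $k - 3 = Q(b-3) + m$ with $Q = \lfloor(k-3)/(b-3)\rfloor$ and $0 \le m < b-3$, so that
\[
   |\gamma^{(b,k)}| = 3b + m(Q+1)b + (b-3-m)Qb = 3b + b\big(m + Q(b-3)\big) = 3b + b(k-3) = bk,
\]
with the $m = 0$ case covered by the same line (the middle term simply vanishes). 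Hence $\log_b |\gamma^{(b,k)}| = \log_b(bk) = \mathsf{d}_g(b,k)$, and the first assertion follows.

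For the second assertion I would set $b = 4^q$ and $k = 4^{p-q}$. Since $p/q \ge 2$ forces $p - q \ge q \ge 1$, we get $k = 4^{p-q} \ge 4^q = b \ge 4$, so $(b,k)$ is an admissible pair for this section, and
\[
   \mathsf{d}_g(b,k) = \log_b(bk) = \log_{4^q}\!\big(4^q \cdot 4^{p-q}\big) = \log_{4^q}(4^p) = \frac{p}{q}.
\]

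There is no genuine obstacle here: the entire statement is a direct appeal to \pref{lem:growthHabn} followed by a substitution. The only mildly fiddly point is the arithmetic identity $|\gamma^{(b,k)}| = bk$, which requires tracking the ceiling/floor bookkeeping (and, strictly, noting that the $m=0$ and $m>0$ cases agree), but this is entirely routine.
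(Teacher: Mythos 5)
Your proposal is correct and follows exactly the route the paper intends: the paper's ``proof'' of this corollary is the single sentence ``Now observe that \pref{lem:growthHabn} yields the following,'' and you have simply supplied the routine checks it leaves implicit — that $\gamma^{(b,k)}$ satisfies the section's standing hypotheses $\min(\gamma)=b$ and $\gamma_1=\gamma_b$, that $|\gamma^{(b,k)}|=bk$ (which the paper asserts just before the corollary), and the substitution $b=4^q$, $k=4^{p-q}$.
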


\begin{remark}[Arbitrary real degrees $d > 2$]
   \label{rem:real-degrees}
   We note that by considering more general products of tilings, one can obtain planar graphs of uniform polynomial growth of any real degree $d > 2$ for which the main results of this paper still hold. Instead of working with the family of powers $\{\bT_\gamma^n\}$ for a fixed tiling $\bT_\gamma$, one defines an infinite sequence $\langle \gamma^{(1)}, \gamma^{(2)}, \ldots \rangle$, and examines the family of graphs $\bT_{\gamma^{(n)}} \circ \cdots \circ \bT_{\gamma^{(1)}}$.

   More concretely, 
   fix some real $d > 2$, and let us consider a sequence $\{h_n : n \geq 1\}$ of nonnegative integers.
   Also define $\gamma^{(n)} \seteq \gamma^{(4,4+h_n)}$ and $\bT^{(n)} \seteq \bT_{\gamma^{(n)}} \circ \cdots \circ \bT_{\gamma^{(1)}}$.
Then $|\bT^{(n)}| = 4^n \prod_{j=1}^n (4+h_j)$, and $\diam(\bT^{(n)}) \asymp 4^n$.
By a similar argument as in \pref{lem:growthHn} based on the recursive structure, it holds that for $i \in \{1,2,\ldots,n\}$,
balls of radius $\asymp 4^i$ in $\bT^{(n)}$ have volume $\asymp_K 4^i \prod_{j=1}^i (4+h_j)$, where $K \seteq \max \{ h_n : n \geq 1\}$.
Given our choice of $\{h_1,\ldots,h_{n-1}\}$, we
choose $h_n \geq 0$ as large as possible subject to
\[
   \sum_{j=1}^n \log_4(1+h_j/4) \leq (d-2) n.
\]
It is straightforward to argue that $K \lesssim_d 1$, and
\[
   \left|\sum_{j=1}^n \log_4(1+h_j/4) - (d-2) n\right| \lesssim_d 1,
\]
implying that $4^n \prod_{j=1}^n (4+h_j) \asymp_d 4^{dn}$ for every $n \geq 1$.
It follows that the graphs $\{\bT^{(n)} : n \geq 1 \}$ have uniform polynomial
growth of degree $d$.
\end{remark}

Let us now return to the graphs $\bT^n_{(b,k)}$ and analyze the effective resistance across them.

\begin{lemma}
   \label{lem:eff-resist-general}
   For every $n \geq 1$, it holds that
   \[
      \Gamma^n_{b,k} \lesssim_k \rho\left(\bT^n_{(b,k)}\right) \lesssim \Gamma^n_{b,k}\,.
   \]
\end{lemma}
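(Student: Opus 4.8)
The plan is to mirror the argument used for the special case $\bH = \bT_{\langle 3,6,3\rangle}$ in \pref{lem:left_right_reff} and \pref{cor:transient}, but now tracking the recursion more carefully so as to capture both an upper and a matching lower bound on $\rho(\bT^n_{(b,k)})$. The starting point is a recursive decomposition of $\bT^n_{(b,k)} = \bT_{(b,k)} \tileprod \bT^{n-1}_{(b,k)}$: each of the $b$ columns of $\bT_{(b,k)}$ of height $\gamma_i^{(b,k)}$ gets replaced by a vertical stack of $\gamma_i^{(b,k)}$ scaled copies of $\bT^{n-1}_{(b,k)}$. Since $\gamma_1^{(b,k)} = \gamma_b^{(b,k)} = b$, both the leftmost and rightmost columns are stacks of $b$ copies of $\bT^{n-1}_{(b,k)}$, which matches $|\scrL(\bT^n_{(b,k)})| = |\scrR(\bT^n_{(b,k)})| = b^n$ from the analogue of \pref{obs:LR}. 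A left-to-right current in $\bT^n_{(b,k)}$ must cross all $b$ columns in sequence, and within a column of height $h$ the $h$ stacked copies of $\bT^{n-1}_{(b,k)}$ carry the current in parallel.

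For the \textbf{upper bound}, I would apply \pref{lem:side_product_reff} (or its iterated form) column by column. Crossing the $i$th column — a vertical stack of $\gamma_i^{(b,k)}$ copies of $\bT^{n-1}_{(b,k)}$ whose horizontal resistances add in parallel — contributes roughly $\rho(\bT^{n-1}_{(b,k)})/\gamma_i^{(b,k)}$, after accounting for the scaling by $b$ in the width that enters the effective resistance. Observation~\ref{obs:column}(b), that consecutive column heights divide one another, guarantees that the hypotheses of \pref{def:concat} and \pref{lem:side_product_reff} are met at each concatenation (the relevant side-tiles have matching heights, so the interface-flow bound applies). Summing the per-column contributions and the $b^{-n}$-type interface terms yields a recursion of the shape
\[
   \rho\left(\bT^n_{(b,k)}\right) \leq \Gamma_{b,k}\, \rho\left(\bT^{n-1}_{(b,k)}\right) + O\!\left(b^{-n}\right),
\]
using $\Gamma_{b,k} = \sum_{i=1}^b 1/\gamma_i^{(b,k)}$. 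Since $\Gamma_{b,k} \geq b \cdot \frac{1}{bk}\cdot(\text{something}) $ — more precisely one checks $\Gamma_{b,k} \gtrsim 1/\sqrt{k}$ or at any rate $\Gamma_{b,k} \gg b^{-1}$ — the geometric sum is dominated by its leading term and unrolling the recursion gives $\rho(\bT^n_{(b,k)}) \lesssim \Gamma^n_{b,k}$; the implicit constant is absolute once we know $\Gamma_{b,k} b \geq 1+c$, which holds because $\gamma_1 = \gamma_b = b$ forces two of the summands to equal $1/b$.

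For the \textbf{lower bound}, I would use Nash--Williams / the series law: any left-to-right flow in $\bT^n_{(b,k)}$ must pass, for each $i$, through the cutset separating column $i$ from column $i+1$. Restricting attention to a single ``horizontal fiber'' and using that the $\gamma_i^{(b,k)}$ copies of $\bT^{n-1}_{(b,k)}$ stacked in column $i$ are crossed in parallel, each column forces a resistance of at least $\asymp \rho(\bT^{n-1}_{(b,k)})/\gamma_i^{(b,k)}$ (here the non-degeneracy of $\bT_{(b,k)}$ and the degree bound $\alpha_{\bT_{(b,k)}} \leq |\gamma|/b = k$ from \pref{lem:alphaHabn} control the constants, which is where the $\lesssim_k$ enters). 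Summing over $i$ via the series law gives $\rho(\bT^n_{(b,k)}) \gtrsim_k \Gamma_{b,k}\,\rho(\bT^{n-1}_{(b,k)})$, and induction yields $\rho(\bT^n_{(b,k)}) \gtrsim_k \Gamma^n_{b,k}$ (the base case $n=1$ being a finite explicit tiling). I expect the \textbf{main obstacle} to be making the column-by-column ``parallel copies'' reasoning rigorous for the lower bound: one must argue that an optimal unit flow from $\scrL$ to $\scrR$ genuinely decomposes across the $\gamma_i^{(b,k)}$ stacked sub-tilings in each column without losing more than constant factors, and that the interface terms do not conspire to cancel the main term — this is exactly where Observation~\ref{obs:column} and the geometric-series domination $\Gamma_{b,k} \cdot b > 1$ are needed, and where one must be careful that the $k$-dependence only appears in the lower bound and not the upper one.
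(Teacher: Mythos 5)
Your upper bound follows the paper's route and is essentially correct: decompose $\bT^n_{(b,k)} = \bA_1 \tilecat \cdots \tilecat \bA_b$ with each $\bA_i$ a vertical stack of $\gamma_i^{(b,k)}$ copies of $\bT^{n-1}_{(b,k)}$, use the parallel law $\rho(\bA_i) \leq \rho(\bT^{n-1}_{(b,k)})/\gamma_i^{(b,k)}$ together with \pref{lem:side_product_reff} and \pref{obs:column}(a) to get the recursion $\rho(\bT^n_{(b,k)}) \leq \Gamma_{b,k}\,\rho(\bT^{n-1}_{(b,k)}) + b^{1-n}$, and unroll it using $\Gamma_{b,k} \geq 2/b$ so that the geometric tail from the $b^{1-n}$ terms is absorbed with an absolute constant.

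Your lower bound, however, has a genuine gap. You propose the per-step estimate $\rho(\bT^n_{(b,k)}) \gtrsim_k \Gamma_{b,k}\,\rho(\bT^{n-1}_{(b,k)})$ and then ``induction yields $\rho(\bT^n_{(b,k)}) \gtrsim_k \Gamma^n_{b,k}$.'' But if each application of this recursion costs a multiplicative constant $c_k < 1$ (and you explicitly say the $\lesssim_k$ enters at each column), then $n$ iterations accumulate a factor $c_k^{\,n}$, which is \emph{not} $\gtrsim_k 1$; it decays to zero with $n$. A recursive lower bound only closes if the per-step loss is exactly $1$. Moreover, the per-step estimate itself is not justified: the stack $\bA_i$ contains additional vertical edges joining its $\gamma_i^{(b,k)}$ copies of $\bT^{n-1}_{(b,k)}$, and these extra edges can only \emph{decrease} resistance, so the parallel law gives $\rho(\bA_i) \leq \rho(\bT^{n-1}_{(b,k)})/\gamma_i^{(b,k)}$ but not the reverse inequality you need. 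The paper sidesteps both problems with a non-recursive Nash--Williams argument over the $b^n$ individual columns of tiles $K_1,\ldots,K_{b^n}$ of $\bT^n_{(b,k)}$: since degrees are $\lesssim k$, the cutset between adjacent columns has total conductance $\lesssim_k |K_i|$, so
\[
   \rho\left(\bT^n_{(b,k)}\right) \;\gtrsim_k\; \sum_{i=2}^{b^n-1} \frac{1}{|K_i|} \;\gtrsim\; \sum_{i=1}^{b^n} \frac{1}{|K_i|}\,,
\]
and the purely combinatorial identity $\sum_{i=1}^{b^n} 1/|K_i| = \Gamma^n_{b,k}$ (the column sizes multiply under the tile product, as a one-line induction on $n$ shows) then finishes the proof with a \emph{single} $k$-dependent constant — no accumulation. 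If you want to keep a recursive flavor, you would have to prove the one-step estimate with constant exactly $1$, which there is no reason to expect; the direct identity is the cleaner route.
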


\begin{proof}
   Fix $n \geq 2$ and write $\bT^n_{(b,k)} = \bT_{(b,k)} \tileprod \bT^{n-1}_{(b,k)}$ as
   $\bA_1 \tilecat \bA_2 \tilecat \cdots \tilecat \bA_b$ where, for $1 \leq i \leq b$, 
   each $\bA_i$ is a vertical stack of $\gamma_i^{(b,k)}$ copies of $\bT^{n-1}_{(b,k)}$.
   Since $\rho(\bA_i) = \rho(\bT_{(b,k)}^{n-1})/\gamma_i^{(b,k)}$ by the parallel law for
   effective resistances, applying \pref{lem:side_product_reff} to $\bA_1 \tilecat \bA_2 \tilecat \cdots \tilecat \bA_b$ gives
   \begin{align*}
      \rho\left(\bT^n_{(b,k)}\right) &\leq \sum_{i-1}^b \rho(\bT_{(b,k)}^{n-1})/\gamma_i^{(b,k)} +
      \sum_{i=1}^{b-1} \frac{1}{\min\left(\scrR(\bA_i),\scrL(\bA_{i+1})\right)} 
                                     \leq \rho(\bT^{n-1}_{(b,k)}) \Gamma_{b,k} + b^{1-n},
   \end{align*}
   where in the second inequality we have employed $\min\left(\scrR(\bA_i),\scrL(\bA_{i+1})\right) \geq b^{n}$
   which follows from \pref{obs:column}(a).
   Finally, observe that $\Gamma_{b,k} \geq 1/\gamma_1^{(b,k)} + 1/\gamma_b^{(b,k)} = 2/b$, and therefore
   the desired upper bound follows by induction.

   For the lower bound, note that since the degrees in $\bT^n_{(b,k)}$ are bounded by $k$, the Nash-Williams inequality (see,
   e.g., \cite[\S 5]{lp:book}) gives
   \begin{equation}\label{eq:nash-williams}
      \rho(\bT^n_{(b,k)}) \gtrsim_k \sum_{i=2}^{b^n-1} \frac{1}{|K_i|} \gtrsim \sum_{i=1}^{b^n} \frac{1}{|K_i|} = \Gamma_{b,k}^n,
   \end{equation}
   where $K_i$ is the $i$th column of rectangles in $\bT^n_{(b,k)}$, and the last equality
   follows by a simple induction.
\end{proof}

The next result establishes \pref{thm:transient-intro}. 

\begin{theorem}
   \label{thm:eff-resist-general}
   For every $k > b$,
   the graphs $\cT_n^{(b,k)} \seteq G\left(\bT^0_{(b,k)} \tilecat \bT^1_{(b,k)} \tilecat \cdots \tilecat \bT^n_{(b,k)}\right)$
   satisfy
   \begin{equation}\label{eq:cor-gen-resist}
      \sup_{n \geq 1} \rho\left(\cT_n^{(b,k)}\right) < \infty.
   \end{equation}
   Hence the limit graph $\cT_{\infty}^{(b,k)}$ is transient.
   Moreover, $\cT_{\infty}^{(b,k)}$ has uniform polynomial growth of degree $\mathsf{d}_g(b,k)$.
\end{theorem}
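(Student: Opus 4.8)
The plan is to mirror the analysis of \pref{sec:resist}, with $\bH^n$ replaced by $\bT^n_{(b,k)}$ and the generalized estimates \pref{lem:eff-resist-general} and \pref{lem:growthHabn} used in place of \pref{lem:left_right_reff} and \pref{lem:growthHn}; recall the standing assumptions $k \geq b \geq 4$. To bound $\rho(\cT_n^{(b,k)})$, write $\bT^0_{(b,k)} \tilecat \cdots \tilecat \bT^n_{(b,k)} = \bigl(\bT^0_{(b,k)} \tilecat \cdots \tilecat \bT^{n-1}_{(b,k)}\bigr) \tilecat \bT^n_{(b,k)}$. As in the $\bH^n$ case, the rightmost column $\scrR(\cT_{n-1}^{(b,k)}) = \scrR(\bT^{n-1}_{(b,k)})$ consists of $b^{n-1}$ rectangles of common height $b^{-(n-1)}$ and the leftmost column $\scrL(\bT^n_{(b,k)})$ consists of $b^n$ rectangles of common height $b^{-n}$ (the cardinalities are from \pref{obs:column}(a)), so \pref{lem:side_product_reff} applies and yields $\rho(\cT_n^{(b,k)}) \leq \rho(\cT_{n-1}^{(b,k)}) + \rho(\bT^n_{(b,k)}) + b^{-n}$. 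Iterating from $\rho(\cT_0^{(b,k)}) = 0$ (the tiling $\bT^0_{(b,k)}$ is a single tile) and invoking the upper bound $\rho(\bT^j_{(b,k)}) \lesssim \Gamma_{b,k}^j$ of \pref{lem:eff-resist-general}, we obtain
\[
   \rho\bigl(\cT_n^{(b,k)}\bigr) \;\lesssim\; \sum_{j=1}^n \bigl(\Gamma_{b,k}^j + b^{-j}\bigr), \qquad n \geq 1 .
\]

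The crux is the elementary observation that $\Gamma_{b,k} < 1$ precisely when $k > b$, equivalently when $\mathsf{d}_g(b,k) = \log_b(bk) > 2$: of the $b$ entries of $\gamma^{(b,k)}$, three equal $b$ and the other $b-3$ equal $\lceil (k-3)/(b-3)\rceil b$ or $\lfloor (k-3)/(b-3)\rfloor b$, so writing $k-3 = q(b-3)+r$ with $0 \leq r < b-3$ one computes $\Gamma_{b,k} = 3/b + r/((q+1)b) + (b-3-r)/(qb)$, which equals $1 - r/(2b)$ when $q=1$ and is at most $(b+3)/(2b) < 1$ when $q \geq 2$; since $k > b$ holds exactly when ``$q \geq 2$, or $q = 1$ and $r > 0$'', this yields the dichotomy. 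Hence for $k > b$ the right-hand side of the displayed bound is at most the finite constant $\Gamma_{b,k}/(1-\Gamma_{b,k}) + 1/(b-1)$, independent of $n$, which is \eqref{eq:cor-gen-resist}. Transience of $\cT_\infty^{(b,k)}$ then follows from \pref{thm:bdd-energy} exactly as in \pref{cor:transient}: take $v = \rho$ and the exhaustion $V_n \seteq V(\cT_n^{(b,k)})$, observe that any path leaving $V_n$ inside $\cT_\infty^{(b,k)}$ must pass through $\scrR(\bT^n_{(b,k)})$, and extend a near-optimal flow realizing $\rho(\cT_n^{(b,k)})$ across the boundary edges into $\scrL(\bT^{n+1}_{(b,k)}) \subseteq V(\cT_\infty^{(b,k)}) \setminus V_n$ at an extra energy cost of at most $\sum_{A \in \scrR(\bT^n_{(b,k)})} b^{-2n} = b^{-n} \leq 1$, so that $\reff^{\cT_\infty^{(b,k)}}\bigl(\{\rho\} \leftrightarrow V(\cT_\infty^{(b,k)}) \setminus V_n\bigr) \leq \sup_m \rho(\cT_m^{(b,k)}) + 1 < \infty$.

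For the volume-growth claim I would transcribe the proof of \pref{lem:Hn-uniform-growth}, using \pref{lem:HabnDiam} in place of \pref{lem:example_diam} (so that $|\bT^n_{(b,k)}| = (bk)^n$ and $\diam(G(\bT^n_{(b,k)})) \asymp b^n$, i.e.\ radius-$b^n$ balls have volume $\asymp (b^n)^{\mathsf{d}_g(b,k)}$) and \pref{lem:growthHabn} in place of \pref{lem:growthHn}. For a vertex $v$ lying in the $m$-th factor $\bT^m_{(b,k)}$ of the concatenation: the lower bound $|B(v,r)| \gtrsim r^{\mathsf{d}_g(b,k)}$ comes from \pref{lem:growthHabn} applied inside that factor when $r \lesssim b^m$, and from the inclusion $\bT^\ell_{(b,k)} \subseteq B(v,r)$ once $r \gtrsim b^\ell$ for $\ell \geq m$; the matching upper bound comes from the fact that $B(v,r)$ meets only $O(1)$ neighbouring factors when $r$ is small and lies within $\bigcup_{j \leq \ell + O(1)} \bT^j_{(b,k)}$ when $r \asymp b^\ell$, bounding each intersection by \pref{lem:growthHabn}. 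Since the implied constants in \pref{lem:growthHabn} depend only on $|\gamma^{(b,k)}|/b = k$, they are uniform across all factors, giving $|B_{\cT_\infty^{(b,k)}}(v,r)| \asymp_{b,k} r^{\mathsf{d}_g(b,k)}$ for every $v$ and every $r \geq 1$. I expect the only real obstacle to be the bookkeeping behind the equivalence $\Gamma_{b,k} < 1 \iff k > b$, since this is the sole place the hypothesis $k > b$ is used; the remainder is a mechanical replay of the arguments in \pref{sec:resist} and of \pref{lem:Hn-uniform-growth}.
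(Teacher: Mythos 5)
Your argument is correct and follows the paper's approach essentially verbatim: apply \pref{lem:side_product_reff} across the concatenation together with \pref{obs:column}(a) and \pref{lem:eff-resist-general} to get $\rho\bigl(\cT_n^{(b,k)}\bigr) \lesssim \sum_{j \leq n}\bigl(\Gamma_{b,k}^j + b^{-j}\bigr)$, use $\Gamma_{b,k}<1$ for $k>b$ to sum the series, invoke \pref{thm:bdd-energy}, and transfer the growth bound as in \pref{lem:Hn-uniform-growth}. The one cosmetic difference is your verification of $\Gamma_{b,k}<1$: you do an explicit case analysis on $q=\lfloor(k-3)/(b-3)\rfloor$ and $r=(k-3)\bmod(b-3)$, whereas the paper simply notes $\min(\gamma^{(b,k)})=b$ and $\max(\gamma^{(b,k)})>b$, so $\Gamma_{b,k}=\sum_{i=1}^b 1/\gamma_i^{(b,k)}<b\cdot(1/b)=1$ in one line; both are valid.
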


\begin{proof}
   Employing \pref{lem:side_product_reff}, \pref{obs:column}(a),
     and \pref{lem:eff-resist-general} together yields
     \[
      \rho\left(\cT_n^{(b,k)}\right) \lesssim \sum_{j=1}^n \left(\Gamma_{b,k}^j + b^{-j}\right).
     \]
     For $k > b$, we have $\max(\gamma^{(b,k)}) > b$ and $\min(\gamma^{(b,k)})=b$, hence $\Gamma_{b,k} < 1$,
     verifying \eqref{eq:cor-gen-resist}.
     Now \pref{thm:bdd-energy} yields transience of $\cT_{\infty}^{(b,k)}$.

     Uniform polynomial growth of degree $\mathsf{d}_g(b,k)$
     follows from \pref{cor:sequence_log_growth} as in the proof of \pref{lem:Hn-uniform-growth}.
\end{proof}

\subsection{The distributional limit}
\label{sec:dl}

Fix $k \geq b \geq 4$ and take $G_n \seteq G(\bT_{(b,k)}^n)$.
Since the degrees in $\{G_n\}$ are uniformly bounded, the sequence
has a subsequential distributional limit, and in all arguments
that follow, we could consider any such limit.
But let us now argue that if $\mu_n$ is the law of $(G_n,\rho_n)$
with $\rho_n \in V(G_n)$ chosen according to the stationary measure,
then the measures $\{\mu_n : n \geq 0\}$ have a distributional limit.

\begin{lemma}\label{lem:gbk}
   For any $k \geq b \geq 4$, there is a reversible random graph $(G_{b,k},\rho)$ such that
   $\{(G_n,\rho_n)\} \todl (G_{b,k},\rho)$.
   Moreover, almost surely $G_{b,k}$ has uniform polynomial volume growth of degree $\mathsf{d}_g(b,k)$.
\end{lemma}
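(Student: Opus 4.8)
The plan is to prove that for every radius $r \ge 1$ the law of the rooted ball $B_{G_n}(\rho_n, r)$ stabilizes as $n \to \infty$. Since convergence in the $\dloc$-weak topology is equivalent to convergence of $\mathrm{law}\!\left(B_{G_n}(\rho_n,r)\right)$ for every $r$, and the space of probability measures on the complete separable space $(\rgraphs, \dloc)$ is itself complete, it then follows that $\{\mathrm{law}(G_n,\rho_n)\}$ is Cauchy, hence converges to some $\mu$; setting $(G_{b,k},\rho) \sim \mu$ gives $\{(G_n,\rho_n)\}\todl(G_{b,k},\rho)$. Reversibility of $(G_{b,k},\rho)$ is then automatic: each $(G_n,\rho_n)$ is reversible (a stationary root in a finite connected graph), and reversibility is preserved under $\todl$-limits, as recalled in \pref{sec:graph-limits}.

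The engine is the self-similarity $\bT^n_{(b,k)} = \bT^{n-m}_{(b,k)}\tileprod\bT^m_{(b,k)}$. By associativity of $\tileprod$, the tiles of $\bT^n_{(b,k)}$ lying inside a fixed tile (a ``gadget'') $\hat A \in \bT^{n-m}_{(b,k)}$ induce in $G_n$ a copy of $G(\bT^m_{(b,k)})$, and the only further edges of $G_n$ join tiles in adjacent gadgets across their common boundary. Fix $m \le n$ and let $\cD_m \subseteq \bT^m_{(b,k)}$ consist of those tiles $A'$ for which $B_{G(\bT^m_{(b,k)})}(A', 2r)$ contains no tile meeting $\partial[0,1]^2$. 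A short induction on path length shows that if a tile $A$ of some gadget of $G_n$ occupies a position $A'\in\cD_m$, then every $G_n$-path of length at most $2r$ started at $A$ stays inside that gadget and visits only tiles at non-boundary positions; consequently $B_{G_n}(A,r)\cong B_{G(\bT^m_{(b,k)})}(A',r)$ as rooted graphs and $\deg_{G_n}(A)=\deg_{G(\bT^m_{(b,k)})}(A')$ — and this holds uniformly over all gadgets and over $n$.

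Next I bound the probability that $\rho_n$ avoids a $\cD_m$-position. With $D \seteq 8k$ the uniform degree bound (\pref{lem:degree_bound} together with \pref{lem:alphaHabn}), every tile outside $\cD_m$ lies within $G(\bT^m_{(b,k)})$-distance $2r$ of a tile meeting $\partial[0,1]^2$, so $|\bT^m_{(b,k)}\setminus\cD_m| \le D^{2r}\,\partial_m$, where $\partial_m$ is the number of boundary tiles of $\bT^m_{(b,k)}$. By \pref{obs:column}(a) the two extreme columns contribute $b^m$ tiles each, and a short induction through $\bT^m_{(b,k)}=\bT_{(b,k)}\tileprod\bT^{m-1}_{(b,k)}$ bounds the top and bottom rows by $\lesssim_b b^m$ as well; hence $\partial_m\lesssim_b b^m$ and $|\bT^m_{(b,k)}\setminus\cD_m|\lesssim_{b,k,r} b^m$. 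Since $|\bT^m_{(b,k)}|=(bk)^m$ by \pref{lem:HabnDiam}, there are $(bk)^{n-m}$ gadgets, and $\rho_n$ is degree-biased with degrees in $[1,D]$, we get
\[
   \Pr\!\left[\rho_n \text{ not at a } \cD_m\text{-position}\right]
   \;\le\; D\cdot\frac{(bk)^{n-m}\,|\bT^m_{(b,k)}\setminus\cD_m|}{(bk)^{n}}
   \;\lesssim_{b,k,r}\; k^{-m},
\]
uniformly in $n\ge m$. On the complementary event, $B_{G_n}(\rho_n,r)\cong B_{G(\bT^m_{(b,k)})}(\sigma,r)$, where — since every gadget contributes the same set of $\cD_m$-positions with the same $G(\bT^m_{(b,k)})$-degrees — $\sigma$ has the law $\nu_m$ of a degree-biased uniform tile of $\cD_m\subseteq\bT^m_{(b,k)}$, which does not depend on $n$. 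Hence $d_{\mathrm{TV}}\!\big(\mathrm{law}(B_{G_n}(\rho_n,r)),\,\nu_m\big)\lesssim_{b,k,r} k^{-m}$ for all $n\ge m$; letting $m$ grow shows $\{\mathrm{law}(B_{G_n}(\rho_n,r))\}_n$ is Cauchy, and the existence of the limit $(G_{b,k},\rho)$ follows as in the first paragraph.

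For the volume growth, fix $r\in\N$ and take $n$ large enough that $\diam(G_n)\ge b^n\ge r$. By \pref{cor:sequence_log_growth} there are constants $0<c_k<C_k$, independent of $n$, with $|B_{G_n}(\rho_n,r)|\in[c_k r^{\mathsf{d}_g(b,k)},\,C_k r^{\mathsf{d}_g(b,k)}]$ deterministically. The map $(G,\rho)\mapsto|B_G(\rho,r)|$ is bounded and continuous on $\rgraphs$, so $|B_{G_n}(\rho_n,r)|\Rightarrow|B_{G_{b,k}}(\rho,r)|$, and a deterministic two-sided bound on a convergent sequence passes to the limit: almost surely $|B_{G_{b,k}}(\rho,r)|\in[c_k r^{\mathsf{d}_g(b,k)},\,C_k r^{\mathsf{d}_g(b,k)}]$. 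Intersecting over $r\in\N$ and interpolating to real $r$ by monotonicity gives, almost surely, $|B_{G_{b,k}}(\rho,r)|\asymp_k r^{\mathsf{d}_g(b,k)}$ for all $r\ge1$. To pass from the root to all vertices: $(G_{b,k},\rho)$ is reversible, so $(G_{b,k},X_t)$ has the law of $(G_{b,k},\rho)$ for every $t$, whence almost surely the estimate holds at every $X_t$; and being a distributional limit of finite planar graphs of bounded degree, $(G_{b,k},\rho)$ is almost surely recurrent (\cite{bs01}), so the walk visits every vertex and the estimate holds at all vertices simultaneously. I expect the main obstacle to be the localization step of the second paragraph — making the path-length induction and the boundary-tile count precise, so that the $r$-ball about a ``deep'' tile is genuinely a single finite rooted graph carrying overwhelming probability; the remaining ingredients (completeness of the space of laws, stability of reversibility under $\todl$, and the transfer of an almost-sure root property to all vertices) are standard.
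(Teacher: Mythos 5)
Your proof is correct, and the overall strategy (localize the root deep inside a self-similar piece, bound the probability of landing near a boundary, conclude TV-convergence of the $r$-ball laws for each fixed $r$) is the same as the paper's, but several of the details differ in genuinely instructive ways. The paper works with the uniformly random root $\hat\rho_n$ and invokes \cite[Prop.~2.5]{bc12} at the end to pass to the degree-biased root, whereas you handle the degree-biasing directly by paying a factor of the degree bound $D$ in the probability estimate. The paper couples $(G_n,\hat\rho_n)$ with $(G_{n-1},\hat\rho_{n-1})$ through the single-step decomposition $\bT^n=\bT\circ\bT^{n-1}$ and telescopes the resulting geometric TV bounds; you instead decompose at a free scale $m\le n$, identify an explicit candidate law $\nu_m$ (degree-biased uniform on the ``deep'' positions $\cD_m\subset\bT^m$), and show every $\mu_{n,r}$ with $n\ge m$ is within $O_k(k^{-m})$ of it. Your version is slightly cleaner conceptually because the target is exhibited rather than reached by telescoping; the paper's is a bit shorter. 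Two minor points: you bound $|N_{2r}(\partial\bT^m)|$ by $D^{2r}\,|\partial\bT^m|$ via the degree bound, while the paper uses the volume estimate $\lesssim_k b^m r^{\mathsf d}$ from \pref{cor:sequence_log_growth} — both are fine since $r$ is fixed. And for the last sentence of the lemma the paper simply asserts it is immediate from \pref{cor:sequence_log_growth}; you give a full argument transferring the almost-sure root estimate to all vertices via reversibility plus almost-sure recurrence of the limit. That works, though it can be done without invoking recurrence: for each $T$, the event ``$|B_G(v,r)|\asymp_k r^{\mathsf d}$ for all $v\in B_G(\rho,T)$ and $r\le T$'' is a local event that holds deterministically in $G_n$ once $\diam(G_n)\ge 2T$, hence almost surely in the limit, and intersecting over $T$ covers all of $V(G)$ by connectedness. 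Also, ``bounded and continuous'' is a slight overstatement — $(G,\rho)\mapsto|B_G(\rho,r)|$ is continuous (locally constant) but not bounded on $\rgraphs$; continuity alone suffices for the weak-limit step, or one can note boundedness on the degree-$\le D$ subspace that carries all the mass.
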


\begin{proof}
   It suffices to prove that $\{(G_n,\rho_n)\}$ has a limit $(G_{b,k},\rho)$.
   Reversibility of the limit then follows automatically
   (as noted in \pref{sec:graph-limits}), and the degree of growth
   is an immediate consequence of \pref{cor:sequence_log_growth}.
   It will be slightly easier to show that the sequence $\{(G_n,\hat{\rho}_n)\}$ has
   a distributional limit, with $\hat{\rho}_n \in V(G_n)$ chosen uniformly at random. As noted in \pref{sec:graph-limits},
   the claim then follows from \cite[Prop. 2.5]{bc12} (the correspondence between unimodular and reversible random graphs
   under degree-biasing).

   Let $\mu_{n,r}$ be the law of $B_{G_n}(\hat{\rho}_n,r)$.  It suffices to show that
   the measures $\{\mu_{n,r} : n \geq 0\}$ converge for every fixed $r \geq 1$, and then a standard application
   of Kolmogorov's extension theorem proves the existence of a limit.

   For a tiling $\bT$ of a rectangle $R$, let $\partial \bT$ denote the set of tiles
   that intersect some side of $R$.
   Define the neighborhood
   $N_r(\partial \bT_{(b,k)}^n) \seteq \{ v \in \bT^n_{(b,k)} : d_{G_n}(v, \partial \bT_{(b,k)}^n) \leq r\}$
   and abbreviate $\mathsf{d} = \mathsf{d}_g(b,k)$.
   Then $|\partial \bT^n_{(b,k)}| \leq 4b^n$, so \pref{cor:sequence_log_growth} gives
   \[
      \left|N_r(\partial \bT^n_{(b,k)})\right| \lesssim_k b^n r^{\mathsf{d}}\,.
   \]
   Since $|\bT^n_{(b,k)}| = (bk)^n$, it follows that
   \[
      1-\Pr\left[\cE_{r,n}\right] \lesssim_k k^{-n} r^\mathsf{d},
   \]
   where $\cE_{r,n}$ is the event $\{ B_{G_{n}}(\hat{\rho}_{n}, r) \cap \partial \bT^{n}_{(b,k)} = \emptyset \}$.

   Now write $\bT^n_{(b,k)} = \bT_{(b,k)} \circ \bT^{n-1}_{(b,k)}$, and note that $\hat{\rho}_n$ falls into one
   of the $|\gamma^{(b,k)}|=bk$ copies of $G_{n-1}$ and is, moreover, uniformly distributed in that copy.
   Therefore we can naturally couple $(G_n,\hat{\rho}_n)$ and $(G_{n-1},\hat{\rho}_{n-1})$ by
   identifying $\hat{\rho}_n$ with $\hat{\rho}_{n-1}$.
   Moreover, conditioned on the event $\cE_{r,n-1}$,
   we can similarly couple $B_{G_n}(\hat{\rho}_n,r)$ and $B_{G_{n-1}}(\hat{\rho}_{n-1},r)$.

   It follows that, for every $r \geq 1$,
   \[
      d_{TV}\left(\mu_{n-1,r},\mu_{n,r}\right) \leq 1-\Pr[\cE_{r,n-1}] \lesssim_k k^{-n} r^{\mathsf{d}}\,.
   \]
   As the latter sequence is summable, it follows that $\{\mu_{n,r}\}$ converges for every fixed $r \geq 1$,
   completing the proof.
\end{proof}

\subsection{Speed of the random walk}
\label{sec:rw-speed}

Let $\{X_t\}$ denote the random walk on $G_{b,k}$ with $X_0 = \rho$.
Our first goal will be to prove a lower bound on the speed of the walk.
Define:
\begin{align*}
   \mathsf{d}_w(b,k) &\seteq \mathsf{d}_g(b,k) + \log_b(\Gamma_{b,k}).
\end{align*}
We will show that $\mathsf{d}_w(b,k)$ is related to the speed exponent
for the random walk.

\begin{theorem}\label{thm:Gbk-speed}
   Consider any $k \geq b \geq 4$.
   It holds that for all $T \geq 1$,
   \begin{equation}\label{eq:Gbk-speed}
      \E\left[d_{G_{b,k}}\!\left(X_T,X_0\right) \mid X_0 = \rho\right] \gtrsim_k T^{1/\mathsf{d}_w(b,k)}.
   \end{equation}
\end{theorem}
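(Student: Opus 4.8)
The plan is to establish the speed lower bound by exhibiting an explicit test function (a "potential") on $G_{b,k}$ that grows linearly along the recursive scales and whose Dirichlet energy controls the displacement of the walk, combined with the effective-resistance estimates already available from \pref{lem:eff-resist-general}. Concretely, I would use the standard relation between the speed of a reversible random walk and effective resistances to balls: if $\{X_t\}$ is the stationary walk started at $\rho$, then for any radius $r$, the probability that $d_{G_{b,k}}(X_T,X_0) \geq r$ can be bounded below in terms of $T$ and the resistance $\reff^{G_{b,k}}\!\left(B(\rho,r) \lra V \sm B(\rho,2r)\right)$; the quantity $\rho(\cdot)$ studied in \pref{lem:eff-resist-general} is exactly (up to the degree-bounded normalization) this annular resistance at the scale of one "block" $\bT^n_{(b,k)}$. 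The first step is therefore to translate \pref{lem:eff-resist-general} and the recursive decomposition of $G_{b,k}$ into an annular resistance estimate: the resistance from $B_{G_{b,k}}(\rho,b^j)$ to the complement of $B_{G_{b,k}}(\rho,b^{j+1})$ is $\asymp_k \Gamma_{b,k}^j$, since a ball of radius $\asymp b^j$ corresponds to roughly one copy of $\bT^j_{(b,k)}$ sitting inside $\bT^{j+1}_{(b,k)}$, and crossing it costs resistance $\asymp_k \Gamma_{b,k}^j$ by \pref{lem:eff-resist-general} together with the parallel law.

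The second step is to run the standard "resistance versus escape time" argument. Fix a scale $j$ and let $r = b^j$. By the commute-time identity (or directly by the variational characterization of effective resistance and the fact that $|B_{G_{b,k}}(\rho,2r)| \asymp_k r^{\mathsf{d}_g(b,k)}$ by \pref{cor:sequence_log_growth}), the expected time for the walk started at $\rho$ to exit $B_{G_{b,k}}(\rho,2r)$ is $\asymp_k r^{\mathsf{d}_g(b,k)} \cdot \reff^{G_{b,k}}\!\left(B(\rho,r)\lra V\sm B(\rho,2r)\right) \asymp_k r^{\mathsf{d}_g(b,k)} \Gamma_{b,k}^j = b^{j\,\mathsf{d}_g(b,k)} \Gamma_{b,k}^j = b^{j\,\mathsf{d}_w(b,k)}$, using $\mathsf{d}_w(b,k) = \mathsf{d}_g(b,k)+\log_b \Gamma_{b,k}$. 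Hence if we set $T \asymp b^{j\,\mathsf{d}_w(b,k)}$, i.e. $b^j \asymp T^{1/\mathsf{d}_w(b,k)}$, then with probability bounded away from $0$ (uniformly, using a Markov/second-moment argument together with the fact that the walk cannot exit too slowly — this is where one needs the resistance to also be bounded \emph{below} by $\Gamma_{b,k}^j$, not just above) the walk has traveled distance $\gtrsim b^j \asymp T^{1/\mathsf{d}_w(b,k)}$ by time $T$. Taking expectations gives \eqref{eq:Gbk-speed}.

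The main obstacle I anticipate is making the \emph{lower} bound on displacement rigorous: an upper bound on exit time only shows the walk is likely to \emph{leave} a ball, but to lower-bound $\E[d_{G_{b,k}}(X_T,X_0)]$ one must rule out that the walk oscillates back near $\rho$ at the particular time $T$, and must also ensure that the "block" containing $\rho$ genuinely behaves like $\bT^j_{(b,k)}$ rather than being atypically close to a boundary of some larger block (this is controlled by the event $\cE_{r,n}$ from \pref{lem:gbk}, which holds with probability $1-o(1)$ for the relevant range of $r$). The clean way around this is to work on the truncated graph — restrict attention to the event that $\rho$ is deep inside a block $\bT^m_{(b,k)}$ for $m$ slightly larger than $j$, condition on that event, and apply the annular-resistance escape-time bound \emph{within that block} where all estimates are exact up to $\asymp_k$ constants; then transfer back to $G_{b,k}$ using the coupling from \pref{lem:gbk}. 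A secondary technical point is that $\Gamma_{b,k} < 1$, so $\mathsf{d}_w(b,k) < \mathsf{d}_g(b,k)$ and the exponent $1/\mathsf{d}_w(b,k)$ is genuinely larger than $1/\mathsf{d}_g(b,k)$ — one must be careful that the resistance lower bound in \pref{lem:eff-resist-general} (coming from Nash–Williams across the columns) is strong enough to feed the exit-time \emph{lower} bound that prevents the walk from escaping faster than $b^{j\,\mathsf{d}_w(b,k)}$, which is what pins the exponent to exactly $\mathsf{d}_w(b,k)$ rather than something larger.
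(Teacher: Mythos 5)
Your proposal correctly identifies the key exponent $\mathsf{d}_w(b,k) = \mathsf{d}_g(b,k)+\log_b\Gamma_{b,k}$ and the role of annular resistances, but the route you sketch has a genuine gap that your own "main obstacle" paragraph flags but does not close. Bounding the expected exit time from $B(\rho,2r)$ above by $\asymp_k r^{\mathsf{d}_w(b,k)}$ via a commute-time/resistance argument only tells you the walk is likely to have \emph{left} the ball by time $T$; it says nothing about where the walk is \emph{at time $T$}, and on a recurrent graph (which $G_{b,k}$ necessarily is, being a unimodular planar limit) the walk will return to any finite set infinitely often. Your proposed fix — conditioning on $\rho$ being deep inside a block and working on the truncated graph — addresses boundary effects in the coupling with $\bT^n_{(b,k)}$ but does nothing to rule out oscillation back toward $\rho$. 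Likewise, your remark that a resistance \emph{lower} bound is needed to "prevent the walk from escaping faster than $b^{j\mathsf{d}_w}$" is misdirected for a speed lower bound: faster escape would only help. The place a lower bound is truly needed is to argue that the post-exit position of the walk is spread out comparably to the stationary measure, which is exactly what controls oscillation — and this is where your sketch does not have an argument.

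The paper's actual proof handles this by a reduction you do not use: it collapses each column of $\bT^n_{(b,k)}$ to a single vertex to obtain a one-dimensional weighted graph $L^n_{(b,k)}$ (\pref{sec:linearized}), observes that the cylindrical symmetry of $\tilde G_n$ makes the projection of the walk on $\tilde G_n$ \emph{exactly} the walk on $L^n_{(b,k)}$, and proves the speed bound on $L^n_{(b,k)}$ first. The 1D structure is essential for two reasons. First, \pref{lem:reff-times-conductance} gives a \emph{two-sided} estimate $\reff^L(\ell_s\lra\ell_t)\cdot(c_{\ell_s}+\cdots+c_{\ell_t})\asymp_k (\Gamma_{b,k}\cdot bk)^{\log_b(t-s)}$, whereas in the full graph only the upper bound on annular resistance (\pref{lem:annular-gn}) is established, and the Nash--Williams lower bound $\gtrsim R^{-1}$ is far weaker than $\Gamma_{b,k}^{\log_b R}$ when $\Gamma_{b,k}<1/b$. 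Second, the 1D series law (\pref{obs:linear-reff}) and the classical hitting-probability formula $p_u^{v\prec w}=\reff(u\lra w)/\reff(v\lra w)$ are what make the stopping time $\tau(h)$ of \pref{lem:almost-pi} tractable: the law of $Y_{\tau(h)}$ is shown to be within a constant of stationary. Finally, the actual defeat of the oscillation problem is the reversibility inequality \eqref{eq:anytime} of \pref{lem:Lspeed}, $\E[d_L(Y_0,Y_T)]\geq\tfrac12\max_{0\leq t\leq T}\E[d_L(Y_0,Y_t)-d_L(Y_0,Y_1)]$, proved by splitting a stationary reversible walk at an even intermediate time. Without this step (or an equivalent), an exit-time bound alone cannot be converted into a displacement bound at a fixed time. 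Your plan, as written, is missing both the linearization and this reversibility trick, so the argument does not close.
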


Before proving the theorem, let us observe that it yields \pref{thm:speed-intro}.
Fix $k \geq b \geq 4$.
Observe that for any positive integer $p \geq 1$, we have
$\mathsf{d}_g(b^p,k^p)=\mathsf{d}_g(b,k)$.
On the other hand,
\begin{align}
   \mathsf{d}_g(b^p,k^p) - \mathsf{d}_w(b^p,k^p) &= -\log_{b^p}(\Gamma_{b^p,k^p}) \nonumber \\
                               &\geq - \log_{b^p}\left(3 b^{-p} + (b/k)^p\right) - o_p(1) \nonumber \\
                               &\geq \min\left(1, \log_b(k)-1\right) - o_p(1) \nonumber \\
                               &\geq \min\left(1, \mathsf{d}_g(b^p,k^p)-2\right) - o_p(1)\,. \label{eq:dw-calc}
\end{align}
So for every $\e > 0$, there is some $p=p(\e)$ such that
\[
   \mathsf{d}_w(b^p,k^p) \leq \max\left(2,\mathsf{d}_g(b^p,k^p)-1\right) + \e\,,
\]
and moreover $G_{b^p,k^p}$ almost surely has uniform polynomial growth of degree $\mathsf{d}_g(b,k)$.
Combining this with the construction of \pref{cor:sequence_log_growth} for all rational
$d \geq 2$ yields \pref{thm:speed-intro}.

\subsubsection{The linearized graphs}
\label{sec:linearized}

Fix integers $k \geq b \geq 4$ and $n \geq 1$,
and let us consider now the (weighted) graph $L=L_{(b,k)}^n$
derived from $G=G(\bT_{(b,k)}^n)$ by
identifying every column of rectangles into a single vertex.
Thus $|V(L)| = b^n$.

We connect two vertices $u,v \in V(L)$ if their corresponding columns $\cC_u$ and $\cC_v$ in $G$
are adjacent, and we define the conductances $c_{uv} \seteq |E_G(\cC_u,\cC_v)|$,
where $E_G(S,T)$ denotes the number of edges between two subsets $S,T \subseteq V(G)$.
Define additionally $c_{uu} \seteq 2 |\cC_u|$ and
\begin{align*}
   c_u &\seteq c_{uu} + \sum_{v : \{u,v\} \in E(L)} c(\{u,v\})\,.
\end{align*}

Let us order the vertices of $L$ from left to right as $V(L) = \{\ell_1, \ldots, \ell_{b^n}\}$.
The series law for effective resistances gives the following.
\begin{observation}
\label{obs:linear-reff}
	For $1 \leq i \leq t \leq j\leq b^n$, we have 
   \[
      \reff^L(\ell_i \leftrightarrow \ell_j) = \reff^L(\ell_i \leftrightarrow \ell_t) + \reff^L(\ell_t \leftrightarrow \ell_j)
   \]
\end{observation}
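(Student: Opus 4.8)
The plan is to observe that the weighted graph $L = L^n_{(b,k)}$ is, apart from self-loops, a weighted path on the vertices $\ell_1, \ell_2, \ldots, \ell_{b^n}$, and then to invoke the classical series law for resistors in series.

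First I would record the column structure of the underlying tiling: by induction on $n$, the tiling $\bT^n_{(b,k)}$ is a disjoint union of $b^n$ vertical strips of width $b^{-n}$, each of which is itself a tiling of a rectangle by rectangles. The base case is the definition of $\bT_\gamma$; for the inductive step, replacing every tile of a width-$b^{-(n-1)}$ column by an appropriately scaled copy of $\bT_{(b,k)}$ refines that column into $b$ sub-columns of width $b^{-n}$, consistently across the column because all of its tiles share the same $x$-range. Consequently, two tiles $A,B$ of $\bT^n_{(b,k)}$ lying in columns with indices $m$ and $m'$ can be adjacent in $G = G(\bT^n_{(b,k)})$ only if $|m-m'| \le 1$: tiles in columns differing by at least two occupy disjoint strips; tiles in the same column can meet only along a horizontal segment; tiles in consecutive columns can meet only along the shared vertical line. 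Contracting each column to a vertex therefore produces a graph $L$ whose only non-loop edges join $\ell_m$ to $\ell_{m+1}$.

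Next I would note that self-loops contribute nothing to the combinatorial Laplacian $L_L$ (the term for an edge $\{v,v\}$ is $c(\{v,v\})(f(v)-f(v)) = 0$), so $\reff^L$ equals the effective resistance of the weighted path $\ell_1 - \ell_2 - \cdots - \ell_{b^n}$ with the conductances $c_{\ell_m\ell_{m+1}} = |E_G(\cC_{\ell_m},\cC_{\ell_{m+1}})|$ defined above; these are positive since $G(\bT^n_{(b,k)})$ is connected by assumption. The series law (see, e.g., \cite[Ch. 2]{lp:book}) gives $\reff^L(\ell_a \leftrightarrow \ell_b) = \sum_{m=a}^{b-1} c_{\ell_m\ell_{m+1}}^{-1}$ for $a \le b$; equivalently, for $i \le t \le j$ the vertex $\ell_t$ separates $\ell_i$ from $\ell_j$, so any unit flow between them splits into a unit flow $\ell_i \to \ell_t$ and a unit flow $\ell_t \to \ell_j$ supported on disjoint edge sets, and their energies add. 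Either way the identity $\reff^L(\ell_i \leftrightarrow \ell_j) = \reff^L(\ell_i \leftrightarrow \ell_t) + \reff^L(\ell_t \leftrightarrow \ell_j)$ follows from $\sum_{m=i}^{j-1} = \sum_{m=i}^{t-1} + \sum_{m=t}^{j-1}$. The only step that needs any attention --- and hence the ``main obstacle,'' mild as it is --- is the combinatorial verification that non-consecutive columns never touch, i.e.\ that $L$ really is a path with loops; everything after that is bookkeeping.
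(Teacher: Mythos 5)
Your proof is correct and follows the same route the paper takes: the paper derives \pref{obs:linear-reff} directly from the series law for effective resistances, which is exactly what you do. The additional detail you supply---that tiles in columns with index difference at least two cannot be adjacent, so $L$ is a weighted path once self-loops are discarded, and that self-loops contribute zero to the Laplacian and hence to $\reff^L$---is the implicit verification behind the paper's one-line citation, and it is carried out correctly.
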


We will use this to bound the resistance between any pair of columns.

\begin{lemma}
\label{lem:reff-times-conductance}
	If $1 \leq s < t \leq b^n$, then
   \begin{equation}\label{eq:reff-times-goal}
   \reff^L(\ell_s \leftrightarrow \ell_t) \cdot \left(c_{\ell_s} + c_{\ell_{s+1}} + \cdots + c_{\ell_t}\right) \asymp_k (\Gamma_{b,k} \cdot bk)^{\log_b(t-s)}\,.
   \end{equation}
\end{lemma}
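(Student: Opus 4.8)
The plan is to analyze the linearized graph $L = L^n_{(b,k)}$ recursively, exploiting the self-similar structure $\bT^n_{(b,k)} = \bT_{(b,k)} \tileprod \bT^{n-1}_{(b,k)}$. The key observation is that, just as in \pref{lem:eff-resist-general}, the linearized graph $L^n_{(b,k)}$ decomposes into $b$ blocks $\bB_1, \ldots, \bB_b$ where $\bB_i$ is (the linearization of) a vertical stack of $\gamma^{(b,k)}_i$ copies of $\bT^{n-1}_{(b,k)}$; after linearization this vertical stack collapses — since all the copies in a stack occupy the same set of columns — so that $\bB_i$ is a copy of $L^{n-1}_{(b,k)}$ with all conductances (and hence all $c_u$) scaled by $\gamma^{(b,k)}_i$, and resistances scaled by $1/\gamma^{(b,k)}_i$. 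Consequently the product $\reff^L(\ell_s \leftrightarrow \ell_t) \cdot (c_{\ell_s} + \cdots + c_{\ell_t})$ is \emph{scale-invariant within a single block}: if $\ell_s, \ell_t$ both lie in the same block $\bB_i$, then the left side of \eqref{eq:reff-times-goal} for $L^n$ equals the corresponding quantity for $L^{n-1}$ on the sub-columns, because the factor $\gamma^{(b,k)}_i$ cancels.

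First I would set up notation carefully: write each index $s \in \{1, \ldots, b^n\}$ in its base-$b$ expansion to track which block at each scale it falls into, and let $j = j(s,t)$ be the coarsest scale at which $\ell_s$ and $\ell_t$ lie in different blocks — equivalently, $b^{j-1} \leq t - s$ and the two indices agree on the top $n-j$ digits. The proof then proceeds by induction on $n$, peeling off one scale at a time. If $s$ and $t$ lie in the same top-level block $\bB_i$, apply the induction hypothesis to $L^{n-1}$ inside that block and use the scaling relation above to conclude; the exponent $\log_b(t-s)$ is unchanged because the relevant separation is the same after rescaling columns. The base case and the case where $j = n$ (i.e., $s, t$ in different top-level blocks) require a direct estimate, which is where \pref{obs:linear-reff} (the series law) does the work: I would bound $\reff^L(\ell_s \leftrightarrow \ell_t)$ above and below by summing the within-block resistances plus the $O(b^{1-n})$-type bridge resistances between consecutive blocks, exactly mirroring the calculation in \pref{lem:eff-resist-general}, and bound $c_{\ell_s} + \cdots + c_{\ell_t}$ using that $\sum_u c_u$ over one copy of $\bT^{n-1}_{(b,k)}$ is $\asymp (bk)^{n-1}$ (total edge count, up to the factor-$k$ degree bound) while the stack multiplies it by $\gamma_i^{(b,k)}$, so summing over a full range of top-level blocks yields $\asymp (bk)^{n-1} \cdot |\gamma^{(b,k)}| = (bk)^n$ and summing the resistances yields $\asymp \Gamma^n_{b,k}$ — whose product is $\asymp (\Gamma_{b,k} bk)^n = (\Gamma_{b,k} bk)^{\log_b(t-s)}$ when $t - s \asymp b^n$.

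I would then assemble the general case: writing $t - s \asymp b^j$ and applying the series law to split $[\ell_s, \ell_t]$ into the portion lying in one common block at each scale down to scale $j$, the total resistance is dominated (up to a $k$-dependent constant, accounting for the degree bound feeding into Nash–Williams for the lower bound) by the resistance contribution at scale $j$, namely $\asymp \Gamma^j_{b,k}$ after the cancellation of the coarser scaling factors, and similarly $c_{\ell_s} + \cdots + c_{\ell_t} \asymp (bk)^j$ after the same cancellations — so the product is $\asymp_k (\Gamma_{b,k} bk)^j \asymp_k (\Gamma_{b,k} bk)^{\log_b(t-s)}$, as claimed. The main obstacle, I expect, is the lower bound on $\reff^L(\ell_s \leftrightarrow \ell_t)$: the upper bound on resistance follows from the explicit unit flow/series decomposition, but the matching lower bound requires a Nash–Williams cutset argument across the columns strictly between $\ell_s$ and $\ell_t$ (as in \eqref{eq:nash-williams}), and one must check that the cutset sum $\sum 1/|K_i|$ over that sub-range is $\gtrsim \Gamma^j_{b,k}$ — which is where the hypothesis $k \geq b$ and the precise definition of $\gamma^{(b,k)}$ (ensuring every column has at most $k$ rectangles, and the columns at scale $j$ within the relevant range realize the full product structure) get used. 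Handling the off-by-constant-factors bookkeeping uniformly in $n$ and $j$, rather than the conceptual structure, is the real work.
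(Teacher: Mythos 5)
Your core observations are sound and close in spirit to the paper's proof: both arguments pick out the scale $h \approx \log_b(t-s)$, exploit the self-similar decomposition $\bT^n = \bT^{n-h} \tileprod \bT^h$, use the parallel law to say that a stack of $\beta$ copies of $\bT^h$ has resistance $\rho(\bT^h)/\beta$ and mass $\beta|\bT^h|$ (so the product is scale-invariant), and use Nash--Williams for the lower bound. But there is a genuine gap in your recursive scheme. You define $j$ as the coarsest scale at which $\ell_s,\ell_t$ lie in different blocks and assert this is ``equivalently $b^{j-1}\leq t-s$,'' but that equivalence fails whenever $s,t$ straddle a coarse block boundary: for $s=b^{n-1}$ and $t=b^{n-1}+1$ one has $t-s=1$, yet they lie in different top-level blocks, so $j=n$. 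In that case your inductive step (``recurse into the common top-level block'') is unavailable, and your stated direct estimate --- summing resistances and conductances over a full range of top-level blocks to get $\asymp_k (\Gamma_{b,k}bk)^n$ --- is, as you yourself caveat, valid only when $t-s\asymp b^n$. The missing case, $s,t$ in different top-level blocks with $t-s\ll b^n$, is not covered, and there the target bound $(\Gamma_{b,k}bk)^{\log_b(t-s)}$ can be of constant order, far from $(\Gamma_{b,k}bk)^n$.

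The paper avoids this by choosing the scale from $t-s$ rather than from block membership: set $h=\lceil\log_b(t-s)\rceil$, partition into blocks $\cD_1,\ldots,\cD_{b^{n-h}}$ of width $b^h$, and observe that $\cC_s,\cC_t$ then lie in the \emph{same or two adjacent} blocks $\cD_i,\cD_{i+1}$. The adjacent case is handled via \pref{cor:side}, giving $\rho(\cD_i\tilecat\cD_{i+1})\lesssim_k\rho(\cD_i)+\rho(\cD_{i+1})$, together with the fact (via \pref{lem:alphaHabn}) that $\beta_i,\beta_{i+1}$ differ by at most a factor $k$, so the two blocks have comparable mass; the lower bound then uses $h'=\lfloor\log_b(t-s)\rfloor-1$ precisely so that a full intervening block exists for the Nash--Williams cut. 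To salvage your induction you would need, at every level, to allow $s,t$ to land in \emph{adjacent} sub-blocks and to carry a two-block estimate through the recursion --- which is essentially what \pref{cor:side} gives you in one shot, without recursing.
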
 
\begin{proof}
Let us first establish the upper bound.
Denote $h \seteq \lceil \log_b(t-s) \rceil$, $\bT \seteq \bT_{(b,k)}$, and $\Gamma \seteq \Gamma_{b,k}$.
Write $\bT^n = \bT^{n-h} \tileprod \bT^h$ and
along this decomposition,
partition $\bT^n$ into $b^{n-h}$ sets of tiles $\cD_1, \ldots, \cD_{b^{n-h}}$, where
each $\cD_i$ is formed from adjacent columns
\begin{equation}\label{eq:tilepart}
   \cD_i \seteq \cC_{(i-1)\cdot b^h + 1} \cup \cdots \cup \cC_{i\cdot b^h}\,.
\end{equation}
Suppose that, for $1 \leq i \leq b^{n-h}$, the tiling $\bT^{n-h}$ has $\beta_i$ tiles
in its $i$th column.
Then $\cD_i$ consists of $\beta_i$ copies of $\bT^h$ stacked atop each other.

Thus we have $|\cD_i| = \beta_i |\bT^h|$, and furthermore $\rho(\cD_i) \leq \rho(\bT^h)/\beta_i$,
hence
\begin{equation}
\label{eq:gammatimesa}
\rho(\cD_i) \cdot |\cD_i| \leq \rho(\bT^h) \cdot |\bT^h| \lesssim \Gamma^h \cdot (bk)^h,
\end{equation}
where the last inequality uses \pref{lem:eff-resist-general}.

Let $1 \leq i \leq j \leq b^{n-h}$ be such that $\cC_s \subseteq \cD_i$ and $\cC_t \subseteq \cD_j$.
Since $t \leq s + b^h$, and each set $\cD_i$ consists of $b^h$ consecutive columns,
it must be that $j \leq i+1$.
If $i=j$, then $|\cC_s|+\cdots+|\cC_t| \leq |\cD_i|$, and
\pref{obs:linear-reff} gives
\[
   \reff^L(\ell_s \leftrightarrow \ell_t) \leq \rho(\cD_i),
\]
thus \eqref{eq:gammatimesa} yields \eqref{eq:reff-times-goal}, as desired.

Suppose, instead, that $j=i+1$.
From \pref{lem:alphaHabn}, we have $\alpha_{\bT^{n-h}} \leq |\gamma|/b \leq k$. 
Therefore $1/k \leq \beta_{i+1}/\beta_i \leq k$.
Since the degrees in $G(\bT^n_{(b,k)})$ are bounded by $k$,
this yields the following claim, which we will also employ later.
\begin{claim}\label{claim:Lmass}
   For any $\ell_i \in V(L)$, we have
   \begin{equation}\label{eq:Lmass1}
      c_{\ell_i} \asymp_k |\cC_i|,
   \end{equation}
   and
   for any $D > 0$ and columns $\cC_a \in \cD_i$ and $\cC_b \in \cD_j$ with $|i-j| \leq D$,
   it holds that
   \begin{equation}\label{eq:Lmass2}
       c_{\ell_a}  \asymp_k   |\cC_a| \asymp_{k,D} |\cC_b| \asymp_k c_{\ell_b}.
   \end{equation}
\end{claim}

Thus using \pref{cor:side} (and noting that each $\cD_i$ is non-degenerate) along with \eqref{eq:gammatimesa} gives
\[
   \rho(\cD_i \cup \cD_{i+1}) = \rho(\cD_i \tilecat \cD_{i+1}) \lesssim_k  \rho(\cD_i) + \rho(\cD_{i+1}) \lesssim_k \rho(\bT^h)/\beta_i.
\]
Since it also holds that $|\cD_i|+|\cD_{i+1}| = (\beta_i+\beta_{i+1}) |\bT^h| \leq 2k \beta_i |\bT^h|$, \pref{obs:linear-reff} gives
\[
   \reff^L(\ell_s \leftrightarrow \ell_t) \cdot \left(|\cC_s|+\cdots+|\cC_t|\right) \leq \rho(\cD_i \cup \cD_{i+1}) |\cD_i \cup \cD_{i+1}| \lesssim_k \rho(\bT^h) |\bT^h|,
\]
and again \eqref{eq:gammatimesa} establishes \eqref{eq:reff-times-goal}.
Now \eqref{eq:Lmass1}
completes the proof of the upper bound.

For the lower bound, define $h' \seteq \lfloor \log_b (t-s)\rfloor - 1$ and decompose $\bT^n = \bT^{n-h'} \tileprod \bT^{h'}$.
Partition
$\bT^n$ similarly into $b^{n-h'}$ sets of tiles $\cD_1,\ldots,\cD_{b^{n-h'}}$.
Suppose that $\cC_s \subseteq \cD_i$ and $\cC_t \subseteq \cD_j$, and note that
the width of each $\cD_i$ is $b^{h'}$ and $b^{h'+2} \geq t-s \geq b^{h'+1}$, hence
$j > i+1$.
Therefore using again \pref{obs:linear-reff} and the Nash-Williams inequality, we have
\[
   \reff^L(\ell_s \leftrightarrow \ell_t) \geq \sum_{j=s+1}^{t-1} \frac{1}{c_{\ell_j}} \stackrel{\eqref{eq:Lmass1}}{\gtrsim_k} \sum_{j=s+1}^{t-1} \frac{1}{|\cC_j|}
   \geq \sum_{j=(i-1) b^h+1}^{i b^h} \frac{1}{|\cC_j|}
   = \frac{1}{\beta_{i+1}} \Gamma_{b,k}^n,
\]
where the final inequality uses \eqref{eq:nash-williams}.
Note also that
\[
   |\cC_s|+\cdots+|\cC_t| \geq |\cD_{i+1}| = \beta_{i+1} |\bT^{h'}| = \beta_{i+1} (bk)^{h'}.
\]
An application of \eqref{eq:Lmass1} completes the proof of the lower bound.
\end{proof}

\subsubsection{Rate of escape in $L$}

Consider again the linearized graph $L = L^n_{(b,k)}$ with conductances $c : E(L) \to \R_+$
defined in \pref{sec:linearized}, and let $\{Y_t\}$ be the random walk on $L$ defined by
\begin{equation}\label{eq:ywalk}
   \Pr[Y_{t+1} = v \mid Y_t = u] = \frac{c_{uv}}{c_u}, \quad \{u,v\} \in E(L) \textrm{ or } u=v.
\end{equation}
Let $\pi_L$ be the stationary measure of $\{Y_t\}$.

For a parameter $1 \leq h \leq n$, consider the decomposition $\bT^n = \bT^{n-h} \tileprod \bT^h$, and
let $V_1,V_2,\ldots,V_{b^{n-h}}$ be a partition of $V(L)$ into 
continguous subsets
with $|V_1|=|V_2|=\cdots=|V_b^{n-h}|=b^h$.

Let $\{Z_i : i \in \{1,2,\ldots,b^{n-h}\}\}$ be a collection of independent random variables with
   \[
      \Pr[Z_i = v] = \frac{\pi_L(v)}{\pi_L(V_i)},\quad v \in V_i.
   \]
Define the random time $\tau(h)$ as follows:  Given $Y_0 \in V_j$, let $\tau(h)$ be the first time $\tau \geq 1$ at which
\begin{align*}
   Y_{\tau} \in \{Z_{j-2},Z_{j+2}\} \qquad & 3 \leq j \leq b^{n-h}-2 \\
   Y_{\tau} = Z_{j+2} \qquad & j \in \{1,2\} \\
   Y_{\tau} = Z_{j-2} \qquad & j \in \{b^{n-h}-1,b^{n-h}\}.
\end{align*}
The next lemma shows that the law of the walk stopped at time $\tau(h)$ is
within a constant factor of the stationary measure.

\begin{lemma}
   \label{lem:almost-pi}
   Suppose $Y_0$ is chosen according to $\pi_L$.
   Then for every $v \in V(L)$,
   \[
      \Pr\left[Y_{\tau(h)} = v\right] \asymp_k \pi_L(v).
   \]
\end{lemma}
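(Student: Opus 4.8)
The plan is to reduce the statement to effective-resistance estimates on the linearized graph $L = L^n_{(b,k)}$, using the fact that, apart from its self-loops (which are irrelevant for hitting probabilities), $L$ is the path $\ell_1 - \ell_2 - \cdots - \ell_{b^n}$: in $\bT^n_{(b,k)}$ only consecutive columns are tangent, so $\ell_i \sim \ell_j$ in $L$ forces $|i - j| = 1$. (We assume throughout that $h \le n-1$, so that $b^{n-h} \ge 4$ and the stopping rule defining $\tau(h)$ is well posed.)

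First I would condition on the starting block. Writing $\nu_j(v) \seteq \Pr[Y_{\tau(h)} = v \mid Y_0 \in V_j]$ and using that $Y_0$ conditioned on $\{Y_0 \in V_j\}$ has law $\pi_L(\cdot)/\pi_L(V_j)$, one has $\Pr[Y_{\tau(h)} = v] = \sum_j \pi_L(V_j)\, \nu_j(v)$. Since $Y_{\tau(h)}$ always equals one of the representatives $Z_{j \pm 2}$ of the blocks two away from $V_j$, for $v \in V_m$ only the (at most two) blocks $V_j$ with $|j - m| = 2$ contribute. Because the family $\{Z_i\}$ is independent of the walk and of each other and $Z_m \sim \pi_L(\cdot)/\pi_L(V_m)$, each such term factors as
\[
   \nu_j(v) = \frac{\pi_L(v)}{\pi_L(V_m)} \cdot p_j(v)\mcom
\]
where $p_j(v)$ is the probability that the walk started from $\pi_L|_{V_j}$ reaches the column containing $v$ before reaching the random column $Z_{2j - m}$ on the opposite side of $V_j$ (and $p_j(v) = 1$ in the degenerate case that $j$ lies within distance two of an end of $L$, where $\tau(h)$ names only the single representative, which then lies on the side of $V_m$).

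Next, since $L$ is a path, hitting probabilities obey the one-dimensional birth-death formula (via the series law, \pref{obs:linear-reff}): for columns $q < x < p$ the walk from $\ell_x$ hits $\ell_p$ before $\ell_q$ with probability $\reff^L(\ell_q \leftrightarrow \ell_x)/\reff^L(\ell_q \leftrightarrow \ell_p)$, which is monotone in $x$ (and symmetrically when $p < x < q$). Using monotonicity to replace the column of $Y_0$ by the two endpoint columns of $V_j$, I would bound $p_j(v)$ above and below by ratios of effective resistances $\reff^L(\ell_s \leftrightarrow \ell_t)$ in which every pair $(s,t)$ satisfies $t - s \asymp b^h$ and the interval $[s,t]$ meets only a bounded number of consecutive blocks around $V_j$. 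I would then invoke \pref{lem:reff-times-conductance}, which gives $\reff^L(\ell_s \leftrightarrow \ell_t) \asymp_k (\Gamma_{b,k}\, bk)^{\log_b(t-s)}\big/ \sum_{i = s}^{t} c_{\ell_i}$, together with \pref{claim:Lmass} and \pref{lem:alphaHabn}; the latter bounds the ratio of the column-heights of $\bT^{n-h}_{(b,k)}$ at nearby indices, and combined with \pref{claim:Lmass} this yields $\sum_{i=s}^{t} c_{\ell_i} \asymp_k \beta_j (bk)^h$ for every interval $[s,t]$ as above, where $\beta_j$ denotes the number of tiles in the $j$th column of $\bT^{n-h}_{(b,k)}$. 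Since also $(\Gamma_{b,k}\, bk)^{\log_b(t-s)} \asymp_k (\Gamma_{b,k}\, bk)^h$ whenever $t - s \asymp b^h$, every effective resistance entering these bounds is $\asymp_k (\Gamma_{b,k}\, bk)^h \big/ (\beta_j (bk)^h)$, so $p_j(v) \asymp_k 1$ (uniformly over the realization of $Z_{2j-m}$, since its column lies in a fixed nearby block).

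Finally, combining this with $\pi_L(V_j) \asymp_k \pi_L(V_m)$ for $|j - m| = 2$ (again from the comparability of the $\beta$'s) gives
\[
   \Pr[Y_{\tau(h)} = v] = \sum_{j :\, |j - m| = 2} \pi_L(V_j)\, \nu_j(v) \asymp_k \frac{\pi_L(v)}{\pi_L(V_m)} \cdot \pi_L(V_m) = \pi_L(v)\mper
\]
The sum has one or two terms, including when $V_m$ itself sits near an end of $L$, and each term is $\asymp_k \pi_L(v)$, so the conclusion follows. The delicate point is the middle step: one must check that the several effective resistances that appear — within $V_j$, between $V_j$ and an adjacent block, and between blocks up to four apart — are all comparable up to $\asymp_k$ factors; this is exactly where the polynomial form of \pref{lem:reff-times-conductance} and the near-constancy of the masses $\sum_i c_{\ell_i}$ over neighboring blocks are used, and it is the only place the specific combinatorics of $\bT_{(b,k)}$ enter the argument.
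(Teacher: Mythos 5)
Your proposal is correct and follows essentially the same route as the paper: condition on the block containing $Y_0$, use independence of the $Z_i$ to factor out $\pi_L(v)/\pi_L(V_m)$, express the remaining hitting probabilities via the series law as ratios of effective resistances, and conclude they are $\asymp_k 1$ from \pref{lem:reff-times-conductance} together with \pref{claim:Lmass}, plus comparability of the block masses. The only differences are cosmetic — your extra monotonicity step for replacing the starting column by block endpoints is harmless but unnecessary, and you spell out the boundary blocks that the paper dismisses as ``similar.''
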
 

\begin{proof}
   Consider some $5 \leq j \leq b^{n-h}-4$ and $v \in V_j$.
The proof for the other cases is similar.  Let $\cE$ denote the event $\{Y_0 \in \{V_{j-2},V_{j+2}\}\}$.
   The conditional measure is
   \[
      \Pr[Y_0=u \mid \cE] = \frac{\pi_L(u)}{\pi_L(V_{j-2})+\pi_L(V_{j+2})},\quad u \in V_{j-2} \cup V_{j+2}.
   \]
   Consider three linearly ordered vertices $v,u,w \in V(L)$, i.e., such that $v,w$ are in distinct
   connected components of $L[V(L) \setminus \{u\}]$).
   Let $p_{u}^{v \prec w}$ denote the probability that the random walk,
   started from $Y_0 = u$ hits $v$ before it hits $w$.
   Now we have:
   \begin{equation}
      \Pr\left[Y_{\tau(h)} = v\right] 
                                      = \frac{\pi_L(v)}{\pi_L(V_j)} \left(
         \sum_{u \in V_{j-2}} \pi_L(u) \sum_{w \in V_{j-4}} \frac{\pi_L(w)}{\pi_L(V_{j-4})} p_u^{v \prec w} 
         +
      \sum_{u \in V_{j+2}} \pi_L(u) \sum_{w \in V_{j+4}} \frac{\pi_L(w)}{\pi_L(V_{j+4})} p_u^{v \prec w}\right) \label{eq:conprob}
   \end{equation}
   It is a classical fact (see \cite[Ch. 2]{lp:book}) that
   \[
      p_u^{v \prec w} = \frac{\reff^L(u \lra w)}{\reff^L(u \lra v) + \reff^L(u \lra w)} =
      \frac{\reff^L(u\lra w)}{\reff^L(v \lra w)},
   \]
   where the final equality uses \pref{obs:linear-reff} and the fact that $v,u,w$ are linearly ordered.

   Thus from 
   \pref{lem:reff-times-conductance}
   and \eqref{eq:Lmass2},
   whenever $w \in V_{j-4}, u \in V_{j-2}, v \in V_{j}$ or $u \in V_{j+2}, v \in V_j, w \in V_{j+4}$, it holds that
   \[
      p_{u}^{v \prec w} \asymp_k 1 
   \]
   Another application of \eqref{eq:Lmass2} gives
   \[
      \pi_L(V_{j-2})\asymp_k \pi_L(V_{j-4}) \asymp_k \pi_L(V_j) \asymp_k \pi_L(V_{j+2}),
   \]
   hence \eqref{eq:conprob} gives
   \[
      \Pr\left[Y_{\tau(h)} = v\right] \asymp_k \pi_L(v),
   \]
   completing the proof.
\end{proof}

\begin{lemma}\label{lem:hitting-time}
   It holds that $\E[\tau(h) \mid Y_0] \lesssim_k b^{h\,\mathsf{d}_w(b,k)}$.
\end{lemma}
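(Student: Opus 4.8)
The plan is to recognize that $\{Y_t\}$ is a lazy nearest-neighbour walk on a weighted path, reduce $\tau(h)$ to a hitting time on a bounded induced subgraph, and then control that hitting time with the commute-time identity together with \pref{lem:reff-times-conductance}.

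First I would record that $L = L^n_{(b,k)}$ is, up to its self-loops, a weighted path: two columns of $\bT^n_{(b,k)}$ share an edge only if they are consecutive, so $\ell_i \sim \ell_{i'}$ in $L$ forces $|i-i'|=1$, and hence $\reff^L(\ell_i \leftrightarrow \ell_{i+1}) = 1/c_{\ell_i\ell_{i+1}}$. Fix the block $V_j \ni Y_0$ and a realization of the auxiliary variables $\{Z_i\}$, and write $s$ and $t$ for the indices of $Z_{j-2}$ and $Z_{j+2}$; since $Z_{j-2}\in V_{j-2}$ and $Z_{j+2}\in V_{j+2}$ we always have $3b^h \leq t-s \leq 5b^h$, and the index of $Y_0$ lies strictly between $s$ and $t$. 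By definition $\tau(h)$ is the hitting time of $\{\ell_s,\ell_t\}$ for the walk on $L$, and since the walk is nearest-neighbour it cannot leave the interval of indices $(s,t)$ before hitting $\{\ell_s,\ell_t\}$, its transitions inside that interval depending only on conductances incident to columns in $[s,t]$. Hence, letting $L''$ denote the subgraph of $L$ induced on $\{\ell_s,\ell_{s+1},\ldots,\ell_t\}$, the law of the walk up to that hitting time is unchanged, so
\[
   \E\big[\tau(h) \mid Y_0, \{Z_i\}\big] = \E^{L''}_{Y_0}\big[T_{\{\ell_s,\ell_t\}}\big] \leq \E^{L''}_{Y_0}\big[T_{\ell_t}\big].
\]

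Next I would apply the commute-time identity on the finite graph $L''$ (see \cite[Ch.~2]{lp:book}): $\E^{L''}_{Y_0}[T_{\ell_t}] \leq \E^{L''}_{Y_0}[T_{\ell_t}] + \E^{L''}_{\ell_t}[T_{Y_0}] = c_{\mathrm{tot}}(L'')\cdot \reff^{L''}(Y_0 \leftrightarrow \ell_t)$, where $c_{\mathrm{tot}}(L'') = \sum_v c_v(L'')$. Two elementary observations finish the reduction. Since $L''$ is a sub-path and the index of $Y_0$ lies between $s$ and $t$, the series law (\pref{obs:linear-reff}) gives $\reff^{L''}(Y_0 \leftrightarrow \ell_t) \leq \reff^{L''}(\ell_s \leftrightarrow \ell_t) = \reff^{L}(\ell_s \leftrightarrow \ell_t)$; and each internal vertex $\ell_i$ ($s<i<t$) has the same incident conductances in $L''$ as in $L$, while $c_{\ell_s}(L'') \leq c_{\ell_s}(L)$ and $c_{\ell_t}(L'') \leq c_{\ell_t}(L)$, so $c_{\mathrm{tot}}(L'') \leq c_{\ell_s} + c_{\ell_{s+1}} + \cdots + c_{\ell_t}$ (with $c_{\ell_i}$ the conductances in $L$). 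Combining,
\[
   \E\big[\tau(h) \mid Y_0, \{Z_i\}\big] \leq \reff^{L}(\ell_s \leftrightarrow \ell_t)\cdot\big(c_{\ell_s} + \cdots + c_{\ell_t}\big) \asymp_k (\Gamma_{b,k}\, bk)^{\log_b(t-s)}
\]
by \pref{lem:reff-times-conductance}. Since $3b^h \leq t-s \leq 5b^h$ and $1 \leq \Gamma_{b,k}\, bk \leq k^2$, the right-hand side is $\lesssim_k (\Gamma_{b,k}\,bk)^{h}$, and $(\Gamma_{b,k}\,bk)^h = b^{h(\log_b(bk) + \log_b \Gamma_{b,k})} = b^{h\,\mathsf{d}_w(b,k)}$. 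As this bound is uniform over the realization of $\{Z_i\}$ and over $Y_0$, averaging over $\{Z_i\}$ yields the lemma. The boundary blocks $j \in \{1,2\}$ and $j \in \{b^{n-h}-1, b^{n-h}\}$, where $\tau(h)$ has a single target, are handled identically by replacing $L''$ with the subgraph of $L$ induced on $\{\ell_1,\ldots,\ell_t\}$ (resp.\ $\{\ell_s,\ldots,\ell_{b^n}\}$), the relevant index gap still being $\asymp b^h$.

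The only genuinely nonelementary ingredient is the estimate $\reff^L(\ell_s\leftrightarrow\ell_t)(c_{\ell_s}+\cdots+c_{\ell_t}) \asymp_k (\Gamma_{b,k}bk)^{\log_b(t-s)}$, which is precisely \pref{lem:reff-times-conductance} and thus already available; everything else is soft, namely the path structure of $L$, the series law, and the commute-time identity. The two points needing a little care are that $s$ and $t$ depend on the random $\{Z_i\}$ (harmless, since $t-s \asymp b^h$ for every realization, so the bound always has the same form) and the boundary cases described above.
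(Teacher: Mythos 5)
Your proof is correct and uses essentially the same approach as the paper: bound the hitting time by (total conductance of the relevant block of columns) times (effective resistance across it), and then invoke \pref{lem:reff-times-conductance}. The only technical difference is cosmetic: the paper cites the standard hitting-time bound from \cite{CRRST96} directly, keeping a $\min$ over the two targets, whereas you restrict to the induced sub-path $L''$ (valid, since the walk on a path cannot escape $(s,t)$ before hitting $\{\ell_s,\ell_t\}$, and the transition probabilities at internal vertices are unaffected), discard the $\min$ via $T_{\{\ell_s,\ell_t\}} \leq T_{\ell_t}$, and apply the ordinary commute-time identity; this costs at most a factor $\lesssim_k 1$ and is equally rigorous.
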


\begin{proof}
   Consider a triple of vertices $u \in V_{j}, v \in V_{j-2}, w\in V_{j+2}$ for $3 \leq j \leq b^{n-h} - 2$. 
   Let $\tau_{v,w}$ be the smallest time $\tau \geq 0$ such that $X_\tau \in \{v,w\}$, and denote
   \[
      t_{u}^{v,w} \seteq \E[\tau_{v,w} \mid Y_0 = u].
   \]
   Then the standard connection between hitting times and effective resistances \cite{CRRST96}
   yields 
   \begin{align*}
      t_u^{v,w} \leq 2 \left(\sum_{i=j-2}^{j+2} \sum_{x \in V_i} c_x\right) \min\left(\reff^L(u \leftrightarrow v), \reff^L(u \leftrightarrow w)\right)
                \lesssim_k \left(\Gamma_{b,k} bk\right)^{h},
   \end{align*}
   where the last line employs \pref{lem:reff-times-conductance}.
   Recalling that $\mathsf{d}_w(b,k) = \log_b(bk \Gamma_{b,k})$, this yields
   \[
      \E[\tau(h) \mid Y_0 = v] \lesssim_k b^{h\,\mathsf{d}_w(b,k)}
   \]
   for any $v \in V_3 \cup V_4 \cup \cdots \cup V_{b^{n-h}-2}$.
   A one-sided variant of the argument follows in the same manner for $v \in V_j$ when $j \leq 2$ or $j \geq b^{n-h} - 1$.
\end{proof}

\begin{lemma}
   \label{lem:Lspeed}
   Let $Y_0$ have law $\pi_L$.
   There is a number $c_k > 0$ such that
   for any $T \leq c_k \diam(L)^{\mathsf{d}_w(b,k)}$, we have
   \[
      \E\left[d_L(Y_0,Y_T)\right] \gtrsim_k T^{1/\mathsf{d}_w(b,k)}.
   \]
\end{lemma}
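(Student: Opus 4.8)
The plan is to run a single-scale regeneration argument on the walk $\{Y_t\}$, which on the path-like graph $L$ is a birth–death chain. Fix the scale by letting $r = b^h$, where $h$ is the largest integer with $b^{h\,\mathsf{d}_w(b,k)} \le \kappa_k T$ for a small constant $\kappa_k$ to be calibrated; since $\mathsf{d}_w(b,k) = \log_b(bk\,\Gamma_{b,k})$ this forces $r \asymp_k T^{1/\mathsf{d}_w(b,k)}$, and the hypothesis $T \le c_k\,\diam(L)^{\mathsf{d}_w(b,k)}$ (with $c_k$ small) makes $h \le n$ with room to spare — which in turn makes $Y_0 \sim \pi_L$ lie in the interior part $V_3 \cup \dots \cup V_{b^{n-h}-2}$ with probability $\ge 1 - o_k(1)$, because by \eqref{eq:Lmass1} together with $\sum_i 1/|K_i| = \Gamma_{b,k}^n$ from \eqref{eq:nash-williams}, the two extremal level-$h$ blocks carry only a $k^{-\Theta_k(1)}$ fraction of the total mass. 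For this $h$, the regeneration time $\tau(h)$ satisfies $\E[\tau(h)\mid Y_0] \asymp_k b^{h\,\mathsf{d}_w(b,k)} \asymp_k \kappa_k T$: the upper bound is exactly \pref{lem:hitting-time}, and the matching lower bound follows by writing $\tau(h)$ as the exit time of an $\asymp r$-column interval and evaluating it through the birth–death Green's-function identity $\E_u[\tau_{\{a,b\}}] = \sum_y G_{(a,b)}(u,y)$ with $G_{(a,b)}(u,u) = c_{\ell_u}\,\reff^{(a,b)}(u\lra\{a,b\})$, which \pref{lem:reff-times-conductance} and \eqref{eq:Lmass2} pin down up to a factor $\asymp_k 1$.

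Now build the regeneration sequence $\tau_0 = 0 < \tau_1 < \dots$ by re-applying the rule defining $\tau(h)$ afresh from each $\tau_i$, and choose $\kappa_k$ so that $\E[\tau(h)\mid Y_0] \le T/(2M)$ for a further constant $M = M(k)$. A routine accounting (Markov's inequality, plus the deterministic bound $\tau(h) \ge b^h$) shows that with probability $\ge 1/2$ the number $N$ of regenerations completed by time $T$ is at least $M$ and not pathologically large. Three ingredients then finish the lower bound. (i) \emph{Forgetfulness and spreading.} By \pref{lem:almost-pi} a regeneration forgets everything but the scale, so across each regeneration the walk's column moves by an amount $\asymp r$ with a left/right choice governed by resistance ratios $p_u^{v\prec w} = \reff^L(u\lra w)/\reff^L(v\lra w)$ that are $\asymp_k 1$ by \pref{lem:reff-times-conductance} and \eqref{eq:Lmass2}; crucially $i \mapsto R(Y_{\tau_i})$, with $R(\ell_j) \seteq \reff^L(\ell_1 \lra \ell_j)$, is an exact martingale (optional stopping, using the series law \pref{obs:linear-reff}), so it has no drift and over $\Theta_k(1)$ steps its increments — each with conditional second moment $\asymp_k r^2$ after translating back to columns via \pref{lem:reff-times-conductance} and \pref{claim:Lmass} on the $\asymp_k$-homogeneous scale-$r$ window — accumulate, giving $\E[d_L(Y_0, Y_{\tau_N})^2] \gtrsim_k M\,r^2$; a Paley–Zygmund bound (legitimate since $M = M(k)$ is constant) then yields $d_L(Y_0, Y_{\tau_N}) \gtrsim_k \sqrt M\,r$ with probability $\gtrsim_k 1$. (ii) \emph{Excursion control.} Reaching column-distance $Cr$ inside a single regeneration interval costs expected time $\gtrsim_k C^{\mathsf{d}_w(b,k)} b^{h\,\mathsf{d}_w(b,k)}$, while the interval has expected length $\asymp_k b^{h\,\mathsf{d}_w(b,k)}$, so the excursion tail is $\lesssim_k C^{-\mathsf{d}_w(b,k)}$, which is summable since $\mathsf{d}_w(b,k) > 1$; hence $\E[\sup_{\tau_N \le t \le T} d_L(Y_{\tau_N}, Y_t)] \lesssim_k r$. (iii) \emph{Calibration.} Choosing $M$ large enough (depending only on $k$) that $\sqrt M$ dominates the $O_k(1)$ excursion bound, $d_L(Y_0, Y_T) \ge d_L(Y_0, Y_{\tau_N}) - \sup_{\tau_N \le t \le T} d_L(Y_{\tau_N},Y_t) \gtrsim_k r$ on an event of probability $\gtrsim_k 1$, so $\E[d_L(Y_0,Y_T)] \gtrsim_k r \asymp_k T^{1/\mathsf{d}_w(b,k)}$.

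The step I expect to be the main obstacle is passing from the random regeneration times to the fixed time $T$ — that is, ingredients (ii) and the homogeneity used in (i) — because it is precisely here that the wildly non-uniform conductance profile of $L$ (column heights of $\bT^n_{(b,k)}$ range over $[b^n,\Theta_k(k^n)]$) resists a single-scale treatment: the excursion estimate needs ``time to cross $Cr$ columns $\asymp_k (Cr)^{\mathsf{d}_w(b,k)}$'' uniformly in where the interval sits, and the second-moment estimate needs the scale-$r$ window around the $\pi_L$-random starting point to be mass-homogeneous up to a $k$-dependent constant. What rescues the argument is that this irregularity is itself self-similar and $\asymp_k$-stable under bounded shifts of level-$h$ blocks (\pref{claim:Lmass}), so that \pref{lem:reff-times-conductance} and \eqref{eq:nash-williams} supply exactly the volume-and-resistance input needed at scale $r$; I expect most of the work to lie in the bookkeeping of these $k$-dependent constants across the $\Theta_k(1)$ regenerations, which is where $M=M(k)$ and $\kappa_k$ are ultimately fixed.
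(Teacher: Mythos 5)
Your approach is genuinely different from the paper's, and unfortunately contains real gaps that your own plan does not resolve. The paper's proof sidesteps nearly all of the machinery you invoke. Its key tool is a reversibility trick (equation \eqref{eq:anytime} in the paper): since $\{Y_t\}$ is stationary and reversible, $\E[d_L(Y_0,Y_T)] \geq \tfrac12 \max_{0\leq t\leq T}\E[d_L(Y_0,Y_t)-d_L(Y_0,Y_1)]$, obtained by splicing a walk with an independent copy at time $s/2$. The paper then uses a \emph{single} application of $\tau(h)$ and a bootstrap: writing $\eta \seteq b^{-h}\max_{0\leq t\leq 2T}\E[d_L(Y_0,Y_t)]$, Lemma~\ref{lem:almost-pi} gives $\E[d_L(Y_0,Y_{2T})]\geq \tfrac12(1-C(k)\eta)b^h$. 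If $\eta$ is small, this is a direct bound; if $\eta$ is large, \eqref{eq:anytime} applies directly. No regeneration sequence, no second-moment/anti-concentration step, and no excursion control is needed.

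Now to the gaps in your plan. (a) Your excursion tail estimate in (ii) is wrong: ``expected time to reach column-distance $Cr$ is $\gtrsim C^{\mathsf{d}_w}r^{\mathsf{d}_w}$'' does not translate by Markov to a tail bound of $C^{-\mathsf{d}_w}$ for the event of reaching distance $Cr$ before regeneration. The correct exit-probability scaling, via optional stopping on the $R$-martingale, is a gambler's-ruin tail $\lesssim_k C^{-1}$, which is \emph{not} summable. (The statement is still salvageable via Doob's $L^2$ maximal inequality applied to the $R$-martingale, but that is a different argument than what you wrote.) (b) The Paley--Zygmund step needs a matching fourth-moment bound on $d_L(Y_0,Y_{\tau_M})$ (or on the $R$-martingale), which you do not address; the second moment alone gives nothing one-sided. (c) You freely interchange the random count $N$ of completed regenerations with a fixed $M$; the second-moment and anti-concentration estimates must be applied at the deterministic index $\tau_M$ and combined carefully with $\Pr[N\geq M]\geq 1/2$, since the conditioning interacts with the martingale increments. (d) You assert $\E[\tau(h)\mid Y_0]\gtrsim_k b^{h\,\mathsf{d}_w(b,k)}$ via the Green's-function identity, but this lower bound is not in the paper and is not a free consequence of Lemma~\ref{lem:reff-times-conductance}; it requires summing $\sum_y G(u,y)$ with uniform-in-$y$ control across a highly inhomogeneous conductance profile, and nothing in your sketch pins this down. (e) The $R$-martingale is only a martingale away from $\{\ell_1,\ell_{b^n}\}$; you need the walk to avoid the boundary for the full $M$ regenerations, not just at time $0$, which you do not verify. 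In short, you correctly identified the difficult interface between the random regeneration times and the fixed horizon $T$, but your proposed fixes (ii) and the $\pi_L$-homogeneity in (i) do not close it; the paper closes it by avoiding the multiscale regeneration accounting entirely.
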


\begin{proof}
   First, we claim that for every $T \geq 1$,
   \begin{equation}\label{eq:anytime}
      \E\left[d_L(Y_0,Y_T)\right] \geq \frac12 \max_{0 \leq t \leq T} \E\left[d_L(Y_0,Y_t) - d_L(Y_0,Y_1)\right].
   \end{equation}
   Let $s' \leq T$ be such that
   \[
      \E[d_L(Y_0,Y_{s'})] = \max_{0 \leq t \leq T} \E[d_L(Y_0,Y_t)].
   \]
   Then there exists an even time $s \in \{s',s'-1\}$ such that $\E[d_L(Y_0,Y_s)] \geq \E[d_L(Y_0,Y_{s'})-d_L(Y_0,Y_1)]$.
   Consider $\{Y_t\}$ and an identically distributed walk $\{\tilde{Y}_t\}$ such that
   $\tilde{Y}_t=Y_t$ for $t \leq s/2$ and $\tilde{Y}_t$ evolves independently after time $s/2$.
   By the triangle inequality, we have
   \[
      d_L(Y_0,\tilde{Y}_T) + d_L(\tilde{Y}_T,Y_s) \geq d_L(Y_0, Y_s).
   \]
   But since $\{Y_t\}$ is stationary and reversible, $(Y_0,\tilde{Y}_T)$ and $(\tilde{Y}_T, Y_s)$ have the same law
   as $(Y_0,Y_T)$.  Taking expectations yields \eqref{eq:anytime}.

   Let $h \in \{1,2,\ldots,n\}$ be the largest value such that $\E[\tau(h)] \leq T$.
   We may assume that $T$ is sufficiently large so that $\E[\tau(1)] \leq T$, and
   \pref{lem:hitting-time} guarantees that 
   \begin{equation}\label{eq:dist-large}
      b^h \gtrsim_k T^{1/\mathsf{d}_w(b,k)},
   \end{equation}
   as long as $h < n$ (which gives our restriction $T \leq c_k b^{h \mathsf{d}_w(b,k)} = c_k \diam(L)^{1/\mathsf{d}_w(b,k)}$ for some
   $c_k > 0$).

   From the definition of $\tau(h)$, we have
   \[
      d_L(Y_0, Y_{\tau(h)}) \geq b^h,
   \]
   hence the triangle inequality implies
   \begin{equation}\label{eq:lb1}
      d_L(Y_0,Y_{2T}) \geq \1_{\{\tau(h) \leq 2T\}} \left(b^h - d_L(Y_{\tau(h)}, Y_{2T})\right).
   \end{equation}

   Again, let $\{\tilde{Y}_t\}$ be an independent copy of $\{Y_t\}$.
   Then since $\Pr(\tau(h) \leq 2T) \geq 1/2$, \pref{lem:almost-pi} implies
   \begin{equation*}\label{eq:y-compare}
      \Pr\left[Y_{\tau(h)}=v \mid \{\tau(h) \leq 2T\}\right] \leq 2 \Pr\left[Y_{\tau(h)}=v\right] \lesssim_k  \Pr\left[\tilde{Y}_0=v\right].
   \end{equation*}
   Therefore,
   \begin{align*}
      \E\left[\1_{\{\tau(h) \leq 2T\}} \,d_L(Y_{\tau(h)}, Y_{2T})\right] 
         &\lesssim_k \E\left[\1_{\{\tau(h) \leq 2T\}} \,d_L(\tilde{Y}_{0},\tilde{Y}_{2T-\tau(h)})\right].
   \end{align*} 
   Define $\eta \seteq b^{-h} \max_{0 \leq t \leq 2T} \E[d_L(Y_0,Y_t)]$. Using the above bound yields 
   \[
      \E\left[\1_{\{\tau(h) \leq 2T\}} \,d_L(Y_{\tau(h)}, Y_{2T})\right] \leq C(k) \Pr(\tau(h) \leq 2T)\,\eta b^h,
   \] 
   for some number $C(k)$.
   Taking expectations in \eqref{eq:lb1} gives
   \[
      \E\left[d_L(Y_0,Y_{2T})\right] \geq \Pr(\tau(h) \leq 2T) \left(1-\eta C(k)\vphantom{\bigoplus}\right) b^h
      \geq \frac12 \left(1-\eta C(k)\vphantom{\bigoplus}\right) b^h.
   \]
   If $\eta \leq 1/(2 C(k))$, then $\E[d_L(Y_0,Y_{2T})] \geq b^h/4$.
   If, on the other hand, $\eta > 1/(2 C(k))$, then
   \pref{eq:anytime} yields
   \[
      \E[d_L(Y_0,Y_{2T})] \geq \frac12 \left(\eta b^h - 1\right) \gtrsim_k b^h.
   \]
   Now \eqref{eq:dist-large} completes the proof.
\end{proof}

\subsubsection{Rate of escape in $G_{b,k}$}
\label{sec:hit-gbk}

\begin{figure}
   \centering
   \includegraphics[width=4cm]{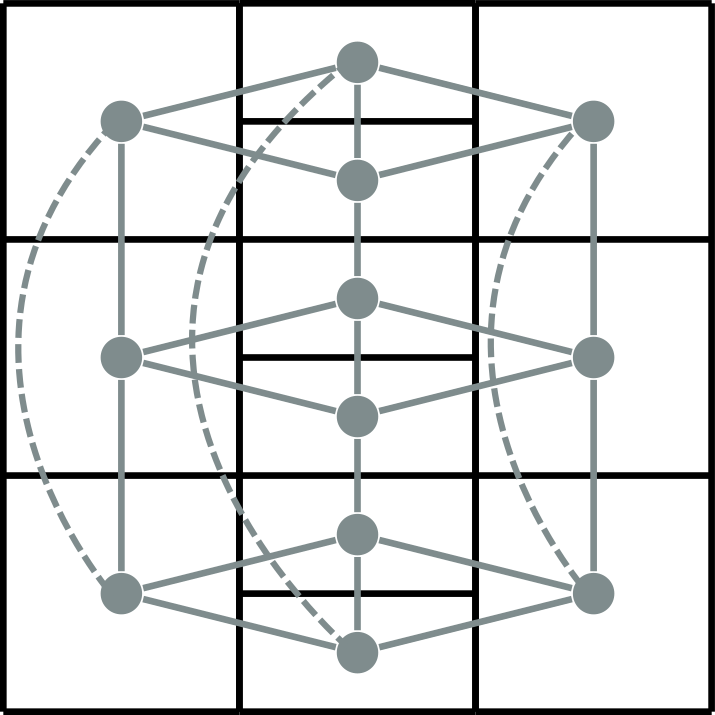}
   \caption{The cylindrical graph $\tilde{G}$ for $G=G(\bT_{\langle 3,6,3\rangle})$.  The new edges are dashed.\label{fig:torus}}
\end{figure}

Consider now the graphs $G_n \seteq G(\bT^n_{(b,k)})$ for some $k \geq b \geq 4$ and $n \geq 1$.
Let us define the cylindrical version $\tilde{G}_n$ of $G_n$
with the same vertex set, but additionally and edge from
the top tile to the bottom tile in every column (see \pref{fig:torus}).
If we choose $\tilde{\rho}_n \in V(\tilde{G}_n)$ according to the
stationary measure on $\tilde{G}_n$, then clearly $\{(\tilde{G}_n,\tilde{\rho}_n)\} \todl (G_{b,k},\rho)$ as well.

Define also $L_n \seteq L^n_{b,k}$.
Because of \pref{obs:column}(b), the graph $\tilde{G}_n$ has vertical symmetry:
Tiles within a column all have the same degree and, more specifically,
have the same number of neighbors on the left and on the right. 
Let $\pi_n : V(G_n) \to V(L_n)$ denote the projection map and observe that
\[
   d_{\tilde{G}_n}(u,v) \geq d_{L_n}\!\left(\pi_n(u),\pi_n(v)\right), \qquad \forall u,v \in V(G_n).
\]
Let $\{X^{(n)}_t\}$ denote the random walk on $\tilde{G}_n$ with $X^{(n)}_0=\tilde{\rho}_n$, and let $\{Y^{(n)}_t\}$ be the 
stationary random walk
on $L_n$ defined in \eqref{eq:ywalk}.

Note that, by construction, $\{\pi_n(X_t^{(n)})\}$ and $\{Y_t^{(n)}\}$ have the same law, and
therefore
\begin{equation}\label{eq:lb-proj}
   \E\left[d_{\tilde{G}_n}\left(X^{(n)}_0,X^{(n)}_T\right)\right] \geq \E\left[d_{L_n}\left(Y^{(n)}_0,Y^{(n)}_T\right)\right].
\end{equation}

With this in hand, we can establish speed lower bounds in the limit $(G_{b,k},\rho)$.

\begin{proof}[Proof of \pref{thm:Gbk-speed}]
   Observe that \eqref{eq:lb-proj} in conjunction with \pref{lem:Lspeed} gives, for
   every $T \leq c_k (\diam(\tilde{G}_n)/2)^{\mathsf{d}_w(b,k)}$,
   \[
      \E\left[d_{\tilde{G}_n}\!\left(X^{(n)}_0,X^{(n)}_T\right)\right] \gtrsim_k T^{1/\mathsf{d}_w(b,k)}
   \]
   Since $\{(\tilde{G}_n,\rho_n)\} \todl (G,\rho)$ by \pref{lem:gbk}, it holds that
   if $\{X_t\}$ is the random walk on $G$ with $X_0=\rho$, then for all $T \geq 1$,
   \[
      \E\left[d_{G}(X_0,X_T)\right] \gtrsim_k T^{1/\mathsf{d}_w(b,k)}.\qedhere
   \]
\end{proof}

\subsection{Annular resistances}
\label{sec:anular-resist}

We will establish \pref{thm:intro-resist} by proving the following.

\begin{theorem}\label{thm:annular-gbk}
   For any $k \geq b \geq 4$, there is a constant $C=C(k)$ such that
   for $G=G_{b,k}$, almost surely
   \[
      \reff^{G}\left(B_G(\rho, R) \leftrightarrow V(G) \setminus B_G(\rho, 2 R)\right) \leq C R^{\log_b(\Gamma_{b,k})},\quad
      \forall R \geq 1.
   \]
\end{theorem}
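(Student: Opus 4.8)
The plan is to reduce \pref{thm:annular-gbk} to a uniform effective‑resistance estimate on the finite graphs $G(\bT^n_{(b,k)})$ and then construct an explicit ``outward'' flow there using the tiling structure.

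\emph{Reduction to finite graphs.} Using the coupling in the proof of \pref{lem:gbk}, I realize $(G_{b,k},\rho)$ as an increasing union $\bigcup_{j\ge 0}\Lambda_j$, where $\Lambda_j$ is an induced subgraph isomorphic to $G(\bT^j_{(b,k)})$, the isomorphisms are compatible, and $\rho$ corresponds to the distinguished roots. The tail bound $1-\Pr[\cE_{r,j}]\lesssim_k k^{-j} r^{\mathsf d}$ obtained in that proof is summable in $j$, so Borel--Cantelli gives: almost surely, for every integer $r\ge 1$ there is $j_0$ with $d_{\Lambda_j}(\rho,\partial\Lambda_j)>r$ for all $j\ge j_0$ (here $\partial\Lambda_j$ denotes the set of vertices of $\Lambda_j$ adjacent to $V(G_{b,k})\setminus\Lambda_j$). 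A short geodesic argument shows that for such $j$ one has $B_{G_{b,k}}(\rho,s)=B_{\Lambda_j}(\rho,s)$ for all $s\le r$ and $\Lambda_j\setminus B_{\Lambda_j}(\rho,s)\subseteq V(G_{b,k})\setminus B_{G_{b,k}}(\rho,s)$. Combining this with Rayleigh monotonicity ($G_{b,k}\supseteq\Lambda_j$) reduces the theorem to the deterministic statement: for all $j\ge 0$, all $x\in V(G(\bT^j_{(b,k)}))$, and all $R\ge 1$ with $d_{G(\bT^j_{(b,k)})}(x,\partial\bT^j_{(b,k)})>2R$,
\[
   \reff^{G(\bT^j_{(b,k)})}\!\left(B(x,R)\leftrightarrow V\setminus B(x,2R)\right)\lesssim_k R^{\log_b\Gamma_{b,k}}.
\]
(It suffices to treat integer $R$; general $R$ follows by rounding at the cost of bounded factors.)

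\emph{The flow.} Small $R$ is trivial (bound $\reff$ by the length of a shortest path), so assume $R$ large. Choose $m$ with $b^m\le R/16<b^{m+1}$; interiority forces $m<j$. Write $\bT^j_{(b,k)}=\bT^{j-m}\tileprod\bT^m$ and let $\bC$ be the copy of $\bT^m$ containing the tile $x$. Then $\bC$ occupies a contiguous block of exactly $b^m$ columns of $\bT^j$, and $\diam_{G(\bT^j)}(\bC)\le 3b^m<R$ by \pref{lem:HabnDiam}, so $\bC\subseteq B(x,R)$; take the source to be the uniform distribution on $\bC$. There is a constant $D=D(b)$ such that, following a horizontal path of $D$ super‑tiles in $\bT^{j-m}$ starting from the super‑tile of $\bC$ (in whichever of the two directions has room; if $\bT^{j-m}$ itself has fewer than $\sim D$ super‑columns, instead run all the way out to $\partial\bT^j$), the terminal copy of $\bT^m$ lies in columns of $\bT^j$ at distance $>2R$ from $x$. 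This uses \pref{obs:column}(a), \pref{lem:HabnDiam}, and the crude bound $d_{G(\bT^j)}(u,v)\ge|\mathrm{col}(u)-\mathrm{col}(v)|$ (in the second case one also invokes interiority to see that $\partial\bT^j$ is at distance $>2R$ from $x$). The sink is the uniform distribution on that terminal copy (or on a boundary column), which therefore lies in $V\setminus B(x,2R)$. The unit flow sends the mass from $\bC$ to its right (or left) boundary and then across the $O_b(1)$ subsequent copies of $\bT^m$ to the terminal one.

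\emph{Energy bound and the main obstacle.} By \pref{lem:eff-resist-general}, $\rho(\bT^m)\asymp_k\Gamma_{b,k}^m$. Crossing $\bC$ from the uniform distribution to a side boundary costs $\lesssim_k\Gamma_{b,k}^m$, crossing each later copy costs $\lesssim_k\Gamma_{b,k}^m$, and the $O_b(1)$ transitions across super‑column boundaries cost $\lesssim_k b^{-m}$ in total (using the parallel law across stacked copies together with \pref{lem:side_product_reff} and \pref{cor:side}). Hence the energy is $\lesssim_k\Gamma_{b,k}^m+b^{-m}$. Since $\Gamma_{b,k}\ge 1/\gamma_1^{(b,k)}+1/\gamma_b^{(b,k)}=2/b$ we have $\log_b\Gamma_{b,k}>-1$, so $b^{-m}\lesssim_b R^{-1}\lesssim R^{\log_b\Gamma_{b,k}}$; and $\Gamma_{b,k}^m\asymp_b R^{\log_b\Gamma_{b,k}}$ because $b^m\asymp_b R$. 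This yields the displayed finite‑graph bound, and with it \pref{thm:annular-gbk}. The main obstacle is the middle claim about crossing a copy of $\bT^m$: because consecutive columns of $\bT^{j-m}$ can differ in height by a factor up to $k$ (two‑apart by $k^2$, etc., via \pref{lem:alphaHabn}), the flow in general enters the next copy of $\bT^m$ through only a height‑$\delta$ sub‑slab of its boundary, and one must show that crossing $\bT^m$ through such a slab still costs $\lesssim_k\Gamma_{b,k}^m/\delta$ with $\delta$ bounded below by a constant $k^{-O_b(1)}$ (so the $O_b(1)$ steps do not let $\delta$ deteriorate). This is a ``centered'' analogue of the resistance computations behind \pref{lem:eff-resist-general} and \pref{thm:eff-resist-general} — the flow must spread both leftward and rightward from $x$ rather than starting at a full boundary — and is proved by the same recursive decomposition $\bT^m=\bT_{\gamma}\tileprod\bT^{m-1}$, the parallel law across stacked copies, \pref{obs:column}(b) (divisibility of consecutive column heights), and \pref{cor:side} to absorb the $O_k(1)$ boundary‑mismatch factors.
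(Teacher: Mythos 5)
Your high-level strategy matches the paper's: reduce via the coupling of \pref{lem:gbk} and Rayleigh monotonicity to an annular resistance bound on the finite graphs $G(\bT^n_{(b,k)})$, then decompose $\bT^n = \bT^{n-m}\tileprod\bT^m$ at scale $b^m\asymp R$ and send a unit flow horizontally across $O_k(1)$ super-columns. Your reduction step is fine, and your identification of the central difficulty is accurate. But what you flag as ``the main obstacle'' is a genuine gap, and the sketch you offer to resolve it does not go through as stated.

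The problem is precisely the one you half-identify: when the flow passes from a copy of $\bT^m$ in super-column $s$ into the adjacent copy $\bC'$ in super-column $s+1$, it enters through a sub-slab whose height is a $\beta_{s+1}/\beta_s$-fraction of $\bC'$. You propose to control the crossing resistance through such a slab ``by the same recursive decomposition $\bT^m=\bT_\gamma\tileprod\bT^{m-1}$'' and \pref{obs:column}(b). But the recursion in \pref{lem:eff-resist-general} applies to full copies of $\bT^m$, not to sub-slabs, and a sub-slab of a copy of $\bT^m$ is generally \emph{not} a union of tiles of the sub-tiling: this would require $\beta_s/\beta_{s+1}$ to divide the column heights $c_i^{(m)}$ (in particular to divide $b^m$), which fails in general (e.g.\ $b=5$, $k=7$, where $\beta_s/\beta_{s+1}$ can be $2$ while $b^m$ is odd). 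So the ``centered analogue'' you invoke is not a routine repetition of the existing lemmas; it is a different and nontrivial estimate, and you have not supplied an argument for it. You would also need a separate argument for your claim that the resistance from the uniform distribution on \emph{all} of $\bC$ to $\scrR(\bC)$ is $\lesssim_k\rho(\bT^m)$; that is not the same quantity as $\rho(\bT^m)$, and it is cleaner to take $\scrL(\bC)$ as the source, as the paper does.

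The paper's proof (\pref{lem:annular-gn}) sidesteps the column-by-column mismatch entirely with a gcd device. With $t-s\lesssim_k 1$ super-columns to cross, it sets $\xi=\gcd(\beta_s,\ldots,\beta_t)$ and observes, via \pref{obs:column}(b) (consecutive column heights divide one another) together with $\max_i\beta_i/\min_i\beta_i\lesssim_k 1$, that $\xi\gtrsim_k\max_i\beta_i$. Consequently $\hat\cD=\cD_s\cup\cdots\cup\cD_t$ is a vertical stack of $\xi$ translates of a single rectangle $\hat\bT=\bT_{\langle\beta_s/\xi,\ldots,\beta_t/\xi\rangle}\tileprod\bT^h$ whose left and right boundaries are genuinely aligned tilings, so there is no sub-slab issue at all. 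One then applies \pref{cor:side} once to the $O_k(1)$-column tiling $\hat\bT$ to get $\rho(\hat\bT)\lesssim_k\rho(\bT^h)\lesssim\Gamma_{b,k}^h$, and compares the source $\1_{\scrL(\bS)}/|\scrL(\bS)|$ to $\1_{\scrL(\hat\bA)}/|\scrL(\hat\bA)|$ using $|\scrL(\bS)|\asymp_k|\scrL(\hat\bA)|$. In short: the gcd step is not a cosmetic simplification, it is the idea that makes the ``crossing'' estimate a one-shot application of \pref{cor:side} rather than a new lemma, and it is exactly the ingredient missing from your proposal.
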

To see that this yields \pref{thm:intro-resist}, consider some $k \geq b^2$,
corresponding to the restriction $\mathsf{d}_g(b,k) \geq 3$.  Then for
all positive integers $p \geq 1$, we have
$\mathsf{d}_g(b^p,k^p)=\mathsf{d}_g(b,k)$ and
recalling \eqref{eq:dw-calc},
\[
   \lim_{p \to \infty} \log_{b^p}\!\left(\Gamma_{b^p,k^p}\right) = -1\,.
\]
To prove \pref{thm:annular-gbk}, it suffices to show the following.

\begin{lemma}\label{lem:annular-gn}
   For every $n \geq 1$, $k \geq b \geq 4$, there is a constant $C=C(k)$
   such that for $G=G(\bT^n_{(b,k)})$, we have
   \[
      \reff^{G}\left(B_G(x, R) \leftrightarrow V(G) \setminus B_G(x, 2 R)\right) \leq C R^{\log_b(\Gamma_{b,k})},\quad
      \forall x \in V(G), 1 \leq R \leq \diam(G)/C\,.
   \]
\end{lemma}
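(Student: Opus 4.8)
The plan is to upper bound the annular resistance by constructing a unit flow of energy $\lesssim_k R^{\log_b \Gamma_{b,k}}$ from $B_G(x,R)$ to $V(G)\setminus B_G(x,2R)$, assembled from a bounded chain of self-similar cells at the scale of $R$. We may assume $R$ exceeds a suitable constant depending on $k$, since otherwise the claimed bound holds trivially after enlarging $C$. Let $h\ge 1$ be the largest integer with $3b^h\le R$, so $b^h\asymp_k R$ and, since $R\le\diam(G)/C$ and $\diam(G)\le 3b^n$, also $b^{n-h}\ge C$; thus for $C=C(k)$ large enough $b^{n-h}$ exceeds any prescribed constant depending on $k$. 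Decompose $\bT^n_{(b,k)}=\bT^{n-h}_{(b,k)}\tileprod\bT^h_{(b,k)}$; this cuts $G$ into \emph{super-columns} $\cD_1,\dots,\cD_{b^{n-h}}$ (the columns of $\bT^{n-h}_{(b,k)}$), each a vertical stack of copies of $G(\bT^h_{(b,k)})$, which we call \emph{cells}. By \pref{lem:HabnDiam} each cell has diameter at most $3b^h\le R$. Let $\cB_0$ be the cell containing $x$, inside super-column $\cD_{i_0}$; since distances in $G[\cB_0]\cong G(\bT^h_{(b,k)})$ dominate those in $G$, \pref{lem:HabnDiam} gives $\cB_0\subseteq B_G(x,3b^h)\subseteq B_G(x,R)$, so $\cB_0$ is a legitimate source region. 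On the other hand, collapsing each column of $\bT^n_{(b,k)}$ to a single vertex only decreases distances, so $d_G(u,v)$ is at least the difference of the column indices of $u$ and $v$; hence every super-column $\cD_i$ with $|i-i_0|\ge m$ lies in $V(G)\setminus B_G(x,2R)$, where $m\seteq\lceil 2R/b^h\rceil+2=O_k(1)$. Since $b^{n-h}>2m$, at least one side of $i_0$ contains $m$ further super-columns; assume it is the right.

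Now route a unit flow from $\cB_0$ to super-column $\cD_{i_0+m}$ (a legitimate sink). First, inside $\cB_0$ route from the uniform measure on $\cB_0$ to the uniform measure on $\scrR(\cB_0)$; by Rayleigh monotonicity this has energy at most $\rho(\bT^h_{(b,k)})$, which is $\asymp_k\Gamma_{b,k}^h$ by \pref{lem:eff-resist-general}. Next, march rightwards cell by cell, $\cB_0\to\cB_1\to\cdots\to\cB_m$, where $\cB_\ell$ is the cell, or parallel batch of cells, in $\cD_{i_0+\ell}$ adjacent to $\cB_{\ell-1}$ across the boundary between consecutive super-columns. By \pref{obs:column}(b) (applied to $\bT^{n-h}_{(b,k)}$) the cell heights of consecutive super-columns stand in a divisor relation, of ratio at most $k$ since $\alpha_{\bT^{n-h}_{(b,k)}}\le k$ by \pref{lem:alphaHabn}; so at each step we either enter a batch of at most $k$ cells along their \emph{full} left edges, or enter a single taller cell along a contiguous fraction at least $1/k$ of its left edge. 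In both cases the junction carries $\asymp_k b^h$ edges and hence contributes energy $\lesssim_k b^{-h}\le\Gamma_{b,k}^h$ (using $\Gamma_{b,k}\ge 2/b$). Finally, spread the flow over all of $\cB_m\subseteq V(G)\setminus B_G(x,2R)$, at cost again $\lesssim_k\Gamma_{b,k}^h$ by Rayleigh monotonicity and \pref{lem:eff-resist-general}.

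The one non-routine ingredient is the cost of \emph{crossing} a cell in the middle leg: for every $\delta>0$ there should be $\delta'=\delta'(k,\delta)>0$ and $C'=C'(k,\delta)$ such that a unit flow entering $G(\bT^h_{(b,k)})$ along a contiguous portion of $\scrL$ of size at least $\delta b^h$ can be routed to exit along a contiguous portion of $\scrR$ of size at least $\delta'b^h$ with energy at most $C'\Gamma_{b,k}^h$, uniformly in $h$ $(\star)$. Granting $(\star)$: the entry fraction starts at $1$ and is divided by at most $k$ at each of the $m=O_k(1)$ steps, so it stays above a constant $\delta_0(k)>0$ throughout; thus each of the $O_k(1)$ cell-crossings costs $\lesssim_k\Gamma_{b,k}^h$, and summing all the contributions yields a flow of total energy $\lesssim_k\Gamma_{b,k}^h\asymp_k R^{\log_b\Gamma_{b,k}}$ (using $b^h\asymp_k R$ and $\Gamma_{b,k}^h=(b^h)^{\log_b\Gamma_{b,k}}$). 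Since $\reff^G(B_G(x,R)\leftrightarrow V(G)\setminus B_G(x,2R))$ is at most the energy of this flow, the lemma follows with $C=C(k)$ absorbing all the implicit constants.

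It remains to prove $(\star)$, and this is the main obstacle. The naive strategy — spread the incoming flow uniformly along $\scrL$ and then invoke $\rho(\bT^h_{(b,k)})\asymp_k\Gamma_{b,k}^h$ — fails decisively, because pushing a unit of flow along a vertical boundary path of length $b^h$ costs $\asymp b^h\gg\Gamma_{b,k}^h$; all vertical spreading must instead happen two-dimensionally, inside sub-cells. I would prove $(\star)$ by induction on $h$, writing $\bT^h_{(b,k)}=\bT_{(b,k)}\tileprod\bT^{h-1}_{(b,k)}$ as a horizontal concatenation $\bA_1\tilecat\cdots\tilecat\bA_b$ with $\bA_i$ a vertical stack of $\gamma_i^{(b,k)}$ copies of $G(\bT^{h-1}_{(b,k)})$. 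The incoming flow meets $\bA_1$ within at most two of its copies; route it across those copies to their full right edges using the inductive hypothesis (an entry of relative size $\ge\delta$ becomes, inside a copy, a contiguous entry of size $\gtrsim_{k,\delta} b^{h-1}$). The flow now occupies a vertically contiguous band of height $\asymp_{k,\delta}1/b$; at each subsequent stack $\bA_i$ it enters the $\asymp_k\gamma_i^{(b,k)}/b$ copies spanning that band along their full left edges, crosses them in parallel at cost $\lesssim_k\Gamma_{b,k}^{h-1}\cdot b/\gamma_i^{(b,k)}$ (via $\rho(\bT^{h-1}_{(b,k)})\lesssim_k\Gamma_{b,k}^{h-1}$ and the parallel law), and moves on; the internal junctions between consecutive $\bA_i$ each cost $\lesssim_k b^{-h}\le\Gamma_{b,k}^h$. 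Summing $b/\gamma_i^{(b,k)}$ over $i$ reproduces exactly the telescoping factor $\sum_i 1/\gamma_i^{(b,k)}=\Gamma_{b,k}$ of \pref{lem:eff-resist-general} (up to the harmless factor $b\le k$), so the total is $\lesssim_{k,\delta}\Gamma_{b,k}\cdot\Gamma_{b,k}^{h-1}=\Gamma_{b,k}^h$, closing the induction with a constant depending only on $k$ and $\delta$. The delicate bookkeeping is entirely in $(\star)$: tracking the divisibility of cell heights across boundaries, the contiguity of the portions of $\scrL$ and $\scrR$ involved, and checking that the parallel batches are genuinely entered along full edges; everything else is Rayleigh monotonicity, the triangle inequality for effective resistance, and the already-established estimate $\rho(\bT^m_{(b,k)})\asymp_k\Gamma_{b,k}^m$.
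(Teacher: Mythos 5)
Your high-level routing plan is sound: fix the scale $h \approx \log_b R$, decompose into super-columns of $\bT^h$-cells, observe that the cell $\cB_0$ containing $x$ is inside $B_G(x,R)$ and that super-columns at distance $\ge m = O_k(1)$ lie outside $B_G(x,2R)$, and then chain a flow across $O_k(1)$ cells, each contributing $\lesssim_k \Gamma_{b,k}^h$. You also correctly locate the crux in the claim $(\star)$, that a unit flow entering $G(\bT^h_{(b,k)})$ along a contiguous $\delta$-fraction of $\scrL$ can cross to a contiguous $\delta'$-fraction of $\scrR$ at cost $\lesssim_{k,\delta} \Gamma_{b,k}^h$. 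But $(\star)$ is not proved, and the sketch you give does not close. Two concrete problems: (i) after passing the flow through the two copies of $\bT^{h-1}$ inside $\bA_1$ by the inductive hypothesis, the two exit portions (each a $\delta'$-fraction of $\scrR$ of its own copy) need not be adjacent to each other, so the combined exit set need not be a single contiguous band of height $\asymp 1/b$; (ii) more importantly, the inductive hypothesis at scale $h-1$ must be invoked with a different $\delta$ than the one you are trying to prove the statement for at scale $h$, so the induction does not close as stated --- one would need a fixed-point formulation $\delta' = \delta$ together with an argument that the entry fraction does not degrade across a cell, and establishing this (while also controlling contiguity across copy boundaries) is precisely the hard work you have deferred.

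The paper's proof avoids $(\star)$ entirely by a gcd device. Writing $\xi = \gcd(\beta_s,\ldots,\beta_t)$ and using \pref{obs:column}(b), one has $\xi \gtrsim_k \max_i \beta_i$, so the block $\hat{\cD} = \cD_s \cup \cdots \cup \cD_t$ is a vertical stack of $\xi$ copies of $\hat{\bT} \seteq \bT_{\langle \beta_s/\xi,\ldots,\beta_t/\xi\rangle}\tileprod\bT^h$, a tiling whose outer factor has size $O_k(1)$. The copy $\hat{\bA}$ of $\hat{\bT}$ containing $x$ then satisfies $\scrL(\bS) \subseteq \scrL(\hat{\bA})$ with $|\scrL(\hat{\bA})| \lesssim_k |\scrL(\bS)|$, and \pref{cor:side} together with the parallel law and \pref{lem:eff-resist-general} give $\rho(\hat{\bT}) \lesssim_k \Gamma_{b,k}^h$. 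This is a single application of the already-established left--right resistance machinery to one cleanly chosen self-similar subgraph, rather than a chain of cell-crossings each of which requires tracking contiguity and band width. In short: your strategy would work if you could prove $(\star)$, but you haven't, and the paper's gcd trick is precisely the idea that makes the delicate bookkeeping unnecessary.
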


\begin{proof}
   Denote $\bT \seteq \bT_{(b,k)}$.
   Consider some value $1 \leq R \leq \diam(G)/C$, and
   define $h \seteq \lfloor \log_b(R/3)\rfloor$.

   Let $\cC_1,\ldots,\cC_{b^n}$ denote the columns of $\bT^n$ and
   writing $\bT^n = \bT^{n-h} \tileprod \bT^h$, let us partition the
   columns into consecutive sets $\cD_1,\ldots,\cD_{b^{n-h}}$ (as
   in the proof of \pref{lem:reff-times-conductance}), where
   $\cD_i = \cC_{(i-1) b^h + 1} \cup \cdots \cup \cC_{i b^h}$.
   For $1 \leq i \leq b^{n-h}$, let $\beta_i$ denote the number of tiles
   in the $i$th column of $\bT^{n-h}$ so that $\cD_i$ consists
   of $\beta_i$ copies of $\bT^h$ stacked vertically.

   Fix some vertex $x \in V(G)$ and
   suppose that $x \in \cD_s$ for some $1 \leq s \leq b^{n-h}$.
   Denote $\Delta \seteq 9b$.
   By choosing $C$ sufficiently large, we can assume that
   $b^{n-h} > 2\Delta$, so that either $s \leq b^{n-h} -\Delta$ or $s \geq 1+\Delta$.
   Let us assume that $s \leq b^{n-h}-\Delta$, as the other case is treated symmetrically.
   Define $t \seteq \lceil s+2+6b\rceil$ so that $t-s \leq \Delta$, and
   \begin{equation}\label{eq:st-dist}
      d_G(\cD_s, \cD_{t}) \geq b^h (t-s-1) \geq (t-s-1) \frac{R}{3b} > 2 R\,.
   \end{equation}

   Denote $\xi \seteq \gcd(\beta_s,\beta_{s+1},\ldots,\beta_t)$.
   We claim that
   \begin{equation}\label{eq:gcd}
      \xi \gtrsim_{k} \max(\beta_s,\beta_{s+1},\ldots,\beta_{t})\,.
   \end{equation}
   This follows because $\min(\beta_i,\beta_{i+1}) \mid \max(\beta_i,\beta_{i+1})$
   for all $1 \leq i < b^n$ (cf. \pref{obs:column}(b)),
   and moreover the ratio $\max(\beta_i,\beta_{i+1})/\min(\beta_i,\beta_{i+1})$
   is bounded by a function depending only on $k$.
   Since $t-s \lesssim_{k} 1$, this verifies \eqref{eq:gcd}.

   Denote $\hat{\cD} \seteq \cD_s \cup \cdots \cup \cD_t$.
   One can verify that $\hat{\cD}$ is a vertical stacking of $\xi$
   copies of $\hat{\bT} \seteq \bT_{\langle \beta_s/\xi,\ldots,\beta_t/\xi\rangle} \tileprod \bT^h$, and
   \pref{cor:side} implies that
   \begin{equation}\label{eq:hatT}
      \rho(\hat{\bT}) \lesssim_{k} \rho(\bT^h) \lesssim \Gamma_{b,k}^h\,,
   \end{equation}
   with the final inequality being the content of \pref{lem:eff-resist-general}. 

   Let $\hat{\bA}$ be the copy of $\hat{\bT}$ that contains $x$, and let
   $\bS$ be the copy of $\bT^h$ in $\bT^n = \bT^{n-h} \tileprod \bT^h$ that contains $x$.
   Since $\xi$ divides $\beta_s$, it holds that $\bS \subseteq \hat{\bA}$ and
   $\scrL(\bS) \subseteq \scrL(\hat{\bA})$.
   We further have
   \begin{equation*}\label{eq:rel-sizes}
      |\scrL(\hat{\bA})| = |\scrL(\hat{\bT})| = (\beta_s/\xi) |\scrL(\bT^h)| = (\beta_s/\xi) |\scrL(\bS)| \lesssim_{k} |\scrL(\bS)|\,.
   \end{equation*}
   This yields
   \begin{equation}\label{eq:eff-compare}
      \reff^G(\1_{\scrL(\bS)}/|\scrL(\bS)| \leftrightarrow \scrR(\hat{\bA}))
      \lesssim_{k}
      \reff^G(\1_{\scrL(\hat{\bA})}/|\scrL(\hat{\bA})| \leftrightarrow \scrR(\hat{\bA})),
   \end{equation}
   where we have used the hybrid notation:  For $s \in \ell_1(V)$,
   \[
      \reff^G(s \leftrightarrow U) \seteq \inf \left\{ \reff^G(s,t) : \supp(t) \subseteq U, \|t\|_1=\|s\|_1\right\}.
   \]

   Therefore,
   \begin{align*}
      \reff^G(\scrL(\bS) \leftrightarrow \scrR(\hat{\bA}))
      &\leq
      \reff^G(\1_{\scrL(\bS)}/|\scrL(\bS)| \leftrightarrow \scrR(\hat{\bA})) \\
      &\stackrel{\mathclap{\eqref{eq:eff-compare}}}{\lesssim_{k}}
      \reff^G(\1_{\scrL(\hat{\bA})}/|\scrL(\hat{\bA})| \leftrightarrow \scrR(\hat{\bA})) 
     =\rho(\hat{\bT})
      \stackrel{\eqref{eq:hatT}}{\lesssim_{k}} \Gamma_{b,k}^h\,.
   \end{align*}

   Since $\diam_G(\bS) \leq 3 b^h \leq R$ and $x \in \bS$, it holds that $\bS \subseteq B_G(x,R)$.
   On the other hand, since $x \in \bS$, \eqref{eq:st-dist} shows that $B_G(x, 2 R) \cap \scrR(\hat{\bA}) = \emptyset$.
   We conclude that
   \[
      \reff^G(B_G(x,R) \leftrightarrow V(G) \setminus B_G(x,2 R)) \leq
      \reff^G(\scrL(\bS) \leftrightarrow \scrR(\hat{\bA})) \lesssim_{k} \Gamma^h_{b,k} \lesssim_{k} R^{\log_b(\Gamma_{b,k})},
   \]
   as desired.
\end{proof}

\subsection{Complements of balls are connected}

Let us finally prove \pref{thm:intro-complement}.
Recall the setup from \pref{sec:hit-gbk}:  We take $k \geq b \geq 4$, define $G_n = G(\bT^n_{(b,k)})$, and use
$(\tilde{G}_n,\tilde{\rho}_n)$ to denote the cylindrical version,
which satisfies $\{(\tilde{G}_n,\tilde{\rho}_n)\} \todl (G_{b,k},\rho)$.

Partition the vertices of $\tilde{G}_n$ into columns $\cC_1, \ldots, \cC_{b^n}$ in
the natural way (as was done in \pref{sec:rw-speed} and \pref{sec:anular-resist}).
In what follows, we say that a set {\em $S \subseteq V(\tilde{G}_n)$ is connected} to
mean that $\tilde{G}_n[S]$ is a connected graph.
\begin{definition}
	Say that a set of vertices $U \subseteq V(\tilde{G}_n)$ is {\em vertically convex} 
	if 
	for all $1 \leq i \leq b^n$, either $U \cap \cC_i = \emptyset$ or 
   $U \cap \cC_i$ is connected.
\end{definition}

\begin{observation}
\label{obs:column-connected}
	Consider a connected set $U \subseteq \cC_i$ for some $1 \leq i \leq b^n$. Then for $1 \leq i' \leq b^n$ with $|i - i'| \leq 1$, 
   the set $B_{\tilde{G}_n}(U, 1) \cap \cC_{i'}$ is connected.
\end{observation}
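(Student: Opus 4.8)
The plan is to exploit two structural features of the cylindrical graph $\tilde{G}_n$: within a single column the induced subgraph is a cycle, and between two consecutive columns the adjacency pattern has an ``interval'' shape forced by the divisibility in \pref{obs:column}(b). Throughout, for a column $\cC_i$ of $\bT^n_{(b,k)}$ let $\beta_i = |\cC_i|$ and list its tiles from bottom to top as $a_1, \ldots, a_{\beta_i}$.

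First I would record the within-column structure. All tiles of $\cC_i$ have the same width, so $a_j$ and $a_{j+1}$ meet along a horizontal segment of positive length (hence are adjacent in $G_n$), whereas $a_j$ and $a_{j'}$ are disjoint when $|j-j'|\geq 2$; together with the cylindrical edge joining $a_{\beta_i}$ to $a_1$, this shows $\tilde{G}_n[\cC_i]$ is a cycle $C_{\beta_i}$ (read as a single vertex, or a doubled edge, when $\beta_i\leq 2$). Consequently a connected set $U\subseteq\cC_i$ is either all of $\cC_i$ or a cyclic interval $\{a_p, a_{p+1}, \ldots, a_q\}$ of consecutive tiles (indices modulo $\beta_i$). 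I would also note the reduction
\[
   B_{\tilde{G}_n}(U,1)\cap\cC_{i'} = \begin{cases} U\cup N_{\cC_i}(U), & i'=i,\\ N_{\cC_{i'}}(U), & |i-i'|=1,\end{cases}
\]
where $N_{\cC_{i'}}(\cdot)$ denotes the set of neighbors lying in $\cC_{i'}$. The case $i'=i$ is then immediate: extending a cyclic interval of a cycle by one vertex at each end is again a cyclic interval, and the whole cycle maps to itself; either way the set is connected.

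For $|i-i'|=1$, say $i'=i+1$ (the case $i'=i-1$ being symmetric), I would invoke \pref{obs:column}(b): one of $\beta_i,\beta_{i+1}$ divides the other, with ratio $m$. Write the tiles of $\cC_{i+1}$ as $c_1,\ldots,c_{\beta_{i+1}}$. If $\beta_i\mid\beta_{i+1}$, comparing the vertical intervals $[(j-1)/\beta_i, j/\beta_i]$ and $[(l-1)/\beta_{i+1}, l/\beta_{i+1}]$ shows $a_j$ is adjacent exactly to the block $\{c_{(j-1)m+1},\ldots,c_{jm}\}$ of $m$ consecutive tiles, and the blocks of $a_j$ and $a_{j+1}$ abut; hence for $U=\{a_p,\ldots,a_q\}$ we get $N_{\cC_{i+1}}(U)=\{c_{(p-1)m+1},\ldots,c_{qm}\}$, again a cyclic interval, while $N_{\cC_{i+1}}(\cC_i)=\cC_{i+1}$. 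If instead $\beta_{i+1}\mid\beta_i$ with ratio $m$, then $c_l$ is adjacent to $\{a_{(l-1)m+1},\ldots,a_{lm}\}$, so $c_l\in N_{\cC_{i+1}}(U)$ iff that block meets $U$; since $U$ is a cyclic interval, the set of such $l$ is again a cyclic interval. In every case $B_{\tilde{G}_n}(U,1)\cap\cC_{i+1}$ is a cyclic interval of, or all of, the cycle $\tilde{G}_n[\cC_{i+1}]$, hence connected.

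The only genuine content — and the step requiring the most care — is the interval form of the adjacency between consecutive columns, which is exactly what \pref{obs:column}(b) buys us: the divisibility makes the horizontal grid lines of the two columns nested, so each tile of the coarser column sees a contiguous run of tiles of the finer one and consecutive runs share an endpoint. Everything else is bookkeeping with cyclic intervals; the only remaining care is the treatment of columns with one or two tiles and the wrap-around indexing, both absorbed into reading ``cyclic interval'' to include the degenerate full-cycle and singleton cases.
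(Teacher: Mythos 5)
Your proof is correct; the paper states this as an unproved observation, and your argument (each column of $\tilde{G}_n$ induces a cycle, connected subsets are cyclic arcs, and the neighbor set of an arc in an adjacent column is again an arc because each tile sees a contiguous, abutting block of tiles there) is exactly the intended justification. One minor remark: the divisibility from \pref{obs:column}(b) is convenient but not actually needed for the interval structure of cross-column adjacency --- since the tiles of each column are vertically stacked and span the full height, any tile's neighbors in an adjacent column form a contiguous block and consecutive tiles' blocks overlap or abut regardless of nesting --- so invoking it is harmless but slightly stronger than required.
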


Momentarily, we will argue that balls in $\tilde{G}_n$ are vertically convex.

\begin{lemma}
\label{lem:simply-connected}
   Consider any $A \in \bT$ and $R \geq 0$.
   It holds that $B_{\tilde{G}_n}(A,R)$ is vertically convex.
\end{lemma}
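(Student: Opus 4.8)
The plan is to prove the statement by induction on $R$, with \pref{obs:column-connected} as the main workhorse. For the base case $R=0$, the ball $B_{\tilde G_n}(A,0)=\{A\}$ is a single tile, hence lies inside a single column and is trivially vertically convex. For the inductive step I would fix a column index $i$ and write, using that a vertex at distance $R+1$ from $A$ is precisely one that lies in or is adjacent to $B_{\tilde G_n}(A,R)$, and that every edge of $\tilde G_n$ joins vertices in the same column or in consecutive columns,
\[
   B_{\tilde G_n}(A,R+1)\cap \cC_i \;=\; \bigcup_{i'\,:\,|i'-i|\le 1}\Big(B_{\tilde G_n}\!\big(U_{i'},1\big)\cap\cC_i\Big),
   \qquad U_{i'}\seteq B_{\tilde G_n}(A,R)\cap\cC_{i'}.
\]
By the induction hypothesis each $U_{i'}$ is empty or a connected subset of the single column $\cC_{i'}$, so \pref{obs:column-connected} gives that each of the (at most three) pieces $W_{i'}\seteq B_{\tilde G_n}(U_{i'},1)\cap\cC_i$ is empty or connected. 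Thus $B_{\tilde G_n}(A,R+1)\cap\cC_i$ is a union of at most three connected subsets $W_{i-1},W_i,W_{i+1}$ of the column $\cC_i$, and the whole game is to show that this union is connected.

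For that it is enough to verify that the nerve of the nonempty pieces among $W_{i-1},W_i,W_{i+1}$ is connected, since a finite union of connected sets whose nerve is connected is connected. I would establish two things. First, $W_i$ (the ``self'' piece) is empty exactly when $U_i=\eset$, and since the set of column indices met by a ball $B_{\tilde G_n}(A,R)$ is an interval of indices --- any shortest path from $A$ runs through every column between $A$'s column and its other endpoint --- this forces: if $W_i=\eset$ then at most one of $W_{i-1},W_{i+1}$ is nonempty, so the nerve is trivially connected. Second, whenever $W_i\ne\eset$ I would show that $W_i$ meets each nonempty side piece. Say $U_{i-1}\ne\eset$ and $U_i\ne\eset$; I want a tile $w\in\cC_i$ with $w\in B_{\tilde G_n}(A,R)$ and $w$ adjacent to some vertex of $U_{i-1}$ (then $w\in W_i\cap W_{i-1}$). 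If $A$'s column index is $\le i-1$, take $v\in U_i$ and a shortest path from $A$ to $v$; it ends in $\cC_i$ and starts strictly to the left of $\cC_i$, so letting $u$ be its last vertex in $\cC_{i-1}$ and $w$ the next vertex on the path, $w$ must lie in $\cC_i$ and both $u,w$ lie in $B_{\tilde G_n}(A,R)$. If $A$'s column index is $\ge i$, take $u\in U_{i-1}$ and a shortest path from $A$ to $u$, let $w$ be its last vertex in $\cC_i$ and $u'$ the next vertex; then $w\in\cC_i\cap B_{\tilde G_n}(A,R)$ is adjacent to $u'\in\cC_{i-1}\cap B_{\tilde G_n}(A,R)$. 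The case $U_{i+1}\ne\eset$ is symmetric. This makes the nerve a star centered at $W_i$, hence connected, which closes the induction.

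The genuinely routine inputs are all already available: the divisibility structure of consecutive columns from \pref{obs:column}(b), which is what makes \pref{obs:column-connected} hold (the horizontal neighborhood in $\cC_i$ of an arc of $\cC_{i\pm1}$ is again an arc), and the elementary ``connected nerve implies connected union'' fact. I expect the only delicate point to be the gluing argument of the second paragraph --- checking that the up-to-three arcs in a fixed column genuinely overlap --- where the care is entirely in the shortest-path/column-crossing case analysis and in recording that the range of column indices met by a ball is an interval.
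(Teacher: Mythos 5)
Your proposal is correct and takes essentially the same route as the paper: induct on $R$, decompose $B_{\tilde G_n}(A,R+1)\cap\cC_i$ into the three one-step neighborhoods of $B_{\tilde G_n}(A,R)\cap\cC_{i'}$ for $|i'-i|\le 1$, invoke \pref{obs:column-connected} to see each piece is a connected arc, and glue them through the center piece. The only difference is organizational: the paper factors the gluing through the auxiliary \pref{lem:column-simply-connected} and derives the overlap of pieces from connectedness of $U=B(A,R)\cap(\cC_{i-1}\cup\cC_i\cup\cC_{i+1})$, whereas you inline the same gluing and replace that appeal with an explicit shortest-path and interval-of-column-indices argument.
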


With this lemma in hand, it is easy to establish the following theorem
which, in conjunction with \pref{lem:gbk}, implies \pref{thm:intro-complement}.

\begin{theorem}
   Almost surely, the complement of every ball in $G_{b,k}$ is connected.
\end{theorem}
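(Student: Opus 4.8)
Connectivity of the complement of a ball is \emph{not} a local property of $(G_{b,k},\rho)$, so it cannot be transferred directly through the weak convergence $\{(\tilde G_n,\tilde\rho_n)\}\todl(G_{b,k},\rho)$. The plan is instead to isolate a \emph{local} consequence of \pref{lem:simply-connected} (vertical convexity of balls) together with a handful of almost-sure \emph{structural} features of $G_{b,k}$, and then to give a deterministic combinatorial argument.

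First I would record the a.s.\ structure of $(G_{b,k},\rho)$, carrying the column partition $V(\tilde G_n)=\cC_1\sqcup\dots\sqcup\cC_{b^n}$ along as marks so that $G_{b,k}$ inherits a ``column'' structure. Because $\tilde\rho_n$ is sampled from the stationary measure and the extreme columns $\cC_1,\cC_{b^n}$ are the \emph{shortest} of all columns---they have exactly $b^n$ tiles by \pref{obs:column}(a), whereas $|\bT^n_{(b,k)}|=(bk)^n$---for each fixed $C$ one has $\Pr[\tilde\rho_n\text{ lies within }C\text{ columns of }\cC_1\text{ or }\cC_{b^n}]\le 2Ck^n/(bk)^n\to 0$, and similarly $\Pr[\tilde\rho_n\text{ lies within cyclic distance }C\text{ of the seam of its own column}]\le (2C+1)/b^n\to 0$. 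Passing to the limit and intersecting over $C\in\N$, almost surely: (i) every column of $G_{b,k}$ is a bi-infinite path; (ii) there are infinitely many columns on each side of $\rho$'s column, and consecutive columns are joined by edges; (iii) there is $\lambda=\lambda(k)$ such that, after orienting each column, a vertex at (signed) position $t$ along a column has all of its neighbours in an adjacent column at positions in $[t/\lambda-\lambda,\ \lambda t+\lambda]$, and in particular every vertex of a column has a neighbour in each adjacent column (this uses \pref{obs:column}(b) and \pref{lem:alphaHabn}, which together force the tiles of adjacent columns to line up with a ratio bounded by $k$). Finally, by \pref{lem:simply-connected} and the coupling in the proof of \pref{lem:gbk}---which, for each fixed $r$, identifies the marked rooted ball $B_{G_{b,k}}(\rho,r)$ with $B_{\tilde G_n}(\tilde\rho_n,r)$ with probability tending to $1$---almost surely (iv) \emph{every} ball $B_{G_{b,k}}(\rho,R)$ is vertically convex, i.e.\ meets each column in a (necessarily finite) set of consecutive vertices.

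The remaining step is deterministic: any graph carrying a $\Z$-indexed column partition satisfying (i)--(iii), together with any \emph{finite} vertically convex set $B$, has connected complement; applying this with $B=B_{G_{b,k}}(\rho,R)$ finishes the proof. Since $B$ is finite it meets only finitely many columns, and by vertical convexity there is $M$ so that, in each column it meets, $B$ is confined to positions in $[-M,M]$; in particular there are columns on both sides of $B$ that are disjoint from it, and each such column lies entirely in the complement. Let $U^{+}$ (resp.\ $U^{-}$) be the set of all vertices at position $>M$ (resp.\ $<-M$) in their column. Given $v\in U^{+}$, walk up $v$'s own column to a position exceeding $\lambda(M+\lambda)$---staying inside $U^{+}$---and then step to a neighbour in an adjacent column, which by (iii) still has position $>M$; iterating over columns shows $U^{+}$ is connected, and symmetrically $U^{-}$ is connected; any column disjoint from $B$ runs from $U^{+}$ to $U^{-}$ and so shows that $U^{+}\cup U^{-}$ lies in a single component of the complement. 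Finally, for any $v\notin B$: in $v$'s column $B$ occupies a set of consecutive positions inside $[-M,M]$ (or none), and $v$ lies on one side of it, so walking straight in that direction along the column keeps $v$ out of $B$ all the way into $U^{+}$ or $U^{-}$. Hence every vertex of the complement lies in the same component as $U^{+}\cup U^{-}$, so the complement is connected.

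The point demanding care is precisely the junction between the local input and the global conclusion: one must verify that vertical convexity of \emph{every} ball and the bi-infinite/aligned column geometry of (i)--(iii) are genuinely almost-sure properties of $G_{b,k}$ inherited from the finite approximants---in particular that the column marking survives in the weak limit, and that $\tilde\rho_n$ is a.s.\ eventually far both from the horizontal boundary and from the vertical seam of $\tilde G_n$. (One could instead argue entirely in the finite graphs: the same combinatorial reasoning shows that the complement of any ball of $\tilde G_n$ that does not contain an entire column is connected, and then one uses that a finite ball in the limit cannot contain a column, since columns there are bi-infinite.)
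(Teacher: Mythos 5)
Your proposal is correct and takes a genuinely different route from the paper's. The paper proves connectedness of $V(\tilde G_n)\setminus B_{\tilde G_n}(x,R)$ directly in the finite cylinders---using \pref{lem:simply-connected} for vertical convexity and then a clean ``horizontal-line-avoidance'' argument to exhibit a left-to-right path in the complement---and then asserts in a single sentence that this transfers to the limit $G_{b,k}$. You are right to flag that this transfer is not automatic: connectedness of the complement of a ball is not a local event, and the naive transfer fails in general (the $n$-cycles, in which every ball-complement is connected, converge weakly to $\Z$, where no ball-complement is connected). The paper is implicitly relying on structural facts about $G_{b,k}$, effectively one-endedness, that are not spelled out in the proof. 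Your fix is to make that structure explicit: carry the column marking through the limit, establish the almost-sure properties (i)--(iv), and then run a deterministic connectivity argument in $G_{b,k}$ itself. This is valid and, in my view, more complete than what is on the page. Two places where you should tighten: for (iii) to make sense you need a consistent choice of column origin and orientation across all columns (the $y$-coordinate inherited from the tilings does the job), and the iteration proving $U^{+}$ (resp.\ $U^{-}$) connected should start from a height that grows with the number of columns to be crossed rather than from a fixed threshold, since each step across columns can lose a factor of $\lambda$. Finally, the finite-graph alternative you sketch at the end is the one closest to what the paper actually does, but note that in $\tilde G_n$ columns are cycles, not bi-infinite paths, so ``the same combinatorial reasoning'' does not apply verbatim there---the paper's horizontal-line-avoidance argument is the right substitute in the finite setting.
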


\begin{proof}
   Since $\{(\tilde{G}_n,\tilde{\rho}_n)\} \todl G_{b,k}$, it suffices to argue that
   for every $n \geq 1$, $x \in V(\tilde{G}_n)$, and $R \leq b^n/3$, the set $U \seteq V(\tilde{G}_n) \setminus B_{\tilde{G}_n}(x,R)$
      is connected.

      By \pref{lem:simply-connected}, it holds that $B_{\tilde{G}_n}(x,R)$ is vertically convex.
      Since the complement of a vertically convex set is vertically convex (given that every column in $\tilde{G}_n$
      is isomorphic to a cycle), $U$ is vertically convex as well.
      To argue that $U$ is connected, it therefore suffices to prove that there is a path from $\scrL(\bT^n_{(b,k)})$ to $\scrR(\bT^n_{(b,k)})$ in 
      $\tilde{G}_n[U]$.

      In fact, the tiles in $\bT^n_{(b,k)}$ have height at most $b^{-n}$, and therefore the projection of $K \seteq \region{B_{\tilde{G}_n}(x,R)}$ onto $\{0\} \times [0,1]$
      has length at most $(2R+1) b^{-n} < 1$.  Hence there is some height $h \in [0,1]$ such that a horizontal line $\ell$ at height $h$
      does not intersect $K$.  The set of tiles $\{ A \in V(\tilde{G}_n) : \ell \cap \region{A} \neq \emptyset \}$ therefore
      contains a path from $\scrL(\bT^n_{(b,k)})$ to $\scrR(\bT^n_{(b,k)})$ that is contained in $\tilde{G}_n[U]$, completing the proof.
\end{proof}

We are left to prove \pref{lem:simply-connected}.
To state the next lemma more cleanly, let us denote $\cC_0 = \cC_{b^n+1} = \emptyset$.

\begin{lemma}
\label{lem:column-simply-connected}
	Consider $1 \leq i \leq b^n$ and let $U \subseteq \cC_{i-1} \cup \cC_i \cup \cC_{i+1}$ be a connected, vertically convex subset of vertices.
   Then
   $B_{\tilde{G}_n}(U, 1) \cap \cC_i$
   is connected as well. 
\end{lemma}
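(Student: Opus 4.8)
The plan is to write $B_{\tilde{G}_n}(U,1)\cap\cC_i$ as a union of three arcs of the cycle $\cC_i$, one for each column that can ``reach'' $\cC_i$, to observe that each of these arcs is connected by \pref{obs:column-connected}, and then to glue the arcs together using the hypothesis that $U$ itself is connected.

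First I would record the purely geometric fact that a tile in column $\cC_i$ is adjacent in $\tilde{G}_n$ only to tiles of $\cC_{i-1}\cup\cC_i\cup\cC_{i+1}$ (the added cylindrical edges run within a single column, and columns of index difference at least $2$ occupy disjoint horizontal strips, hence contain no adjacent tiles). Consequently, writing $A_j \seteq B_{\tilde{G}_n}(U\cap\cC_j,1)\cap\cC_i$ for $j\in\{i-1,i,i+1\}$ and using the convention $\cC_0=\cC_{b^n+1}=\emptyset$, one has
\[
   B_{\tilde{G}_n}(U,1)\cap\cC_i \;=\; A_{i-1}\cup A_i\cup A_{i+1},
\]
and moreover $U\cap\cC_i\subseteq A_i$. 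Since $U$ is vertically convex, each of $U\cap\cC_{i-1}$, $U\cap\cC_i$, $U\cap\cC_{i+1}$ is empty or connected, so \pref{obs:column-connected} (including the case of equal column indices) shows that each of $A_{i-1},A_i,A_{i+1}$ is empty or a connected arc of the cycle $\cC_i$.

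Next I would case on whether $U$ meets $\cC_i$. If $U\cap\cC_i=\emptyset$, then since $\cC_{i-1}$ and $\cC_{i+1}$ are non-adjacent, connectedness of $U$ forces $U\subseteq\cC_{i-1}$ or $U\subseteq\cC_{i+1}$, whence $B_{\tilde{G}_n}(U,1)\cap\cC_i$ equals $A_{i-1}$ or $A_{i+1}$, and we are done. If $U\cap\cC_i\neq\emptyset$ then $A_i\neq\emptyset$, and I claim that $A_{i-1}$, if nonempty, meets $A_i$: take a path in $\tilde{G}_n[U]$ from a vertex of $U\cap\cC_{i-1}$ to a vertex of $U\cap\cC_i$ and let $e$ be its first edge leaving $\cC_{i-1}$; non-adjacency of $\cC_{i-1}$ and $\cC_{i+1}$ forces the other endpoint $v$ of $e$ to lie in $U\cap\cC_i$, and then $v\in A_{i-1}\cap A_i$. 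The symmetric argument handles $A_{i+1}$. Finally, a union of arcs of a cycle each of which is empty or meets the fixed nonempty connected arc $A_i$ is connected (glue $A_{i-1}$ and $A_{i+1}$ onto $A_i$ one at a time, each along a nonempty intersection), which finishes the proof.

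The one point that needs care — and the only place the geometry of the tiling really enters — is the gluing step: connectedness of $U$ must be leveraged to produce an \emph{edge}, not merely a path, between $U\cap\cC_{i-1}$ and $U\cap\cC_i$ (and likewise on the $i+1$ side), and this rests on column $\cC_{i-1}$ having no neighbors in column $\cC_{i+1}$. I do not anticipate a genuine obstacle; the rest is routine bookkeeping about arcs of a cycle together with the boundary cases $i=1$ and $i=b^n$, which the convention $\cC_0=\cC_{b^n+1}=\emptyset$ absorbs.
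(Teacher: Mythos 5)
Your proof is correct and follows essentially the same route as the paper's: decompose $B_{\tilde{G}_n}(U,1)\cap\cC_i$ into the three arcs contributed by $U\cap\cC_{i-1}$, $U\cap\cC_i$, $U\cap\cC_{i+1}$ (each connected by \pref{obs:column-connected}), dispose of the case $U\cap\cC_i=\emptyset$ separately, and otherwise glue the side arcs to the middle one using connectedness of $U$. Your ``first edge leaving the column'' argument simply makes explicit the step the paper states tersely --- that connectedness of $U$ and the non-adjacency of $\cC_{i-1}$ and $\cC_{i+1}$ force $B_{\tilde{G}_n}(U\cap\cC_{i\pm1},1)$ to meet $U\cap\cC_i$ --- so the two proofs are the same in substance.
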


\begin{proof}
	For $i' \in \{i-1, i, i+1\}$, denote $U_{i'} \seteq U \cap C_{i'}$. Clearly we have 
   \[B_{\tilde{G}_n}(U, 1) = B_{\tilde{G}_n}(U_{i-1}, 1) \cup B_{\tilde{G}_n}(U_i, 1) \cup B_{\tilde{G}_n}(U_{i+1}, 1)\,.\]
	 If $U_i = \emptyset$, then as $U$ is connected it should be that either $U = U_{i-1}$ or $U = U_{i+1}$, and in either case the claim follows by \pref{obs:column-connected}. 

    Now suppose $U_i \neq \emptyset$, and consider some $i' \in \{i-1, i+1\}$ such that $U_{i'} \neq \emptyset$.
	Then \pref{obs:column-connected} implies that $B_{\tilde{G}_n}(U_{i'}, 1) \cap \cC_i$ is connected.
   Furthermore, since $U$ is connected, 
   it holds that $B_{\tilde{G}_n}(U_{i'}, 1) \cap U_i \neq \emptyset$. Now, as we know 
   that $U_i$ is connected by the assumed vertical convexity of $U$,
   we obtain that $B_{\tilde{G}_n}(U, 1) \cap \cC_i$ is connected, completing the proof.
\end{proof}

\begin{proof}[Proof of \pref{lem:simply-connected}]
   We proceed by induction on $R$, where the base case $R=0$ is trivial, so
   consider $R \geq 1$. 
   Fix some $i \in \{1,2,\ldots,b^n\}$, and suppose $B_{\tilde{G}_n}(A, R) \cap \cC_i \neq \emptyset$. Denote \[U \seteq B_{\tilde{G}_n}(A, R - 1) \cap (\cC_{i-1} \cup \cC_i \cup \cC_{i+1})\,.\]
   Clearly we have $B_{\tilde{G}_n}(A, R) \cap \cC_i = B_{\tilde{G}_n}(U, 1) \cap \cC_i$.
   The set $U$ is manifestly connected and, by the induction hypothesis, is also vertically convex.
   Thus from \pref{lem:column-simply-connected}, it follows that $B_{\tilde{G}_n}(U, 1) \cap \cC_i$ is connected as well. 
   We conclude that $B_{\tilde{G}_n}(A, R)$ is vertically convex, completing the proof.
\end{proof}

\subsection*{Acknowledgements}

We thank Shayan Oveis Gharan and Austin Stromme for many useful preliminary
discussions, Omer Angel and Asaf Nachmias
for sharing with us their construction of a graph with asymptotic $(3-\e)$-dimensional volume growth
on which the random walk has diffusive speed,
and Itai Benjamini for emphasizing many of the questions addressed here.
We also thank the anonymous referees for very useful comments.
This research was partially supported by NSF CCF-1616297 and a Simons Investigator Award.

\bibliographystyle{alpha}
\bibliography{diffusive}

\end{document}